\newtheorem{theorem}{Theorem}[section]
\newtheorem{lemma}[theorem]{Lemma}
\newtheorem{example}[theorem]{Example}
\newtheorem{prop}[theorem]{Proposition}
\newtheorem{remark}[theorem]{Remark}
\newtheorem{cor}{Corollary}
\newtheorem{problem}{Problem}
\begin{document}
\thispagestyle{empty}

\title{On residually nilpotence of  group extensions}
%\footnotetext{≥абота вvполнена при финансовой поддержке ≥LLL (проект 19-01-00569).}

\author{{V. G. Bardakov, M. V. Neshchadim, and O. V. Bryukhanov}}%

\maketitle {\small}

\begin{abstract}
We study the following question:
under what conditions extension of one residually nilpotent group by another
residually nilpotent group is residually nilpotent?
We prove  some sufficient conditions under which this extension  is residually nilpotent.
Also, we  study this question for semi-direct
products and,
in particular, for extensions of free group by infinite cyclic group: $F_n \rtimes_{\varphi} \mathbb{Z}$. We find conditions under which this group is residually nilpotent,
find conditions under which this group has long lower central series. In particular, we prove that for $n=2$
the length of the lower central series of $F_n \rtimes_{\varphi} \mathbb{Z}$ is equal to 2, $\omega$ or $\omega^2$.

{\it Keywords}:
Residually nilpotent group, group extension, semi-direct product, free-by-infinite cyclic group.
%\end{quote}
\end{abstract}

%\large

\tableofcontents

\section{Introduction}

A group $G$ is said to be a {\it residually nilpotent} if, for every non-identity element $x \in G$, there exists a normal subgroup $N$, depending on $x$, such that $x \not\in N$ and the quotient group $G / N$ is nilpotent (see, for example \cite{Mal-1}).
One of the first results on residually nilpotent groups belongs to Magnus who proved that  a free group
 is residually nilpotent (a simple proof of this fact can be found
in \cite[Theorem 14.2]{KM}). Malcev \cite{Mal-1} studied the conditions under which
 the free product of residually nilpotent groups to be residually nilpotent.
In particular, he established that the  free product of nilpotent groups is residually nilpotent if and only if all these groups do not contain elements of infinite $p$-height for some prime $p$, or all these groups are groups without torsion. Recall that an element $g \in G$ is called an {\it element of infinite $p$-height},  if, given any two natural number $i$ and $j$, there exist elements $x\in G$ , $y \in \gamma_i(G)$ such that $x^{p^j} = g y$. From this result follows that the groups $\mathbb{Z} * \mathbb{Z}_p$, $\mathbb{Z}_{p^n} * \mathbb{Z}_{p^m}$ are residually nilpotent. On the other side,  $\mathbb{Z}_2 * \mathbb{Z}_3$ is not residually nilpotent.

No general approaches are available by now even to solving the
problem of residual nilpotency for one-relator groups: only partial
results are available.  Baumslag \cite{Bau} studied
in  one special class of one-relator groups.
Loginova obtained in \cite{L} a similar result for a larger class of
groups. The paper \cite{McC} gives a characterization of residually
nilpotent one-relator groups with non-trivial
center. Mikhailov \cite{M} constructed one-relator group with the  lower
central series of length $\omega^2$. This answers a problem of  Baumslag.
It is established in \cite{M-1} that a central extension of one-relator
groups is residually nilpotent if and
only if so is the original group.

Malcev proved \cite{Mal-2} (also see \cite[Theorem 51.2.1]{Mer})
that if $G$ is a finitely generated subgroup of
$\mathrm{GL}_n(F)$ where $F$ is some field of characteristic 0,
then $G$ contains some subgroup of finite index which is residually
$p$-finite for almost all prime $p$.
In \cite{BM} was  established a sufficient condition for
the residual $p$-finiteness of finitely generated
subgroups of $\mathrm{GL}_n(F)$.
As a corollary to this result was established
 the residual 2-finiteness of some link groups, including those of the Whitehead link and
Borromean links. On the other side, there exists a 2-component 2-bridge link whose group is not residually nilpotent.
Also in \cite{BM} it was proved that each
link is a sublink of some link whose group is residually 2-finite. Residually nilpotence of groups of virtual knots is studied in \cite{BMN}--\cite{BNN}.
Residually nilpotence of the fundamental groups of some 3-manifolds is studied in \cite{Br}.

Linear properties of group extensions are studied in \cite{BBr, Br-1}
Semi-direct product is a particular case of HNN-extension. Residually nilpotence of HNN-extensions is studied in \cite{Mol1} and in particular, for Baumslag-Soliter groups and in generalized Baumslag-Solitar groups in \cite{Mol2, Mol3, BN-4, S}.

Survey on residually nilpotent groups can be found in the book of Mikhailov-Passi \cite{MP}.

Gruenberg \cite{G} introduced root classes of groups and
proved:
If $\mathcal{P}$ is a root class and $H$ is a normal subgroup
of $G$ such that $G / H$ lies in $\mathcal{P}$ and $H$ is residually $\mathcal{P}$,
then $G$ is residually $\mathcal{P}$.
The following classes of groups are root classes:
solvabel groups, finite groups, class of finite $p$-groups for some  prime $p$.
Class of nilpotent groups  is not a root class.

In the present paper we
 study the following question:
under what conditions extension of one residually nilpotent group by another
residually nilpotent group is residually nilpotent? In other words,  if
\begin{equation} \label{ses}
1 \to A \to G \to B \to 1
\end{equation}
is a short exact sequence in which $A$ and $B$ are residually nilpotent, under what conditions $G$ is residually nilpotent?

The residually nilpotence  groups of the form $\mathbb{Z}^n \rtimes_{\varphi} \mathbb{Z}$, $ \varphi\in\mathrm{Aut}(\mathbb{Z}^{n})$,
are studied by  Aschen\-bren\-ner and   Friedl \cite{AF}.
They have found a criteria of residually nilpotence and residually $p$-nilpotence for groups of this  type.

The paper is organized as follows.

Section \ref{s2} collects some basic definitions: the  lower central series, the lower $p$-series, root class. Also,  we formulate some well known facts about commutator subgroups and their products, result of Gruenberg.
We recall the result  of Aschen\-bren\-ner and   Friedl \cite{AF}
on the  residually nilpotence and residually $p$-nilpotence for groups of the form
$\mathbb{Z}^{n}\rtimes_{\varphi}\mathbb{Z}, \varphi\in\mathrm{Aut}(\mathbb{Z}^{n})$.

In Section \ref{s3} we study general extension of one group by another one. We prove  some sufficient conditions under which $G$ in (\ref{ses}) is residually nilpotent.
In $G$ the subgroup $B$ acts on $A$. Then we can consider the abelianization $\overline{A} = A / A'$ as left $\mathbb{Z}[B]$-module.
We prove that if $A$ is residually nilpotent, $B$ is nilpotent and $\overline{A}$ is a nilpotent $\mathbb{Z}[B]$-module, i.e.
 $\Delta_B^N \overline{A} = 0$ for some natural $N$, then $G$ is residually nilpotent (see Theorem \ref{t3.1}).

Similar theorem can be proved if we take a residually $p$-nilpotent group $A$ instead residually nilpotent, where $p$ is a prime number. If $A$ is residually $p$-nilpotent group,
$B$ is nilpotent and for some $N \geq 1$ the inclusion $\Delta_B^N \overline{A} \subseteq p \overline{A}$ holds, then $G$ is residually nilpotent (see Theorem \ref{t3.3}).

As was proved by Falk and Randel  \cite{FR1}, if $A$ and $B$ are residually nilpotent groups,
$G = A \rtimes_{\varphi} B$ is a semi-direct product, where the action of $B$ on $A$ is trivial
by modulo of the commutator subgroup $A' = \gamma_2 (A)$, then
 $G$ is residually nilpotent.

In Section \ref{s4} we  prove the following  generalization of this result.
If $A$ is residually $p$-nilpotent for some prime number $p$, $B$
is residually nilpotent, and $G = A \rtimes_{\varphi} B$,
where the action of $B$ on $A$ is trivial by modulo  $\gamma_2^{(p)} (A)$, then
$$
\gamma_k(G) = \gamma^{(p)}_k(A) \gamma_k(B),~~k \geq 1,
$$
and $G$ is residually nilpotent
(see Theorem \ref{t5.1}).

In \cite{GM} for a semi-direct product $G = A \rtimes_{\varphi} B$ was given some description of the terms of the lower central series.
In Section \ref{s4} we give simpler description of $\gamma_2(G)$  and $\gamma_2(G)$.
Also, in \cite{GM} was found  the lower central series of the pure braid group $P_2(\mathbb{K})$ of the  Klein bottle
 $\mathbb{K}$. Using our approach we prove
that $P_2 (\mathbb{K})$ is residually nilpotent.

In Section \ref{FnZ} we consider  free-by-infinite cyclic group $G=F_n \rtimes_{\varphi} \mathbb{Z}$.
 Baumslag \cite{Bau-1} proved  that groups of this type are residually finite.
The situation with residually nilpotence is more complicated. There exist finitely generated cyclic extensions of a free group which is not residually nilpotent. Moreover, Mikhailov \cite{M}, answering  on a question of Baumslag,  constructed a cyclic extension of the free 2-generated group which has the length of the lower central series equal to $\omega^2$.

We define two families of groups:
$\overline{G}_k= (F^{ab}_{n})^{\otimes k}\rtimes_{\overline{\varphi}_{k}}\mathbb{Z}$ and $\widehat{G}_k= \gamma_{i}(F_{n})/\gamma_{i+1}(F_{n})\rtimes_{\widehat{\varphi}_{i}}\mathbb{Z}$, $k = 1, 2, \ldots$, and prove that if
 $\overline{G}_k$ is residually nilpotent, then $\widehat{G}_k$ is residually nilpotent;
if $\overline{G}_k$ is residually $p$-finite, then $\widehat{G}_k$ is residually $p$-finite.

We find conditions under which $G$ has the short lower central series.
In Proposition \ref{FnZn} we prove that $\gamma_{2}(G) =\gamma_{\omega}(G) = F_n$ if and only if the matrix $[\overline{\varphi}] - E$ lies in $\mathrm{GL}_n(\mathbb{Z}).$ In particular, in this case, $G$ is not residually nilpotent.

Further, we describe conditions under which $G$ has the long lower central series and give a full prove of the following theorem (see Theorem \ref{Mres}):
If   $G$ is not residually nilpotent and  all groups  $\overline{G}_k$,   $k\geq1$,
are residually nilpotent,  then the length of the  lower central series of $G$ is equal to  $\omega^{2}$.

 This theorem was formulated in \cite{M} without proof.

In Section \ref{pFnZ} we study residually $p$-finiteness of group $G = F_{n}\rtimes_{\varphi}\mathbb{Z}$. The main result of this section is Theorem
\ref{Fpres}:
If all groups $\widehat{G}_i, i\geq1,$
are residually    $p$-finite, then  $G$ is residually    $p$-finite.

As corollary of this theorem we get Corollary \ref{Npres}:
Let $G = N_{n,c}\rtimes_{\varphi}\mathbb{Z}$, where $N_{n,c}$ be a free  nilpotent group of rank $n$ and class $c$.
If all groups
$$
\gamma_{i}(N_{n,c})/\gamma_{i+1}(N_{n,c})\rtimes_{\widehat{\varphi}_{i}}\mathbb{Z}, ~~~1\leq i\leq c,
$$
are residually  $p$-finite, then  $G$ is residually  $p$-finite. Here  $\widehat{\varphi}_{i}$ is the automorphism of  $\mathbb{Z}$-module
$\gamma_{i}(N_{n,c})/\gamma_{i+1}(N_{n,c})$ that is induced by  $\varphi$.

As was proved in \cite{M}, the residually nilpotence of $\overline{G}_k$, $k \geq 1$, does not imply the residually nilpotence  of $G = F_{n}\rtimes_{\varphi}\mathbb{Z}$.
Using  Theorem \ref{Fpres} it is possible to prove that the residually $p$-finiteness of $\overline{G}_k$, $k \geq 1$, implies  the residually $p$-finiteness  of
$G = F_{n}\rtimes_{\varphi}\mathbb{Z}$.

The following theorem (see Theorem \ref{Fdres1}) gives a simple way to prove the residually nilpotence of free-by-infinite cyclic groups.
Let $G = F_{n}\rtimes_{\varphi}\mathbb{Z}$, $\varphi\in\mathrm{Aut}(F_{n})$ and $[\overline{\varphi}]\in\mathrm{GL}_{n}(\mathbb{Z})$. If all eigenvalues of  $[\overline{\varphi}]$ are integers, then  $G$ is residually nilpotent.

Section \ref{s8} is dedicated to the groups of the form $G = F_2 {\rtimes}_\varphi \mathbb{Z}$. Using results of Sections  \ref{FnZ} and \ref{pFnZ} we prove a
criteria of the residually nilpotence of $G$ (see Theorem \ref{NAF2Z}).
Also, we prove in Theorem \ref{F2Zc} that
for the lower central series of $G = F_2 {\rtimes}_\varphi \mathbb{Z}$, $\varphi\in\mathrm{Aut}(F_{2})$, there exist only three possibilities:
\begin{itemize}
\item[a)]  $\gamma_{\omega}(G)=\gamma_{2}(G)$;
\item[b)]  $\gamma_{\omega}(G)=1$;
\item[c)]  $\gamma_{\omega^{2}}(G)=1$ and  the length of the lower central series is equal to $\omega^{2}$.
\end{itemize}

We prove  a criterion (see Theorem \ref{F2Zr1}) of residually nilpotence of  $G = F_2 {\rtimes}_\varphi \mathbb{Z}$ with $\det[\overline{\varphi}] = 1$.
In this case    $G$  is residually nilpotent if and only if  $\mathrm{tr}[\overline{\varphi}]\not\in \{1, 3 \}$.

As we seen, not any group of the form $F_{n} {\rtimes}_\varphi \mathbb{Z}$ is residually nilpotent. On the other side, Azarov \cite{Azar}  proved that any semi-direct products of a finitely generated  residually $p$-finite group by a residually $p$-finite group is virtually residually $p$-finite and hence, contains residually nilpotent subgroups of finite index. From this result follows that any group of the form  $F_{n} {\rtimes}_\varphi \mathbb{Z}$ is virtually residually nilpotent. Also, we prove that $F_2 {\rtimes}_\varphi \mathbb{Z}$
contains residually nilpotent subgroup of index 2 or 4.

\section{Preliminaries} \label{s2}

The following definitions  can be found in the paper of Malcev  \cite{Mal}.
Let $\mathfrak{C}$ be some class of groups. A group   $G$ is said
to be {\it  residually  $\mathfrak{C}$-group}
 or simply  $\mathfrak{C}$--{\it residual}, if for any non-identity  element
 $g \in G$ there exists a homomorphism  $\varphi$ of  $G$ to some group
 from  $\mathfrak{C}$ such that  $\varphi(g) \not= 1$. If
$\mathfrak{C}$ is the class of all finite groups, then  $G$ is called
{\it residually finite}.  If $\mathfrak{C}$ is the class of finite
$p$-groups, then $G$ is said to be {\it residually $p$-finite}.
If $\mathfrak{C}$ is the class of nilpotent groups, then   $G$ is  said to be
 {\it residually nilpotent}. If $\mathfrak{C}$ is the class of
 torsion-free nilpotent groups, then   $G$ is  said to be
 {\it torsion free residually nilpotent}. If $\mathfrak{C}$ is the class of
 nilpotent $p$--groups, then   $G$ is  said to be
 {\it  residually $p$-nilpotent}.

Let $G$ be a group  and let $x_1, x_2, \ldots$ be elements of $G$.
A {\it simple commutator} $[x_1, x_2, \ldots, x_n]$ of weight $n \geq 1$ is defined
inductively by setting
$$
[x_1] = x_1,~~[x_1, x_2] =x_1^{-1} x_2^{-1} x_1 x_2~\mbox{and}~[x_1, x_2, \ldots, x_n] = [[x_1, x_2, \ldots, x_{n-1}], x_n]~\mbox{for}~n \geq 3.
$$
Let  $A, B \subseteq G$ be two subsets of a group $G$,
then by  $[A, B]$ we shall denote the subgroup which is generated by all commutators
 $[a, b]$, $a \in A$, $b \in B$. Suppose that $P$ and $Q$ are subgroups of $G$.
 The subgroup of $G$ that is generated by left-normalized commutators $[p,q_1,\ldots,q_k]$,
 where $p\in P$, $q_1,\ldots,q_k \in Q$ we shall denote by $[P,{}_{k}Q]$.

 We will use the following commutator identities
 \begin{equation}
 [x, y]^{-1} = [y, x];
  \end{equation}
  \begin{equation}
 [x^{-1}, y] = [y, x]^{x^{-1}};
  \end{equation}
   \begin{equation}
 [x, y^{-1}] = [y, x]^{y^{-1}};
  \end{equation}
   \begin{equation}
 [x y, z] = [x, z]^{y} [y, z] = [x, z] [x, z, y]  [y, z];
  \end{equation}
  \begin{equation} \label{f1}
[x, yz] = [x, z] [x, y]^z=[x, z] [x, y] [x, y, z];
\end{equation}
\begin{equation} \label{f2}
[[x, y], z^x] [[z, x], y^z] [[y, z], x^y]  = 1.
\end{equation}
which hold for any elements $x, y, z$ of arbitrary group.

For a group $G$ define its transfinite lower central series
$$
G = \gamma_1 (G) \geq\gamma_2 (G) \geq \ldots \geq\gamma_{\omega} (G) \geq\gamma_{\omega+1} (G) \geq \ldots,
$$
where
$$
\gamma_{\alpha+1} (G) = \langle [g_{\alpha}, g] ~|~g_{\alpha} \in \gamma_{\alpha} (G), g \in G \rangle
$$
and if $\alpha$ is a limit ordinal, then
$$
\gamma_{\alpha} (G) = \bigcap_{\beta < \alpha}\gamma_{\beta} (G).
$$
The group $G$ is said to be {\it transfinitely nilpotent} if $\gamma_{\alpha} (G) =1$
for some ordinal $\alpha$, or simply {\it nilpotent} if $\alpha$ is a finite ordinal.
In particular, $G$ is residually nilpotent if and only if
$$
\gamma_{\omega}(G) = \bigcap_{i=1}^{\infty} \gamma_{i}(G) = 1.
$$
The smallest ordinal  $\alpha$ such that $\gamma_{\alpha} (G)
= \gamma_{\alpha+1} (G)$ is called the {\it length of the lower central series} of $G$.

Also, we can use the following lemma.

\begin{lemma}[Lemma 2.8.8 \cite{Kh}]\label{KhC}
If $A, B$ are normal subgroups of some group $G$, then the following inclusion holds
$$
 [A, \gamma_{k}(B)] \leq [A, \underbrace{B,\dots , B}_{k}], ~~~k\geq 1.
$$
\end{lemma}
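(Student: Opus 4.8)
\emph{Proof proposal.} The plan is to prove the inclusion by induction on $k$, using the three subgroups lemma (which is a direct consequence of the Hall--Witt identity (\ref{f2})) to carry out the inductive step. For $k=1$ there is nothing to prove, since $\gamma_1(B)=B$ gives $[A,\gamma_1(B)]=[A,B]=[A,{}_1B]$. For the induction it is convenient to keep the normal subgroup $B$ fixed and, for each $k$, to regard the statement as quantified over \emph{all} normal subgroups $A\trianglelefteq G$: this extra freedom in the choice of $A$ is precisely what the inductive step will use.

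So assume that $[A_0,\gamma_k(B)]\leq[A_0,{}_kB]$ holds for every normal subgroup $A_0\trianglelefteq G$, and fix a normal subgroup $A$. Since $\gamma_{k+1}(B)=[\gamma_k(B),B]$ and $[X,Y]=[Y,X]$ as subgroups, the target inclusion $[A,\gamma_{k+1}(B)]\leq[A,{}_{k+1}B]$ is the same as
$$
[\gamma_k(B),B,A]\leq N,\qquad\text{where}\quad N:=[A,\underbrace{B,\ldots,B}_{k+1}].
$$
I would then apply the three subgroups lemma with $H=\gamma_k(B)$, $K=B$, $L=A$ and with the normal subgroup $N$ (note that $N$ is normal in $G$ because $A$ and $B$ are): it reduces $[H,K,L]\leq N$ to the two inclusions $[K,L,H]\leq N$ and $[L,H,K]\leq N$.

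Both of these follow from the induction hypothesis. For $[L,H,K]=[[A,\gamma_k(B)],B]$, the hypothesis applied to the pair $(A,B)$ gives $[A,\gamma_k(B)]\leq[A,{}_kB]$, whence $[[A,\gamma_k(B)],B]\leq[[A,{}_kB],B]=[A,{}_{k+1}B]=N$. For $[K,L,H]=[[B,A],\gamma_k(B)]$, one uses that $[B,A]=[A,B]$ as subgroups and that $[A,B]$ is normal in $G$; applying the hypothesis to the pair $([A,B],B)$ yields $[[A,B],\gamma_k(B)]\leq[[A,B],{}_kB]=[A,{}_{k+1}B]=N$. Hence $[\gamma_k(B),B,A]\leq N$, which is exactly $[A,\gamma_{k+1}(B)]\leq[A,{}_{k+1}B]$, and the induction is complete.

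The only genuinely routine work is the commutator bookkeeping behind the two identifications $[[A,{}_kB],B]=[A,{}_{k+1}B]$ and $[[A,B],{}_kB]=[A,{}_{k+1}B]$ --- i.e.\ that the subgroup generated by left-normalized commutators of a prescribed shape coincides with the corresponding iterated commutator subgroup --- together with the normality of $[A,B]$ and of $[A,{}_{k+1}B]$ in $G$. These are standard consequences of the commutator collection identities recorded above (such as $[xy,z]=[x,z]^y[y,z]$ and (\ref{f1})), so I expect this to be the only place where one must actually compute, but it presents no conceptual obstacle.
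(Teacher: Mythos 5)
Your proof is correct: the paper gives no argument of its own for this lemma, quoting it directly from Khukhro \cite{Kh} (Lemma 2.8.8), and your induction on $k$ --- strengthened to range over all normal subgroups $A$, with the inductive step done by the three subgroups lemma (Hall--Witt) applied to $H=\gamma_k(B)$, $K=B$, $L=A$ and the normal subgroup $N=[A,{}_{k+1}B]$ --- is exactly the standard textbook argument. The bookkeeping you defer ($[[A,{}_kB],B]=[A,{}_{k+1}B]$, $[[A,B],{}_kB]=[A,{}_{k+1}B]$, and normality of $[A,B]$ and $[A,{}_{k+1}B]$ in $G$) is indeed routine, since each $[A,{}_jB]$ is generated by a conjugation-invariant set of commutators and the expansions follow from the identities $[xy,z]=[x,z]^y[y,z]$ and $[x^{-1},y]=[y,x]^{x^{-1}}$.
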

The next inclusions follow from this lemma.
\begin{equation} \label{gsum}
[\gamma_{i}(G), \gamma_{j}(G)] \leq \gamma_{i+j}(G).
\end{equation}

\begin{equation} \label{iw}
 \gamma_{i}(\gamma_{\omega}(G)) \leq \gamma_{i\omega}(G).
\end{equation}

We say that $G$ is {\it virtually residually nilpotent},
if it contains a residually nilpotent subgroup of finite index.

The lower $p$-series of $G$ is defined as follows:
$$
\gamma^{(p)}_1(G)=G,~~  \gamma^{(p)}_2(G)=[G, \gamma^{(p)}_1(G)](\gamma^{(p)}_1(G))^p=[G,G]G^p,
$$
and recursively  $\gamma^{(p)}_{n+1}(G)=[G, \gamma^{(p)}_n(G)](\gamma^{(p)}_n(G))^p$
for every integer   $n \geq 1$.
Here $m$, $p \in \mathbb{N}$, $p\geq 2$, $(\gamma_{s}G)^{p^{k}}$
is the subgroup which is generated by
$p^{k}$-th powers of elements in   $\gamma_{s} G$.

It is easy to see that
$$
\gamma_{m+1}^{(p)} G= G^{p^m} (\gamma_{2}G)^{p^{m-1}}\cdot\ldots\cdot (\gamma_{m}G)^{p} \gamma_{m+1} G.
$$
Note that $\gamma_{m+1}^{(p)} G$ is a characteristic subgroup of  $G$ and
$$
[\gamma^{(p)}_k(G), \gamma^{(p)}_l(G)]\leq  \gamma^{(p)}_{k+l}(G).
$$
Group $G$ is said to be  {\it residually $p$-nilpotent}, if
$$
\gamma^{(p)}_{\omega}(G) = \bigcap_{i=1}^{\infty} \gamma^{(p)}_{i}(G) = 1.
$$
It is evident that finitely generated residually $p$-nilpotent group is residually $p$-finite.

\medskip

 Gruenberg \cite{G} called a class of groups  $\mathcal{P}$  by {\it root class}
if it satisfying the following three conditions:

(1) if a group lies in  $\mathcal{P}$, then so also is every subgroup;

(2) if $G$ and $H$ lie in  $\mathcal{P}$, then so also  $G \times H$;

(3) if $ G \geq H \geq K \geq 1$ is a series of subgroups,
each normal in its predecessor, and $G / H$, $H / K$, $K$ lie in  $\mathcal{P}$,
then $G$ is residually $\mathcal{P}$.

The following classes of groups are root classes:
solvabel groups, finite groups, class of finite $p$-groups for some  prime $p$.
Class of nilpotent groups fails to satisfy (3) and hence is not a root class.

For root classes  Gruenberg proved

\begin{prop} (\cite[Lemma 1.5]{G})
If $\mathcal{P}$ is a root class and $A$ is a normal subgroup
of $G$ such that $G / A$ lies in $\mathcal{P}$ and $A$ is residually $\mathcal{P}$,
then $G$ is residually $\mathcal{P}$.
\end{prop}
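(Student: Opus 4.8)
The plan is to check the residual $\mathcal{P}$-property of $G$ one non-identity element at a time. Fix $1 \neq g \in G$. If $g \notin A$, then the quotient map $G \to G/A$ already separates $g$, since $G/A \in \mathcal{P}$ and $gA \neq 1$. So the whole content lies in the case $g \in A$: here I would use that $A$ is residually $\mathcal{P}$ to pick a homomorphism $A \to P_{0}$ with $P_{0} \in \mathcal{P}$ not killing $g$, and set $M = \ker(A \to P_{0})$, a normal subgroup of $A$ with $A/M$ embedding into $P_{0}$, hence $A/M \in \mathcal{P}$ by root-class axiom (1), and with $g \notin M$. The one obstruction to concluding at once is that $M$ is normal in $A$ but not, in general, in $G$.

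To remedy this I would pass to the normal core $M^{*} = \bigcap_{x \in G} M^{x}$, a normal subgroup of $G$ contained in $M$ (so still $g \notin M^{*}$). Conjugation supplies isomorphisms $A/M^{x} \cong A/M \in \mathcal{P}$, and $A/M^{*}$ embeds as a subdirect product into $\prod_{x} A/M^{x}$, so $A/M^{*}$ is again residually $\mathcal{P}$, while $(G/M^{*})/(A/M^{*}) \cong G/A \in \mathcal{P}$. Taken naively this is circular — it reproduces the original situation inside $G/M^{*}$ — so the real point is to extract a $\mathcal{P}$-quotient. The engine is the special case ``a group with a normal $\mathcal{P}$-subgroup of $\mathcal{P}$-quotient is residually $\mathcal{P}$'', which is exactly axiom (3) applied to a three-term series with trivial last term. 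When only finitely many distinct conjugates $M^{x}$ occur — which happens in particular whenever $[G:A] < \infty$, hence for $\mathcal{P}$ the class of finite groups or of finite $p$-groups — the subdirect product is finite, so $A/M^{*} \in \mathcal{P}$ by axioms (1)--(2), whence $G/M^{*}$ is an extension of a $\mathcal{P}$-group by a $\mathcal{P}$-group and is residually $\mathcal{P}$ by axiom (3); since $gM^{*} \neq 1$ this yields a $\mathcal{P}$-quotient of $G$ not killing $g$, and we are done. In general I would instead work inside the normalizer $H = N_{G}(M) \supseteq A$: since $H/A \leq G/A \in \mathcal{P}$ we get $H/A \in \mathcal{P}$ by axiom (1), so axiom (3) applied to the series $H/M \geq A/M \geq 1$ shows $H/M$ is residually $\mathcal{P}$, and as $gM \neq 1$ in $H/M$ this produces a homomorphism from $H$ onto a $\mathcal{P}$-group that is nontrivial on $g$.

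The step I expect to be the genuine obstacle is then transferring such a homomorphism from $H = N_{G}(M)$ back to an honest $\mathcal{P}$-quotient of the whole group $G$ when $[G:H]$ is infinite. The standard device is an induced-homomorphism (wreath-product) construction built from the action of $G$ on the cosets of $H$ — an action which factors through $G/A \in \mathcal{P}$ — together with the $\mathcal{P}$-quotient of $H$ already obtained, schematically a map into $P \wr (G/A)$; but making this precise with infinitely many cosets forces a limiting argument in which one controls only finitely many coordinates at a time and then intersects kernels. This convergence point is exactly where the full force of the three root-class axioms is used, and it is the core of Gruenberg's argument; for the classes actually needed later in the paper (finite and finite $p$-groups, where $[G:A]$ is automatically finite) the argument of the previous paragraph already suffices.
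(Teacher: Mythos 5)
The paper itself gives no argument here --- the proposition is quoted directly from Gruenberg --- so the only question is whether your proof stands on its own, and it does not quite. Your reduction to the case $g \in A$, and your treatment of the case in which $M$ has only finitely many $G$-conjugates (in particular $[G:A]<\infty$, which covers the root classes of finite groups and of finite $p$-groups), are correct: the core $M^{*}$ then satisfies $A/M^{*}\in\mathcal{P}$ by axioms (1)--(2), and axiom (3) applied to the series $G/M^{*}\geq A/M^{*}\geq 1$ separates $g$. But the proposition is asserted for an arbitrary root class, and the class of solvable groups (listed in the paper as a root class) already forces the case of infinite $[G:A]$, hence possibly infinitely many conjugates of $M$. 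For that case you only gesture at an argument: the detour through $H=N_{G}(M)$ produces a $\mathcal{P}$-quotient of $H$, not of $G$, and the induced-homomorphism/wreath-product construction you invoke to get back to $G$ is precisely the substance of Gruenberg's proof, which you explicitly leave unexecuted (``forces a limiting argument \dots the core of Gruenberg's argument''). So there is a genuine gap: as written, your proof establishes the statement only when $G/A$ is finite.

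Concretely, the missing ingredients are: (i) the embedding of $G/M^{*}$, where $M^{*}=\bigcap_{x\in G}M^{x}$, into the complete (unrestricted) wreath product of $A/M$ by $G/A$ --- a Kaloujnine--Krasner type monomial embedding, available here because $A\leq N_{G}(M)$, so the permutation action of $G$ on the conjugates of $M$ factors through $G/A$; and (ii) Gruenberg's key lemma that for a root class $\mathcal{P}$ the complete wreath product of two $\mathcal{P}$-groups is residually $\mathcal{P}$, which is exactly where axiom (3), with its bottom term required to lie in $\mathcal{P}$ rather than merely be residually $\mathcal{P}$, does real work. With (i) and (ii) one concludes as in your finite case: $G/M^{*}$ is residually $\mathcal{P}$ and $gM^{*}\neq 1$, so $g$ survives in some $\mathcal{P}$-quotient of $G$. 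Until those two steps are supplied, the normalizer detour and the appeal to a ``limiting argument'' do not close the general case.
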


Also Gruenberg proved that if  $\mathcal{P}$ is a class of solvabel groups,
or finite groups or finite $p$-groups and $F$ is a free product of groups,
each of which is residually  $\mathcal{P}$, then $F$ itself is residually  $\mathcal{P}$.

%\subsection{Groups $\mathbb{Z}^n \rtimes_{\varphi} \mathbb{Z}$}

Let $\Gamma$ be a group and $M$ is a left $\mathbb{Z}[\Gamma]$--module.
$M$ is called {\it residually nilpotent} if
$$
\bigcap_{k=1}^{\infty}  \Delta^k_{\Gamma}  M = 0,
$$
where $\Delta^k_{\Gamma}$,  is the $k$--th power of the augmentation ideal
$$
\Delta_{\Gamma} := \ker \{ \mathbb{Z}[\Gamma] \to \mathbb{Z} \}, \quad \Gamma \to 1.
$$

 In \cite{M} was proved

\begin{lemma}[Lemma 2, \cite{M}]
 Let $M$ be a finitely generated free abelian group of rank $n \geq 1$ and an
infinite cyclic group with generator $t$ acts on $M$ as a matrix $A \in GL_n(\mathbb{Z})$. Suppose
that the product of any collection of eigenvalues of $A - E$ is not $\pm 1$. Then the
module $M$ is residually nilpotent $\mathbb{Z}[\langle t \rangle]$--module.
\end{lemma}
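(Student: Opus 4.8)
The plan is to turn the statement into a purely matrix-theoretic fact and then apply a Fitting-type decomposition. Write $\Gamma=\langle t\rangle\cong\mathbb{Z}$, so $\mathbb{Z}[\Gamma]=\mathbb{Z}[t,t^{-1}]$ and the augmentation ideal $\Delta_\Gamma$ is the principal ideal generated by $t-1$; hence $\Delta_\Gamma^k=(t-1)^k\mathbb{Z}[t,t^{-1}]$ and $\Delta_\Gamma^k M=(t-1)^kM$ for every $k\geq 1$. Identifying $M$ with $\mathbb{Z}^n$ so that $t$ acts as the matrix $A$, the element $t-1$ acts as $B:=A-E$, and therefore $\Delta_\Gamma^k M=B^k\mathbb{Z}^n$. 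Thus the assertion is equivalent to
\[
N:=\bigcap_{k\geq 1}B^k\mathbb{Z}^n=0 .
\]

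Assume for contradiction that $N\neq 0$. As a subgroup of $\mathbb{Z}^n$ it is free abelian of some rank $m$ with $1\leq m\leq n$. Over $\mathbb{Q}$ take the Fitting decomposition $\mathbb{Q}^n=V_0\oplus V_1$ with $V_0=\ker(B^n)$ and $V_1=\operatorname{Im}(B^n)$: both summands are $B$-invariant, $B|_{V_0}$ is nilpotent, $B|_{V_1}$ is invertible, and $B^k\mathbb{Q}^n=V_1$ for all $k\geq n$; in particular $N\subseteq\bigcap_k B^k\mathbb{Q}^n=V_1$. The crucial step is to prove $B(N)=N$. The inclusion $B(N)\subseteq N$ is immediate. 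Conversely, fix $x\in N$; for each $k$ the relation $x\in B^{k+1}\mathbb{Z}^n=B(B^k\mathbb{Z}^n)$ yields some $z_k\in B^k\mathbb{Z}^n$ with $Bz_k=x$. Writing $z_k=z_k^{(0)}+z_k^{(1)}$ along $V_0\oplus V_1$ and using $x\in V_1$ together with the invertibility of $B|_{V_1}$, one gets $z_k^{(1)}=(B|_{V_1})^{-1}x=:y$ for every $k$; and for $k\geq n$ we have $z_k\in B^k\mathbb{Z}^n\subseteq V_1$, so $z_k=y$. Hence $y\in B^k\mathbb{Z}^n$ for all $k\geq n$, and since the chain is decreasing, $y\in B^k\mathbb{Z}^n$ for every $k\geq 1$, i.e. $y\in N$ and $x=By\in B(N)$.

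Consequently $B$ restricts to a surjective endomorphism of the finitely generated free abelian group $N$, which is therefore an automorphism of $N$; so, in a $\mathbb{Z}$-basis of $N$, the map $B|_N$ has determinant $\pm 1$. On the other hand $W:=\mathbb{Q}\otimes_{\mathbb{Z}}N$ is a $B$-invariant subspace of $\mathbb{Q}^n$ of dimension $m\geq 1$, and with respect to the same basis $B|_W$ has exactly the same matrix as $B|_N$; hence the characteristic polynomial of $B|_W$ divides that of $B=A-E$, so the eigenvalues of $B|_W$ form a non-empty collection of eigenvalues of $A-E$ whose product equals $\det(B|_W)=\det(B|_N)=\pm1$. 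This contradicts the hypothesis. Therefore $N=0$, which is precisely the statement that $M$ is a residually nilpotent $\mathbb{Z}[\langle t\rangle]$-module.

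I expect the only genuinely delicate point to be the identity $B(N)=N$: the descending chain $B^k\mathbb{Z}^n$ need not stabilize, and one must instead exploit that its ``eventual image'' over $\mathbb{Q}$, namely $V_1$, does stabilize, so that the $V_1$-components of the preimages $z_k$ are all forced to be the single vector $y$. Everything else is routine: the reduction through the augmentation ideal, the Hopfian property of $\mathbb{Z}^m$ used to upgrade surjectivity to an automorphism, and the elementary fact that the characteristic polynomial of an operator on an invariant subspace divides the characteristic polynomial of the operator.
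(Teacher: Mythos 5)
Your argument is correct. Note first that this paper does not prove the statement at all: it is quoted verbatim as Lemma~2 of Mikhailov's paper \cite{M}, so there is no in-paper proof to compare against; your write-up is in effect a self-contained replacement for the citation. The reduction to showing $N=\bigcap_{k\ge 1}(A-E)^k\mathbb{Z}^n=0$ is right (the augmentation ideal of $\mathbb{Z}[t,t^{-1}]$ is indeed principal, generated by $t-1$), and the delicate point you flag is handled correctly: since $N\subseteq V_1$ and $B|_{V_1}$ is invertible, every preimage $z_k$ of $x\in N$ has the same $V_1$-component $y=(B|_{V_1})^{-1}x$, and for $k\ge n$ one has $z_k\in B^k\mathbb{Q}^n=V_1$, forcing $z_k=y\in B^k\mathbb{Z}^n$; the decreasing chain then gives $y\in N$ and hence $B(N)=N$. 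From there the Hopfian property of $\mathbb{Z}^m$ gives $\det(B|_N)=\pm1$, the characteristic polynomial of $B$ restricted to $\mathbb{Q}\otimes N$ divides that of $A-E$, and the product of that nonempty collection of eigenvalues equals $\det(B|_N)=\pm1$, contradicting the hypothesis. This is the natural (and essentially the standard) argument for the lemma, and I see no gap.
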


\medskip

The residually nilpotence  groups of the form $\mathbb{Z}^n \rtimes_{\varphi} \mathbb{Z}$, $\varphi\in\mathrm{Aut}(\mathbb{Z}^{n})$,
are studied by  Aschen\-bren\-ner and   Friedl \cite{AF}.
They have found a criteria of residually nilpotence and residually $p$-finite for groups of this type.
If $P_{\varphi}(x)$ is the characteristic polynomial of the matrix $[\varphi]$,  $p_{i}(x)\in\mathbb{Z}[x]$,
$i=1,\dots, s,$ are its non-reducible factors, then the following proposition holds.

\begin{prop}\label{MFcri} \hspace{10ex}
\begin{itemize}
\item[a)]  $\mathbb{Z}^{n}\rtimes_{\varphi}\mathbb{Z}$ is residually nilpotent if and only if
 $p_{i}(1)\neq\pm1$, $i=1,\dots, s.$
\item[b)]  $\mathbb{Z}^{n}\rtimes_{\varphi}\mathbb{Z}$ is residually $p$-finite
if and only if  $p_{i}(1)\in p\mathbb{Z}$, $i=1,\dots, s.$
\end{itemize}
\end{prop}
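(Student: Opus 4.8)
The plan is to compute the transfinite lower central series and the lower $p$-central series of $G=\mathbb{Z}^{n}\rtimes_{\varphi}\mathbb{Z}$ explicitly and thereby reduce both statements to purely linear-algebraic facts about the integer matrix $A:=[\varphi]\in\mathrm{GL}_{n}(\mathbb{Z})$. Write $M=\mathbb{Z}^{n}$, let $t$ generate the acting $\mathbb{Z}$, identify the action of $t$ on $M$ with $A$, and set $B:=A-E$. Since $G/M\cong\mathbb{Z}$ is abelian one has $\gamma_{2}(G)\subseteq M$; from $[m,t]=(A^{-1}-E)m$ and the $G$-invariance of $BM$ inside $M$ I get $\gamma_{2}(G)=BM$, and then by induction $\gamma_{k+1}(G)=B^{k}M$ for all $k\ge1$, so $\gamma_{\omega}(G)=\bigcap_{k\ge1}B^{k}M$ and $G$ is residually nilpotent iff this intersection is trivial. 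An entirely parallel computation (using $v^{p}=pv$ for $v\in M$, the congruence $E+A+\dots+A^{r-1}\equiv rE\pmod{(A-E)}$, and the containment $t^{p^{m}}-1\in(p,t-1)^{m+1}$ in $\mathbb{Z}[t^{\pm1}]$, the latter proved by induction from $(s-1)^{p}\equiv s^{p}-1\pmod p$) gives $\gamma^{(p)}_{m+1}(G)=(I^{m}M)\rtimes\langle t^{p^{m}}\rangle$, where $I:=(p,t-1)$ is an ideal of the Noetherian ring $R:=\mathbb{Z}[t^{\pm1}]$ and $M$ is viewed as the $R$-module on which $t$ acts as $A$; hence $\gamma^{(p)}_{\omega}(G)=\bigcap_{m\ge1}I^{m}M$, and since $G$ is finitely generated it is residually $p$-finite iff $\gamma^{(p)}_{\omega}(G)=1$. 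So a) amounts to ``$\bigcap_{k}B^{k}M=0$ iff $p_{i}(1)\ne\pm1$ for all $i$'', and b) to ``$\bigcap_{m}I^{m}M=0$ iff $p_{i}(1)\in p\mathbb{Z}$ for all $i$''.

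For a) I would first split off the part of $A$ on which $1$ is an eigenvalue: over $\mathbb{Q}$ write $\mathbb{Q}^{n}=V_{0}\oplus V_{1}$ with $V_{0}=\bigcup_{k}\ker B^{k}$ and $V_{1}=\bigcap_{k}\operatorname{im}B^{k}$; these are $B$-invariant, $B|_{V_{0}}$ is nilpotent and $B|_{V_{1}}$ is invertible. Comparing the decreasing chain $B^{k}M$ with the finite-index sublattice $(M\cap V_{0})\oplus(M\cap V_{1})$ shows $\bigcap_{k}B^{k}M=0$ iff $\bigcap_{k}B^{k}L_{1}=0$ where $L_{1}:=M\cap V_{1}$; so I may assume $B$ acts injectively on the lattice $L_{1}$, whose characteristic polynomial is $\prod q_{i}^{e_{i}}$ with $q_{i}(x):=p_{i}(x+1)$ irreducible and $q_{i}(0)=p_{i}(1)\ne0$ (the factor $p_{i}=x-1$, if it occurs, lives on $V_{0}$). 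The key lemma is then: for $B$ injective on a lattice $L$, $\bigcap_{k}B^{k}L=0$ iff every irreducible factor $q$ of the characteristic polynomial of $B|_{L}$ has $\abs{q(0)}\ge2$. If some $q$ has $q(0)=\pm1$, the $q$-primary sublattice $L_{q}\subseteq L$ is nonzero and $B$-stable with $\abs{\det(B|_{L_{q}})}$ a power of $\abs{q(0)}=1$, so $B$ restricts to an automorphism of $L_{q}$ and $B^{k}L_{q}=L_{q}\subseteq B^{k}L$ for all $k$, giving a nonzero element of the intersection. Conversely, put $S:=\bigcap_{k}B^{k}L$; injectivity of $B$ forces $BS=S$, hence $\det(B|_{S})=\pm1$ and $\chi_{B|_{S}}(0)=\pm1$, but $\chi_{B|_{S}}$ divides $\chi_{B|_{L}}$ over $\mathbb{Q}$, so its irreducible factors are among the $q_{i}$, each with $\abs{q_{i}(0)}\ge2$ — a nonempty such product cannot equal $\pm1$, so $S=0$. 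Translating back through $q_{i}(0)=p_{i}(1)$ (and noting that $p_{i}=x-1$ has $p_{i}(1)=0\ne\pm1$) yields a).

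For b) put $N:=\bigcap_{m}I^{m}M$. By the Artin--Rees lemma $N=IN$, so the determinant (Nakayama) trick produces $r\in R$ with $r\equiv1\pmod I$ and $rN=0$, i.e. $r(A)N=0$. If $N\ne0$ then, $N$ being torsion-free, $r(A)$ is singular, so some irreducible factor $p_{i}$ of $\chi_{A}$ divides $r$ over $\mathbb{Q}$, hence over $\mathbb{Z}$ after clearing a power of $t$; then $p_{i}(1)\mid r(1)$, while $r\equiv1\pmod{(p,t-1)}$ gives $r(1)\equiv1\pmod p$, so $p\nmid p_{i}(1)$. Contrapositively, if every $p_{i}(1)\in p\mathbb{Z}$ then $N=0$. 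Conversely, if $p\nmid p_{i}(1)$ for some $i$, let $L_{i}\subseteq M$ be the $p_{i}$-primary sublattice; modulo $p$ the matrix $A-E$ acts on $L_{i}/pL_{i}$ with determinant $\pm p_{i}(1)^{f_{i}}\not\equiv0\pmod p$, hence invertibly, so $IL_{i}=pL_{i}+(A-E)L_{i}=L_{i}$ and therefore $\bigcap_{m}I^{m}L_{i}=L_{i}\ne0$ is contained in $N$. This gives b).

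The routine-looking parts — the commutator bookkeeping behind the formulas for $\gamma_{k}(G)$ and $\gamma^{(p)}_{k}(G)$, in particular $t^{p^{m}}-1\in(p,t-1)^{m+1}$ — are mildly tedious but unproblematic. The genuine obstacle in a) is the lattice lemma: it is essential to control the $\mathbb{Z}$-module intersection $\bigcap_{k}B^{k}L$ rather than just its $\mathbb{Q}$-span, and the reduction isolating $V_{1}$ needs the finite index of $(M\cap V_{0})\oplus(M\cap V_{1})$ in $M$ to be handled with care; I note that the sufficiency half of a) is, after identifying the hypothesis ``$p_{i}(1)\ne\pm1$ for all $i$'' with ``no sub-product of the eigenvalues of $A-E$ equals $\pm1$'', exactly Lemma~2 of \cite{M}. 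In b) the heart is the Artin--Rees / Nakayama step. Finally one should check the small cases ($n$ arbitrary, $p_{i}=x-1$ present, $A-E$ not invertible over $\mathbb{Q}$), all of which the above arguments cover.
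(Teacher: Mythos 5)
Your argument is correct, but note that the paper itself gives no proof of Proposition \ref{MFcri}: it is imported from Aschenbrenner--Friedl \cite{AF}, so what you have produced is a self-contained elementary proof of a quoted result rather than a variant of an argument in the text. Your two reductions are sound and in fact consistent with what the paper does use: the identity $\gamma_{k+1}(\mathbb{Z}^n\rtimes_\varphi\mathbb{Z})=([\varphi]-E)^k\mathbb{Z}^n$ appears (unproved) in Lemma \ref{Pphi1} and Lemma \ref{Nilw2}, and your computation $\gamma^{(p)}_{m+1}(G)=(p,t-1)^m M\rtimes\langle t^{p^m}\rangle$, resting on $t^{p^m}-1\in(p,t-1)^{m+1}$, is the correct $p$-analogue; together with finite generation these do reduce (a) and (b) to the module statements you state. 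For (a), both halves of your lattice lemma check out: if $q(0)=\pm1$ the $q$-primary sublattice is carried onto itself by $B=A-E$ (determinant $\pm q(0)^e$), and conversely injectivity of $B$ forces $BS=S$ for $S=\bigcap_k B^kL$, whence $\det(B|_S)=\pm1$ while $\chi_{B|_S}$ is a nonempty monic product of the $q_i$, a contradiction when all $\lvert q_i(0)\rvert\ge2$. The Fitting reduction can be done more cheaply than by finite-index comparison: for $k\ge\dim V_0$ one has $B^kM\subseteq L_1$ and $B^{k+\dim V_0}M\subseteq B^kL_1\subseteq B^kM$, so the two intersections literally coincide. For (b), the Krull-intersection/Artin--Rees step giving $N=IN$, the Nakayama element $r\equiv1\pmod{(p,t-1)}$ with $r(A)N=0$, the Gauss-lemma divisibility $p_i(1)\mid r(1)$ against $r(1)\equiv1\pmod p$, and the converse via invertibility of $A-E$ on $L_i/pL_i$ when $p\nmid p_i(1)$ are all correct. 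Your side remark identifying the hypothesis of (a) with Mikhailov's Lemma 2 condition (no sub-product of eigenvalues of $A-E$ equal to $\pm1$) is true but not immediate -- it uses that a product of algebraic integers can only be $\pm1$ if each factor is a unit, hence has norm $\pm1$ -- and fortunately your proof never relies on it. What your route buys, compared with citing \cite{AF}, is an explicit description of both filtrations of $\mathbb{Z}^n\rtimes_\varphi\mathbb{Z}$ and a purely linear-algebraic criterion, which also recovers Lemma \ref{Pphi1} and the statements used in Section \ref{FnZ} at no extra cost.
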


Further we shall use the following claim.

\begin{lemma}\label{GpZ}
If a group $\mathbb{Z}^{n}\rtimes_{\varphi}\mathbb{Z}$ is residually  $p$-finite,
then for any non-trivial element $g\in\mathbb{Z}^{n}\rtimes_{\varphi}\mathbb{Z}$
there exists a  nilpotent group  $G_{p}\rtimes_{\widehat{\varphi}}\mathbb{Z}$,
where $G_{p}$ is a finite abelian  $p$-group, which is a quotient of  $\mathbb{Z}^{n}$,
$\widehat{\varphi}\in\mathrm{Aut}(G_{p})$ is induced by $\varphi$ and the image of $g$
under the homomorphism
$\mathbb{Z}^{n}\rtimes_{\varphi}\mathbb{Z} \to G_{p}\rtimes_{\widehat{\varphi}}\mathbb{Z}$
is non-trivial.
\end{lemma}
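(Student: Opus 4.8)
The plan is to exploit the fact that residual $p$-finiteness of $\Gamma := \mathbb{Z}^n \rtimes_\varphi \mathbb{Z}$ means precisely $\bigcap_{i\geq 1} \gamma^{(p)}_i(\Gamma) = 1$, and to convert a single quotient of $\Gamma$ by some $\gamma^{(p)}_m(\Gamma)$ (which separates the chosen $g$) into a quotient of the desired shape $G_p \rtimes_{\widehat\varphi} \mathbb{Z}$ with $G_p$ a finite abelian $p$-group. First I would fix a nontrivial $g \in \Gamma$. By residual $p$-finiteness there is $m \geq 1$ with $g \notin \gamma^{(p)}_m(\Gamma)$, so $\Gamma / \gamma^{(p)}_m(\Gamma)$ is a finite $p$-group separating $g$. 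This finite $p$-group is not yet of the required form, so the main work is to produce a characteristic subgroup $K \leq \mathbb{Z}^n$ such that $K$ is $\varphi$-invariant, $\mathbb{Z}^n / K$ is a finite abelian $p$-group, the induced group $(\mathbb{Z}^n / K) \rtimes_{\widehat\varphi} \mathbb{Z}$ is nilpotent, and the image of $g$ stays nontrivial.

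The natural candidate is $K = \pi\bigl(\gamma^{(p)}_M(\Gamma)\bigr) \cap \mathbb{Z}^n$ for a suitably large $M \geq m$, where $\pi\colon \Gamma \to \mathbb{Z}$ is the projection; more concretely, set $K = \gamma^{(p)}_M(\Gamma) \cap \mathbb{Z}^n$. Since $\gamma^{(p)}_M(\Gamma)$ is a characteristic subgroup of $\Gamma$ and $\mathbb{Z}^n$ is characteristic (it is the commutator-closure of the torsion-free part, or equivalently the unique maximal normal subgroup on which $\mathbb{Z}$ acts with the given module structure), the intersection $K$ is characteristic in $\mathbb{Z}^n$, in particular $\varphi$-invariant. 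The quotient $\mathbb{Z}^n / K$ embeds into the finite $p$-group $\Gamma / \gamma^{(p)}_M(\Gamma)$, hence is a finite abelian $p$-group; call it $G_p$, with $\widehat\varphi \in \mathrm{Aut}(G_p)$ the induced automorphism. The key point then is that the natural surjection $\Gamma \to G_p \rtimes_{\widehat\varphi} \mathbb{Z}$ — which is well defined because $K$ is $\varphi$-invariant — has kernel contained in $\gamma^{(p)}_M(\Gamma)$ restricted appropriately; one checks $G_p \rtimes_{\widehat\varphi} \mathbb{Z}$ is nilpotent either directly (a finitely generated group whose lower $p$-series, equivalently whose lower central series after the obvious reductions, terminates, because $G_p$ is finite and $\mathbb{Z}$ acts unipotently modulo $p$ by Proposition \ref{MFcri}(b)), or by invoking that $\mathbb{Z}^n \rtimes_\varphi \mathbb{Z}$ residually $p$-finite forces each $p_i(1) \in p\mathbb{Z}$, so $[\varphi] - E$ acts nilpotently on every finite abelian $p$-quotient on which it is well defined.

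The one subtlety — and the step I expect to be the main obstacle — is ensuring that $M$ can be chosen so that the image of $g$ in $G_p \rtimes_{\widehat\varphi} \mathbb{Z}$ is still nontrivial, because $K = \gamma^{(p)}_M(\Gamma) \cap \mathbb{Z}^n$ may be strictly larger than $\gamma^{(p)}_M(\Gamma)$ contributes to, so killing $\Gamma$ down to $G_p \rtimes \mathbb{Z}$ could in principle kill $g$. To handle this I would split into two cases according to whether $g \in \mathbb{Z}^n$ or not. If $g \in \mathbb{Z}^n$, then since $\bigcap_M \bigl(\gamma^{(p)}_M(\Gamma) \cap \mathbb{Z}^n\bigr) = \bigl(\bigcap_M \gamma^{(p)}_M(\Gamma)\bigr) \cap \mathbb{Z}^n = 1$, we can pick $M$ with $g \notin K$, and then $g$ maps to a nontrivial element of $G_p \leq G_p \rtimes_{\widehat\varphi} \mathbb{Z}$. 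If $g \notin \mathbb{Z}^n$, write $g = v t^k$ with $v \in \mathbb{Z}^n$, $k \neq 0$; its image under $\Gamma \to \Gamma/\mathbb{Z}^n = \mathbb{Z}$ is $k \neq 0$, and this map factors through $G_p \rtimes_{\widehat\varphi} \mathbb{Z} \to \mathbb{Z}$, so the image of $g$ is already nontrivial for any $M$. Combining the two cases and taking the larger $M$ gives the required nilpotent quotient, completing the proof.
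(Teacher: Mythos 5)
Your construction is in essence the same as the paper's: the paper takes a normal subgroup $N_g$ of finite $p$-power index with $g\notin N_g$ and passes to the quotient by $\widetilde N_g=N_g\cap\mathbb{Z}^n$, while you take $N_g=\gamma^{(p)}_M(\Gamma)$ for $\Gamma=\mathbb{Z}^n\rtimes_\varphi\mathbb{Z}$ and $K=\gamma^{(p)}_M(\Gamma)\cap\mathbb{Z}^n$. The genuine gap is in the step you yourself mark as ``one checks nilpotence'': both justifications you offer rest on the claim that residual $p$-finiteness (equivalently, $p_i(1)\in p\mathbb{Z}$ in Proposition \ref{MFcri}) forces $[\varphi]-E$ to act nilpotently -- ``unipotently modulo $p$'' -- on every finite abelian $p$-quotient of $\mathbb{Z}^n$, and that claim is false. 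Take $n=2$, $[\varphi]=\left(\begin{smallmatrix}0&1\\1&5\end{smallmatrix}\right)\in\mathrm{GL}_2(\mathbb{Z})$, whose characteristic polynomial $x^2-5x-1$ is irreducible and takes the value $-5$ at $x=1$; by Proposition \ref{MFcri} the group $\mathbb{Z}^2\rtimes_\varphi\mathbb{Z}$ is residually $5$-finite. Yet modulo $5$ the matrix has the two distinct eigenvalues $1$ and $-1$, so $[\varphi]-E$ is not nilpotent on $\mathbb{Z}^2/5\mathbb{Z}^2$, and $(\mathbb{Z}/5)^2\rtimes_{\overline{\varphi}}\mathbb{Z}$ is not nilpotent (its lower central series stabilizes at the nontrivial $(-1)$-eigenspace). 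So neither of your two suggested arguments proves the nilpotence required in the lemma.

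The gap is repairable, and the repair is exactly the observation the paper makes: since $\Gamma/\gamma^{(p)}_M(\Gamma)$ is a finite $p$-group, it is nilpotent, so $\gamma_c(\Gamma)\le\gamma^{(p)}_M(\Gamma)$ for some $c$; moreover $\gamma_c(\Gamma)\le\gamma_2(\Gamma)\le\mathbb{Z}^n$, hence $\gamma_c(\Gamma)\le K$, and therefore $\Gamma/K\cong(\mathbb{Z}^n/K)\rtimes_{\widehat\varphi}\mathbb{Z}$ is a quotient of the nilpotent group $\Gamma/\gamma_c(\Gamma)$. (It is the special choice of $K$ that saves you; nilpotence is not automatic for arbitrary $p$-power quotients, as the example above shows.) Two smaller points: $\mathbb{Z}^n$ need not be characteristic in $\Gamma$ (take $\varphi=\mathrm{id}$), but you only need $K$ to be invariant under conjugation by $t$, which holds because $K$ is an intersection of two normal subgroups of $\Gamma$; and your case distinction on $g$ is unnecessary, since the kernel of $\Gamma\to(\mathbb{Z}^n/K)\rtimes_{\widehat\varphi}\mathbb{Z}$ is exactly $K\le\gamma^{(p)}_m(\Gamma)$, which already misses $g$.
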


\begin{proof}
By assumption, for any non-trivial element $g\in\mathbb{Z}^{n}\rtimes_{\varphi}\mathbb{Z}$ there exists
a normal subgroup  $N_{g}\lhd\mathbb{Z}^{n}\rtimes_{\varphi}\mathbb{Z}$
of finite  $p$-index, such that $g\notin N_{g}$. It is clear, that  $N_{g}$ contains some
 $\gamma_{k+1}(\mathbb{Z}^{n}\rtimes_{\varphi}\mathbb{Z})$.
Let us consider the normal subgroup  $\widetilde{N}_{g}=N_{g}\bigcap\mathbb{Z}^{n}$.
It is evidently, that $g\notin\widetilde{N}_{g}$,
$\widetilde{N}_{g}\geq\gamma_{k+1}(\mathbb{Z}^{n}\rtimes_{\varphi}\mathbb{Z})$
and     $|\widetilde{N}_{g}:\mathbb{Z}^{n}|=p^{s}$ for same $s\in\mathbb{N}$.
Hence, the normal subgroup $\widetilde{N}_{g}$ is a kernel of the need
homomorphism onto $G_{p}\rtimes_{\widehat{\varphi}}\mathbb{Z}$.
Under this homomorphism the non-trivial element  $g\in\mathbb{Z}^{n}\rtimes_{\varphi}\mathbb{Z}$
goes to a non-trivial element. Here  $\widehat{\varphi}$ is the automorphism which
is induced by  ${\varphi}$ since
$\widetilde{N}_{g}$ is normal in  $\mathbb{Z}^{n}\rtimes_{\varphi}\mathbb{Z}$.
\end{proof}

\begin{lemma}\label{Pphi1} In the group $\mathbb{Z}^{n}\rtimes_{\varphi}\mathbb{Z}$
the following inclusions  hold
$$
\mathbb{Z}^{n}\geq\gamma_{k}(\mathbb{Z}^{n}\rtimes_{\varphi}\mathbb{Z})\geq P^{k-1}_{\varphi}(1)\mathbb{Z}^{n}, k\geq2.
$$
\end{lemma}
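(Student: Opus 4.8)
The plan is to work inside $H = \mathbb{Z}^{n}\rtimes_{\varphi}\mathbb{Z}$, writing $\mathbb{Z}^{n}$ additively with the generator $t$ of $\mathbb{Z}$ acting by the matrix $A = [\varphi]\in\mathrm{GL}_n(\mathbb{Z})$, and to prove both inclusions by induction on $k\ge 2$. For the left-hand inclusion $\mathbb{Z}^{n}\ge\gamma_k(H)$: since $H/\mathbb{Z}^n\cong\mathbb{Z}$ is abelian, $\gamma_2(H)\le\mathbb{Z}^n$, and as $\mathbb{Z}^n$ is normal, $\gamma_k(H)=[\gamma_{k-1}(H),H]\le[\mathbb{Z}^n,H]\le\mathbb{Z}^n$ for all $k\ge 2$. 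For the right-hand inclusion, the base case $k=2$ amounts to the observation that for $v\in\mathbb{Z}^n$ the commutator $[v,t]$ equals (additively) $(A-E)v$, hence $\gamma_2(H)\supseteq[\mathbb{Z}^n,t]=(A-E)\mathbb{Z}^n$; now $(A-E)\mathbb{Z}^n\supseteq\det(A-E)\mathbb{Z}^n=P_\varphi(1)\mathbb{Z}^n$ by the adjugate identity $\mathrm{adj}(A-E)(A-E)=\det(A-E)E$ and the fact that $P_\varphi(1)=\det(E-A)=\pm\det(A-E)$.

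For the inductive step, assume $\gamma_{k}(H)\supseteq P_\varphi(1)^{k-1}\mathbb{Z}^n$. I would like to conclude $\gamma_{k+1}(H)=[\gamma_k(H),H]\supseteq[P_\varphi(1)^{k-1}\mathbb{Z}^n,t]=(A-E)\,P_\varphi(1)^{k-1}\mathbb{Z}^n\supseteq P_\varphi(1)^{k}\mathbb{Z}^n$, using again that $(A-E)\mathbb{Z}^n\supseteq P_\varphi(1)\mathbb{Z}^n$ together with the fact that $A-E$ commutes with the scalar $P_\varphi(1)^{k-1}$. The only care needed is that $[\gamma_k(H),H]$ contains the additive subgroup generated by $[w,t]$ for $w$ ranging over $\gamma_k(H)$; since $\gamma_k(H)\le\mathbb{Z}^n$ is a normal abelian subgroup on which conjugation by $H$ acts $\mathbb{Z}$-linearly through $A^{\pm1}$, the set $\{[w,t]:w\in\gamma_k(H)\}=(A-E)\gamma_k(H)$ is already a subgroup, so this is immediate and no further closure argument is required.

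I expect the main (and only real) obstacle to be bookkeeping: being careful that $\gamma_{k}(H)$ is genuinely a $\mathbb{Z}$-submodule of $\mathbb{Z}^n$ stable under $A$ (so that the commutator computations are legitimate module maps), and correctly tracking the sign and the precise meaning of $P_\varphi(1)$ so that $P^{k-1}_\varphi(1)\mathbb{Z}^n$ — rather than $(\det(A-E))^{k-1}\mathbb{Z}^n$, which is the same ideal since $P_\varphi(1)=\det(E-A)$ — comes out on the nose. Everything else is the elementary identity $(A-E)\mathbb{Z}^n\supseteq\det(A-E)\mathbb{Z}^n$ applied $k-1$ times.
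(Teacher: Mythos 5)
Your proposal is correct and follows essentially the same route as the paper: both reduce the lemma to the single inclusion $P_{\varphi}(1)\mathbb{Z}^{n}\subseteq([\varphi]-E)\mathbb{Z}^{n}$ and iterate it $k-1$ times inside $\gamma_{k}$, using that $\gamma_{k}(\mathbb{Z}^{n}\rtimes_{\varphi}\mathbb{Z})$ contains $([\varphi]-E)^{k-1}\mathbb{Z}^{n}$. The only cosmetic differences are that the paper derives the key inclusion from Cayley--Hamilton via the factorization $P_{\varphi}(x)=P_{\varphi}(1)+(x-1)f(x)$, whereas you use the adjugate identity together with $P_{\varphi}(1)=\pm\det([\varphi]-E)$ (an equivalent argument), and the paper quotes the equality $\gamma_{k}=([\varphi]-E)^{k-1}\mathbb{Z}^{n}$ where you verify the needed lower bound by a direct commutator induction.
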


\begin{proof} In $\mathbb{Z}^{n}\rtimes_{\varphi}\mathbb{Z}$ holds
$$
\gamma_{k}(\mathbb{Z}^{n}\rtimes_{\varphi}\mathbb{Z})=([\varphi]-E)^{k-1}\mathbb{Z}^{n}\leq\mathbb{Z}^{n}, ~~k\geq2.
$$
The evident equalities
$$
P_{\varphi}(x)=P_{\varphi}(1)+(x-1)f(x), f(x)\in\mathbb{Z}[x],~~P_{\varphi}([\varphi])=0
$$ imply
$$P_{\varphi}(1)\mathbb{Z}^{n}=([\varphi]-E)f([\varphi])\mathbb{Z}^{n}\leq([\varphi]-E)\mathbb{Z}^{n}.$$
Hence,    we get
$$
\mathbb{Z}^{n}\geq\gamma_{k}(\mathbb{Z}^{n}\rtimes_{\varphi}\mathbb{Z})\geq P^{k-1}_{\varphi}(1)\mathbb{Z}^{n}, ~~k\geq2.
$$
\end{proof}
%\newpage
%%%%%%%%%%%%%%%%%%%%%%%%%%%%%%%%%%%%%%%%%%%%%%%%%%%%%%%%%%%%%%%%%%%%%%%%%%%%%
%%%%%%%%%%%%%%%%%%%%%%%%%%%%%%%%%%%%%%%%%%%%%%%%%%%%%%%%%%%%%%%%%%%%%%%%%%%%%
%%%%%%%%%%%%%%%%%%%%%%%%%%%%%%%%%%%%%%%%%%%%%%%%%%%%%%%%%%%%%%%%%%%%%%%%%%%%%
%%%%%%%%%%%%%%%%%%%%%%             $G/A=B $           %%%%%%%%%%%%%%%%%%%%%%%
%%%%%%%%%%%%%%%%%%%%%%%%%%%%%%%%%%%%%%%%%%%%%%%%%%%%%%%%%%%%%%%%%%%%%%%%%%%%%
%%%%%%%%%%%%%%%%%%%%%%%%%%%%%%%%%%%%%%%%%%%%%%%%%%%%%%%%%%%%%%%%%%%%%%%%%%%%%
%%%%%%%%%%%%%%%%%%%%%%%%%%%%%%%%%%%%%%%%%%%%%%%%%%%%%%%%%%%%%%%%%%%%%%%%%%%%%
%%%%%%%%%%%%%%%%%%%%%%%%%%%%%%%%%%%%%%%%%%%%%%%%%%%%%%%%%%%%%%%%%%%%%%%%%%%%%

\section{Residually nilpotence of group extensions} \label{s3}

In this section we will use the following well known proposition (see, for example \cite{Kh})

\begin{prop}\label{TtoSec}
Let $G$ be a group, for $x \in G$ denote by $\overline{x}$
the image of $x$ in the quotient $G/G'$. Then for any natural number $n$ the map
$$
\theta : (\overline{x}_1 \otimes \overline{x}_2 \otimes \ldots \otimes \overline{x}_n)
\mapsto [x_1, x_2, \ldots, x_n] \gamma_{n+1} (G)
$$
induces a homomorphism
$$
\underbrace{G/G' \otimes G/G' \otimes \ldots \otimes G/G'}_n \to \gamma_{n} (G) / \gamma_{n+1} (G).
$$
\end{prop}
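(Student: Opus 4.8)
The plan is to show that the map $\theta$ on generators of the tensor power extends to a well-defined homomorphism by verifying (a) that the value $[x_1,\dots,x_n]\gamma_{n+1}(G)$ depends only on the cosets $\overline{x}_i = x_i G'$, and (b) that $\theta$ is additive (i.e.\ multiplicative into the abelian group $\gamma_n(G)/\gamma_{n+1}(G)$) in each slot separately. Since the free abelian group $\bigotimes_{i=1}^n G/G'$ is generated by the pure tensors $\overline{x}_1\otimes\cdots\otimes\overline{x}_n$ subject exactly to multilinearity relations, properties (a) and (b) together suffice. First I would record the basic structural facts: $\gamma_n(G)/\gamma_{n+1}(G)$ is abelian (since $[\gamma_n(G),G]=\gamma_{n+1}(G)$), so the target is a legitimate codomain for a homomorphism out of a tensor power of abelian groups; and $\gamma_n(G) = \langle [x_1,\dots,x_n] : x_i\in G\rangle \cdot \gamma_{n+1}(G)$, so the images of the pure tensors generate the target, which incidentally will make $\theta$ surjective (though surjectivity is not claimed in the statement).

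For additivity in a given slot, say the last, I would use identity \eqref{f1}: $[x, yz] = [x,z][x,y][x,y,z]$. Applied with $x = [x_1,\dots,x_{n-1}]$ this gives
$$
[x_1,\dots,x_{n-1},yz] = [x_1,\dots,x_{n-1},z]\,[x_1,\dots,x_{n-1},y]\,[x_1,\dots,x_{n-1},y,z].
$$
The last factor lies in $[\gamma_n(G),G] = \gamma_{n+1}(G)$, and modulo $\gamma_{n+1}(G)$ the group is abelian so the order of the first two factors is immaterial; hence $\theta$ is additive in the last slot. For an interior slot, say slot $j$, I would proceed by downward induction / iterated use of \eqref{f1} and the analogous expansion $[xy,z]=[x,z][x,z,y][y,z]$ for the first slot: expanding $[x_1,\dots,x_j y_j, \dots, x_n]$ requires commuting through the left-normalized bracket structure, and every correction term produced is a commutator of weight $\geq n+1$ in elements of $G$, hence lies in $\gamma_{n+1}(G)$. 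This is the routine-but-careful core of the argument. An alternative, cleaner route for interior slots: once additivity is known in the last slot, one can reduce additivity in slot $j<n$ to additivity in the last slot of a bracket of smaller length by noting $[x_1,\dots,x_n] = [\,[x_1,\dots,x_{j}],x_{j+1},\dots,x_n]$ and using that $[x_1,\dots,x_j]$ modulo $G'$ — no, more precisely by induction on $n$: assume the result for $n-1$, so the assignment $\overline{x}_1\otimes\cdots\otimes\overline{x}_{n-1}\mapsto [x_1,\dots,x_{n-1}]\gamma_n(G)$ is a well-defined homomorphism $\bigotimes^{n-1}G/G'\to \gamma_{n-1}(G)/\gamma_n(G)$, and then compose with the bracketing-by-$x_n$ map.

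For well-definedness in the cosets (property (a)), I would show that replacing $x_j$ by $x_j c$ with $c\in G'$ does not change $[x_1,\dots,x_n]$ modulo $\gamma_{n+1}(G)$. Using the expansions above, $[x_1,\dots,x_j c,\dots,x_n]$ differs from $[x_1,\dots,x_j,\dots,x_n]$ by factors of the form $[\,\cdots, c, \cdots]$ where $c\in G'=\gamma_2(G)$; by \eqref{gsum} such a commutator involving one entry from $\gamma_2(G)$ and the remaining $n-1$ entries from $G=\gamma_1(G)$ lies in $\gamma_{n+1}(G)$ (total weight $2 + (n-1) = n+1$), so it vanishes in the target. Combining this with Lemma~\ref{KhC}/\eqref{gsum} to control all correction terms uniformly is the cleanest bookkeeping device. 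The main obstacle is purely organizational: handling interior slots $1<j<n$ without an avalanche of commutator identities; the induction-on-$n$ reduction described above is what keeps this under control, so I would structure the whole proof as an induction on $n$, with base case $n=1$ trivial ($\theta$ is the identity map $G/G'\to\gamma_1(G)/\gamma_2(G)$) and the inductive step handling only the last slot via \eqref{f1} plus the fact that $\gamma_n(G)/\gamma_{n+1}(G)$ is central in $G/\gamma_{n+1}(G)$.
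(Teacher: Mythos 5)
The paper itself gives no proof of this proposition: it is stated as ``well known'' with a citation to \cite{Kh}, so there is nothing internal to compare against. Your argument is correct and is essentially the standard one found in such references: establish that the commutator map $G^{\times n}\to\gamma_n(G)/\gamma_{n+1}(G)$ is multiplicative in each slot modulo $\gamma_{n+1}(G)$ and kills $G'$ in each slot, then invoke the universal property of the tensor product. The two load-bearing points are both present and sound: additivity in the last slot via \eqref{f1} with the correction term $[x_1,\dots,x_{n-1},y,z]\in[\gamma_n(G),G]=\gamma_{n+1}(G)$, and the reduction of interior slots by induction on $n$, which amounts to checking that $u\gamma_n(G)\mapsto[u,x_n]\gamma_{n+1}(G)$ is a well-defined homomorphism $\gamma_{n-1}(G)/\gamma_n(G)\to\gamma_n(G)/\gamma_{n+1}(G)$ (using $[\gamma_n(G),G]\le\gamma_{n+1}(G)$ and $[\gamma_n(G),\gamma_{n-1}(G)]\le\gamma_{2n-1}(G)\le\gamma_{n+1}(G)$, i.e.\ \eqref{gsum}); coset-independence then follows from multilinearity together with the weight count showing $[x_1,\dots,c,\dots,x_n]\in\gamma_{n+1}(G)$ for $c\in G'$. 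Two cosmetic remarks: the tensor power $\bigotimes^n G/G'$ is not itself free abelian in general --- what you actually use, correctly, is its presentation as the free abelian group on pure-tensor symbols modulo multilinearity relations; and the half-finished alternative for interior slots (the sentence you abandon mid-stream) should simply be deleted in favour of the induction-on-$n$ scheme you settle on, which is the right organization and closes the argument.
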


Let $G$ be an extension of a group  $A$ by a group
 $B$, i.e.  we have the short exact sequence:
$$
1 \to A \to G \to B \to 1.
$$
Denote $\overline{A}=A/\gamma_2(A)$ and consider the quotient
$\gamma_{k} (A) /\gamma_{k+1} (A)$. By Proposition \ref{TtoSec} this quotient
 is a homomorphic image of the tensor product:
$$
\overline{A}^{\otimes k}=\underbrace{\overline{A} \otimes \ldots \otimes \overline{A}}_k.
$$
The action of  $G$ on $\gamma_{k} (A) /\gamma_{k+1} (A)$ and on $\overline{A}^{\otimes k}$ by conjugations
 agreed with the homomorphism
$\overline{A}^{\otimes k} \rightarrow \gamma_{k} (A) /\gamma_{k+1} (A)$.

Conjugations by elements of $B$ on the quotient
$\overline{A}=A/\gamma_2 (A)$ induces  the linear transformation
$\beta :\overline{A} \rightarrow \overline{A}$:
$$
a^b\gamma_2 (A)=\beta(a)\gamma_2 (A),
$$
where  $a\in A$, $b\in B$. Also, the action by conjugations of $B$ on $\overline{A}$
can be extended to the action of the group ring   $\mathbb{Z}[B]$
on $\overline{A}$. Hence, the group $\overline{A}$ is a left  $\mathbb{Z}[B]$-module.
Denote by  $\Delta_B$ the augmentation ideal of the ring   $\mathbb{Z}[B]$.

\begin{theorem} \label{t3.1}
Let $G$ be an extension of  $A$ by  $B$ and

1) $A$ is residually nilpotent, i.e.  $\gamma_{\omega} (A)=1$;

2) $B$ is  nilpotent, i.e. $\gamma_s (B)=1$ for some $s\geq 2$;

3) there exists some natural number  $N\geq 1$ such that  $\Delta_B^N \overline{A}=0$.

Then  $G$ is residually nilpotent.
\end{theorem}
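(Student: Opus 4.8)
The goal is to show $\gamma_\omega(G) = 1$. The natural strategy is to show that for every non-identity $g \in G$, some nilpotent quotient of $G$ separates it, and this splits into two cases according to whether $g \in A$ or not. The key structural input is hypothesis (3): since $\Delta_B^N \overline{A} = 0$, the $\mathbb Z[B]$-module $\overline{A}$ is nilpotent, and combined with (2), the quotient $G/\gamma_2(A)$ is itself nilpotent. Indeed $G/\gamma_2(A)$ is an extension of the $\mathbb Z[B]$-module $\overline A$ by the nilpotent group $B$, and a standard commutator-collection argument shows $\gamma_{N+s}(G/\gamma_2(A)) = 1$: commutators of sufficiently large weight either land in $\gamma_s(B) = 1$ after projecting to $B$, or involve at least $N$ letters from $B$ acting on $\overline A$, hence lie in $\Delta_B^N \overline A = 0$. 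So $\gamma_2(A) \supseteq \gamma_m(G)$ for $m = N+s$; in particular this already separates every $g \notin \gamma_2(A)$ in a nilpotent quotient. This handles all $g \notin A$ and, more generally, all $g \notin \gamma_2(A)$.

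The remaining and harder case is a non-identity $g \in \gamma_2(A)$ — indeed $g \in \gamma_k(A) \setminus \gamma_{k+1}(A)$ for some finite $k \geq 2$, using that $A$ is residually nilpotent (hypothesis (1)) so $\gamma_\omega(A) = 1$. The idea is to find, for each fixed $k$, a nilpotent quotient of $G$ in which the image of $\gamma_k(A)/\gamma_{k+1}(A)$ survives. By Proposition \ref{TtoSec}, $\gamma_k(A)/\gamma_{k+1}(A)$ is a $\mathbb Z[B]$-module quotient of $\overline{A}^{\otimes k}$, and the augmentation ideal acts on $\overline{A}^{\otimes k}$ through a Leibniz-type rule, so $\Delta_B^{kN}$ (or $\Delta_B^{(k-1)N+1}$) annihilates $\overline{A}^{\otimes k}$, hence annihilates $\gamma_k(A)/\gamma_{k+1}(A)$ as well. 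Therefore the group $G/\gamma_{k+1}(A)$ is again built from a nilpotent $B$, the nilpotent $\mathbb Z[B]$-module structure on the successive factors $\gamma_j(A)/\gamma_{j+1}(A)$ for $j \leq k$, and is itself nilpotent by the same collection argument applied iteratively up the central series of $A$. Since $g \notin \gamma_{k+1}(A)$, its image in this finite nilpotent quotient $G/\gamma_{k+1}(A)$ is nontrivial.

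Concretely, I would carry out the steps in this order. First, record the module-theoretic lemma: if $\Delta_B^N M = 0$ and $\Delta_B^{N'} M' = 0$ then $\Delta_B^{N+N'-1}(M \otimes M') = 0$, whence $\Delta_B^{k(N-1)+1} \overline{A}^{\otimes k} = 0$; pull this back through the surjection of Proposition \ref{TtoSec} to get $\Delta_B^{r_k}(\gamma_k(A)/\gamma_{k+1}(A)) = 0$ for an explicit $r_k$. Second, prove the nilpotency lemma: if $H$ has a finite normal series $H = H_0 \supseteq H_1 \supseteq \cdots \supseteq H_\ell = 1$ with each $H_j/H_{j+1}$ central-by-(nilpotent group $B$) in a suitable module sense and each handled by a $\Delta_B$-power bound, then $H$ is nilpotent with an explicit class bound — this is the commutator-collection bookkeeping, using the identities (\ref{f1})–(\ref{f2}) and Lemma \ref{KhC} to push $B$-letters to the right. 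Third, apply this to $H = G/\gamma_{k+1}(A)$ with the series coming from $\gamma_j(A)/\gamma_{j+1}(A)$, $1 \leq j \leq k$, on top of $B$. Fourth, assemble: given $1 \neq g \in G$, if $g \notin A$ use the $k=1$ case ($G/\gamma_2(A)$ nilpotent); if $g \in A$, pick $k$ with $g \in \gamma_k(A)\setminus\gamma_{k+1}(A)$ (possible since $\gamma_\omega(A)=1$) and use $G/\gamma_{k+1}(A)$. The main obstacle is the second step: making the commutator-collection argument precise enough to get a genuine finite nilpotency class for $G/\gamma_{k+1}(A)$, keeping careful track of how augmentation-ideal powers interact with the non-abelian structure of $B$ and with the tower of central factors of $A$ — the identities listed in the preliminaries and Lemma \ref{KhC} are exactly what is needed, but the bookkeeping is delicate.
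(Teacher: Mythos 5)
Your proposal is correct and is essentially the paper's argument in a different packaging: saying that each quotient $G/\gamma_{k+1}(A)$ is nilpotent is exactly the paper's inclusion $\gamma_{s+Np(N,k)}(G)\subseteq\gamma_{k+1}(A)$, and both rest on the same ingredients — Proposition \ref{TtoSec}, the Leibniz-type action of $\Delta_B$ on $\overline{A}^{\otimes k}$ forcing some power of $\Delta_B$ to annihilate $\gamma_k(A)/\gamma_{k+1}(A)$, nilpotency of $B$ giving $\gamma_s(G)\subseteq A$, and finally $\gamma_\omega(A)=1$. Your per-layer bound $k(N-1)+1$ (pigeonhole) versus the paper's $N^k$ is an immaterial quantitative difference.
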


\begin{proof}
Using induction by $k\in \mathbb{N} \cup \left\{ 0 \right\}$ let us prove the inclusion
$$
\gamma_{s+N p(N,k)} (G) \subseteq \gamma_{k+1} (A),
$$
where $p(N,k)=1+N+\ldots +N^{k-1}$ for $k \geq 1$ and $p(N,0)=0$.

Since $\gamma_s (B)=1$, we have $\gamma_s (G) \subseteq A$.

Suppose that for  $k-1$ the inclusion holds.
The quotient  $\gamma_{k} (A) /\gamma_{k+1} (A)$ is the homomorphic image of the tensor product
$\overline{A}^{\otimes k}$.
It is enough  to prove that
$$
[\overline{A}^{\otimes k}, {}_{N^k}G]=0.
$$
Since the action of  $A$ on the quotient  $\overline{A}$ is trivial, we must prove that
$$
[\overline{A}^{\otimes k}, {}_{N^k}B]=0.
$$

Let  $a_1\otimes \ldots \otimes a_k$ be some element of $\overline{A}^{\otimes k}$ and
$b \in B$. Then
$$
[a_1\otimes \ldots \otimes a_k,b]=a_1^b\otimes \ldots \otimes a_k^b -a_1\otimes \ldots \otimes a_k=
$$
$$
=\beta a_1\otimes \ldots \otimes \beta a_k -a_1\otimes \ldots \otimes a_k=
$$
$$
=(\beta-id+id) a_1\otimes \ldots \otimes (\beta-id+id) a_k -a_1\otimes \ldots \otimes a_k=
$$
$$
=\left((\beta-id)\otimes \ldots \otimes (\beta-id)+ \ldots+  id\otimes \ldots \otimes (\beta-id) \right)
 a_1\otimes \ldots \otimes a_k.
$$
Here  $id$ is the identity transformation.

Hence, the following inclusion holds
$$
[\overline{A}^{\otimes k}, b]\subseteq T_k(\beta) \overline{A}^{\otimes k},
$$
where
$$
T_k(\beta) = (\beta-id)\otimes id \otimes \ldots \otimes id + \ldots+  id\otimes \ldots \otimes id\otimes (\beta-id)
$$
(the operator $\beta-id$ occurs only once in each term).

If $b_1,\ldots,b_m \in B$, then
$$
[\overline{A}^{\otimes k}, b_1,\ldots,b_m]\subseteq T_k(\beta_1)\cdot\ldots\cdot T_k(\beta_m)\overline{A}^{\otimes k}.
$$
For $m\geq N^k$ by condition 3) we have
$$
T_k(\beta_1)\cdot\ldots\cdot T_k(\beta_m)=0,
$$
and hence
$$
[\gamma_k A, {}_{N^k}G]\subseteq \gamma_{k+1} A.
$$
\end{proof}

%\vspace{1cm}

%\newpage

\begin{theorem} \label{t3.2}
Let $G$ be an extension of  $A$ by $B$ and

1)  $A$ and  $B$ are residually nilpotent;

2) there are natural numbers  $N\geq 1$ and  $m\geq 2$ such that
$$
\Delta_B^N \overline{A} \subseteq m \overline{A};
$$

3) $ \bigcap\limits_{l\geq 1} m^l \overline{A}^{\otimes k}=0$, $k\in \mathbb{N}$.

Then $\gamma_{\omega^2} (G)=1$.
\end{theorem}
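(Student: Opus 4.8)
The plan is to prove, by induction on $k\geq 1$, the inclusion
$$
\gamma_{\omega\cdot k}(G)\subseteq\gamma_{k}(A),
$$
where $\omega\cdot k$ denotes the ordinal product. Granting this, since the ordinals $\omega\cdot k$ are cofinal in $\omega^{2}$ and the transfinite lower central series is a decreasing chain, we get
$$
\gamma_{\omega^{2}}(G)=\bigcap_{k\geq 1}\gamma_{\omega\cdot k}(G)\subseteq\bigcap_{k\geq 1}\gamma_{k}(A)=\gamma_{\omega}(A)=1,
$$
the last equality being condition 1) for $A$. For the base case $k=1$ one notes that the image of $\gamma_{\omega}(G)=\bigcap_{n}\gamma_{n}(G)$ in $B$ lies in $\bigcap_{n}\gamma_{n}(B)=\gamma_{\omega}(B)=1$ by condition 1) for $B$, hence $\gamma_{\omega}(G)\subseteq A=\gamma_{1}(A)$.

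For the inductive step, assume $\gamma_{\omega\cdot k}(G)\subseteq\gamma_{k}(A)$. Since $\omega\cdot(k+1)=\omega\cdot k+\omega$ is a limit ordinal and $\gamma_{\omega\cdot k+n}(G)=[\gamma_{\omega\cdot k}(G),{}_{n}G]$ for every finite $n$, the inductive hypothesis together with the normality of $\gamma_{k}(A)$ in $G$ gives
$$
\gamma_{\omega\cdot(k+1)}(G)=\bigcap_{n\geq 1}[\gamma_{\omega\cdot k}(G),{}_{n}G]\subseteq\bigcap_{n\geq 1}[\gamma_{k}(A),{}_{n}G].
$$
So it is enough to show $\bigcap_{n}[\gamma_{k}(A),{}_{n}G]\subseteq\gamma_{k+1}(A)$. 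As $A$ acts trivially on $V_{k}:=\gamma_{k}(A)/\gamma_{k+1}(A)$, passing to this quotient reduces the task to showing that the left $\mathbb{Z}[B]$--module $V_{k}$ is residually nilpotent, i.e. $\bigcap_{n}[V_{k},{}_{n}B]=0$.

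Here I would reuse the operator computation from the proof of Theorem \ref{t3.1}. With $\beta_{j}$ the transformation of $\overline{A}$ induced by $b_{j}\in B$, one has $[\overline{A}^{\otimes k},b_{1},\dots,b_{n}]\subseteq T_{k}(\beta_{1})\cdots T_{k}(\beta_{n})\,\overline{A}^{\otimes k}$, where each $T_{k}(\beta)$ is the sum of the $k$ operators carrying $\beta-id$ in one tensor slot and $id$ in the others. Expanding this product and tracing one fixed tensor slot, every resulting summand acts on that slot by an element of the form $(b_{j_{1}}-1)\cdots(b_{j_{r}}-1)\in\Delta_{B}^{r}$. Taking $n=N^{k}$, the pigeonhole principle (exactly as in the proof of Theorem \ref{t3.1}) forces $r\geq N$ in at least one slot, and condition 2) then carries that slot into $\Delta_{B}^{N}\overline{A}\subseteq m\overline{A}$; hence each summand maps $\overline{A}^{\otimes k}$ into $m\overline{A}^{\otimes k}$, so $[\overline{A}^{\otimes k},{}_{N^{k}}B]\subseteq m\overline{A}^{\otimes k}$. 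Since $m\overline{A}^{\otimes k}$ is a $\mathbb{Z}[B]$--submodule, iteration yields $[\overline{A}^{\otimes k},{}_{lN^{k}}B]\subseteq m^{l}\overline{A}^{\otimes k}$ for all $l\geq 1$.

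It then remains to transfer this estimate to $V_{k}$ and bring in condition 3). Applying the $G$--equivariant surjection $\theta\colon\overline{A}^{\otimes k}\to V_{k}$ of Proposition \ref{TtoSec} gives $[V_{k},{}_{lN^{k}}B]\subseteq m^{l}V_{k}$, whence $\bigcap_{n}[V_{k},{}_{n}B]\subseteq\bigcap_{l}m^{l}V_{k}$. I expect this last step to be the main obstacle: condition 3) is formulated for the full tensor power $\overline{A}^{\otimes k}$, so one must propagate the $m$--adic bound down along $\theta$ and argue that $\bigcap_{l}m^{l}V_{k}=0$ for the quotient module $V_{k}=\gamma_{k}(A)/\gamma_{k+1}(A)$ as well — this is exactly the point where conditions 1)--3) must be combined (in particular the residual nilpotence of $A$, which constrains the kernel of $\theta$). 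Once $\bigcap_{n}[V_{k},{}_{n}B]=0$ is established the induction closes and the theorem follows; the remaining details are routine bookkeeping with the transfinite ordinals and the commutator identities recorded in Section \ref{s2}.
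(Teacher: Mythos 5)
Up to your final paragraph your argument is the paper's own proof: the same induction (the paper states it as $\gamma_{(k+1)\omega}(G)\subseteq\gamma_{k}(A)$, yours as $\gamma_{\omega\cdot k}(G)\subseteq\gamma_{k}(A)$, which is equivalent for the purpose), the same base case via $\gamma_{\omega}(B)=1$, the same reduction of the inductive step to the $\mathbb{Z}[B]$-module $V_{k}=\gamma_{k}(A)/\gamma_{k+1}(A)$ regarded as an image of $\overline{A}^{\otimes k}$, the same operators $T_{k}(\beta)$ with the pigeonhole count, and the same iteration giving $[\overline{A}^{\otimes k},{}_{lN^{k}}B]\subseteq m^{l}\overline{A}^{\otimes k}$, i.e. $[\gamma_{k}(A),{}_{lN^{k}}G]\subseteq(\gamma_{k}(A))^{m^{l}}\gamma_{k+1}(A)$. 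The final deduction $\gamma_{\omega^{2}}(G)\subseteq\bigcap_{k}\gamma_{k}(A)=1$ is also exactly what the paper intends.

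However, the step you explicitly defer --- proving $\bigcap_{l}m^{l}V_{k}=0$ --- is not ``routine bookkeeping''; it is the one place where condition 3) has to act, and your proposal does not establish it, so as written the proof is incomplete. Your instinct about where the difficulty lies is sound: trivial $m$-adic residue does not pass to homomorphic images in general (for instance $\bigcap_{l}m^{l}\mathbb{Z}=0$ while a quotient $\mathbb{Z}/q\mathbb{Z}$ with $\gcd(q,m)=1$ satisfies $\bigcap_{l}m^{l}(\mathbb{Z}/q\mathbb{Z})=\mathbb{Z}/q\mathbb{Z}$), so the bound on $\overline{A}^{\otimes k}$ does not formally transfer along $\theta$ to $V_{k}$. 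But your proposed repair is not what the paper does: the paper never invokes the residual nilpotence of $A$ or any control of $\ker\theta$ at this point (condition 1) for $A$ is used only at the very end, to get $\bigcap_{k}\gamma_{k}(A)=1$); it simply asserts the equality $\bigcap_{l}(\gamma_{k}(A))^{m^{l}}\gamma_{k+1}(A)=\gamma_{k+1}(A)$, i.e. it reads condition 3) as if it applied to the quotient module $V_{k}$ of $\overline{A}^{\otimes k}$. So the gap you flag is a genuine gap in your write-up, and it happens to coincide with the one point the paper itself disposes of by assertion rather than argument; to close it you would either need an actual proof of $\bigcap_{l}m^{l}V_{k}=0$ under the stated hypotheses, or a reformulation of condition 3) for the graded quotients $\gamma_{k}(A)/\gamma_{k+1}(A)$ themselves.
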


\begin{proof}
Using induction by  $k\in \mathbb{N}$, let us prove the inclusion
$$
\gamma_{(k+1)\omega} (G) \subseteq \gamma_{k} (A).
$$

From the equality  $\gamma_\omega (B)=1$ follows that $\gamma_\omega (G) \subseteq A$.

Suppose that for  $k-1$ the  inclusion holds.
The quotient  $\gamma_{k} (A) /\gamma_{k+1} (A)$ is a homomorphic image of the tensor product
$\overline{A}^{\otimes k}$.
It is enough to prove that
$$
 \bigcap\limits_{l\geq 1} [\overline{A}^{\otimes k}, {}_{l}G]\subseteq \gamma_{k} (A).
$$
Since  $A$ acts on the quotient  $\overline{A}$ as identity map, we have to  prove the inclusion
$$
 \bigcap\limits_{l\geq 1} [\overline{A}^{\otimes k}, {}_{l}B]\subseteq \gamma_{k} (A).
$$

Let  $a_1\otimes \ldots \otimes a_k$ be some element of  $\overline{A}^{\otimes k}$ and
$b \in B$. Then
$$
[a_1\otimes \ldots \otimes a_k,b]=a_1^b\otimes \ldots \otimes a_k^b -a_1\otimes \ldots \otimes a_k=
$$
$$
=\beta a_1\otimes \ldots \otimes \beta a_k -a_1\otimes \ldots \otimes a_k=
$$
$$
=(\beta-id+id) a_1\otimes \ldots \otimes (\beta-id+id) a_k -a_1\otimes \ldots \otimes a_k=
$$
$$
=\left((\beta-id)\otimes \ldots \otimes (\beta-id)+ \ldots+  id\otimes \ldots \otimes (\beta-id) \right)
 a_1\otimes \ldots \otimes a_k.
$$
Hence,
$$
[\overline{A}^{\otimes k}, b]\subseteq T_k(\beta) \overline{A}^{\otimes k},
$$
where
$$
T_k(\beta) = (\beta-id)\otimes id \otimes \ldots \otimes id+ \ldots+  id\otimes \ldots \otimes id\otimes (\beta-id)
$$
(the operator $\beta-id$ occurs only once in each term).

If $b_1,\ldots,b_s \in B$, then
$$
[\overline{A}^{\otimes k}, b_1,\ldots,b_s]\subseteq
T_k(\beta_1)\cdot\ldots\cdot T_k(\beta_s)\overline{A}^{\otimes k}.
$$
For $s\geq N^k$ by condition 3)
$$
T_k(\beta_1)\cdot\ldots\cdot T_k(\beta_s)\overline{A}^{\otimes k} \subseteq m \overline{A}^{\otimes k}
$$
and hence,
$$
[\gamma_k (A), {}_{N^k}G] \subseteq  (\gamma_k (A))^m \gamma_{k+1} (A),
$$
where $(\gamma_k (A))^m$ is the subgroup of $\gamma_k (A)$ that is generated by
$m$-th powers of commutators. We get
$$
 \bigcap\limits_{l\geq 1} [\gamma_k (A), {}_{lN^k}B]\subseteq
 \bigcap\limits_{l\geq 1} (\gamma_k (A))^{l m} \gamma_{k+1} (A)= \gamma_{k+1} (A).
$$
\end{proof}

\begin{theorem} \label{t3.3}
Let $G$ be an extension of  $A$ by $B$, $p \geq 2$  a prime number and

1)  $A$ is residually $p$-nilpotent;

2) $B$ is nilpotent, i.e. $\gamma_s (B)=1$ for some $s\geq 2$;

3) there are natural number  $N\geq 1$  such that
$$
\Delta_B^N \overline{A} \subseteq p \overline{A}.
$$
Then  $G$ is residually nilpotent.
\end{theorem}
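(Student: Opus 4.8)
The plan is to reduce the statement to Theorem~\ref{t3.1} by replacing $A$ with suitable quotients of bounded exponent. Since $\gamma_s(B)=1$, every $g\in G\setminus A$ has nontrivial image in the nilpotent group $B=G/A$, so it suffices to separate, by nilpotent quotients of $G$, the nontrivial elements lying in $A$. Fix $g\in A$, $g\neq 1$. Because $A$ is residually $p$-nilpotent there is some $n$ (necessarily $n\geq 2$, since $g\in A=\gamma^{(p)}_1(A)$) with $g\notin\gamma^{(p)}_n(A)$. The subgroup $\gamma^{(p)}_n(A)$ is characteristic in $A$, hence normal in $G$, so I would pass to $\overline{G}:=G/\gamma^{(p)}_n(A)$, an extension $1\to\overline{A}\to\overline{G}\to B\to 1$ with $\overline{A}=A/\gamma^{(p)}_n(A)$, in which the image of $g$ is still nontrivial.

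Next I would verify that $\overline{G}$ satisfies the hypotheses of Theorem~\ref{t3.1}. From the identity $\gamma^{(p)}_n(A)=A^{p^{n-1}}(\gamma_2 A)^{p^{n-2}}\cdots(\gamma_{n-1}A)^{p}\gamma_n(A)$ recalled in Section~\ref{s2}, we get both $\gamma_n(A)\subseteq\gamma^{(p)}_n(A)$ and $A^{p^{n-1}}\subseteq\gamma^{(p)}_n(A)$; hence $\overline{A}$ is nilpotent of class $<n$ (in particular residually nilpotent) and of exponent dividing $p^{n-1}$. The module $\overline{A}/[\overline{A},\overline{A}]$ --- which is the relevant one for applying Theorem~\ref{t3.1} to $\overline{G}$ --- is a $B$-equivariant quotient of $\overline{A}=A/[A,A]$, because $\gamma^{(p)}_n(A)$ is invariant under the action of $B$; therefore condition 3) of the present theorem, $\Delta^N_B\,\overline{A}\subseteq p\,\overline{A}$, descends to $\Delta^N_B\bigl(\overline{A}/[\overline{A},\overline{A}]\bigr)\subseteq p\bigl(\overline{A}/[\overline{A},\overline{A}]\bigr)$. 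Iterating this inclusion and using $\Delta_B\cdot(pM)=p\cdot\Delta_B M$ gives $\Delta^{kN}_B\bigl(\overline{A}/[\overline{A},\overline{A}]\bigr)\subseteq p^{k}\bigl(\overline{A}/[\overline{A},\overline{A}]\bigr)$ for every $k$, and since $\overline{A}$ has exponent dividing $p^{n-1}$ the right-hand side is $0$ for $k=n-1$. Thus $\overline{G}$ is an extension of the residually nilpotent group $\overline{A}$ by the nilpotent group $B$ with $\Delta^{(n-1)N}_B\bigl(\overline{A}/[\overline{A},\overline{A}]\bigr)=0$, so Theorem~\ref{t3.1} applies and $\overline{G}$ is residually nilpotent.

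Finally, since the image of $g$ in $\overline{G}$ is nontrivial and $\overline{G}$ is residually nilpotent, $g$ survives in some nilpotent quotient of $\overline{G}$, which is also a nilpotent quotient of $G$; hence $G$ is residually nilpotent. I expect the only substantive step to be the reduction itself: the observation that $A/\gamma^{(p)}_n(A)$ has bounded exponent $p^{n-1}$, which is exactly what upgrades the mere inclusion $\Delta^N_B\overline{A}\subseteq p\overline{A}$ into genuine nilpotency of the $\mathbb{Z}[B]$-module over the quotient, thereby bringing us into the setting of Theorem~\ref{t3.1}. Alternatively one could imitate the proof of Theorem~\ref{t3.1} directly, working with the lower $p$-series of $A$ and the action of $B$ on the sections $\gamma^{(p)}_k(A)/\gamma^{(p)}_{k+1}(A)$, but the reduction above avoids having to analyse the $p$-th power contributions to those sections.
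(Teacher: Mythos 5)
Your proof is correct, and it takes a genuinely different route from the paper. The paper argues directly inside $G$: by induction on $k$ it produces integers $m_k$ with $\gamma_{m_k}(G)\subseteq\gamma^{(p)}_{k+1}(A)$, using the decomposition $\gamma^{(p)}_k(A)=A^{p^{k-1}}(\gamma_2 A)^{p^{k-2}}\cdots(\gamma_{k-1}A)^p\gamma_k(A)$ and the tensor-power commutator estimates from the proof of Theorem~\ref{t3.2} to control $[(\gamma_l A)^{p^{k-l}},{}_{n}G]$; this yields the structural inclusion $\gamma_{\omega}(G)\subseteq\gamma^{(p)}_{\omega}(A)=1$. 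You instead separate elements one at a time: for $g\in A\setminus\{1\}$ you choose $n$ with $g\notin\gamma^{(p)}_n(A)$, pass to $G/\gamma^{(p)}_n(A)$ (legitimate, since $\gamma^{(p)}_n(A)$ is characteristic in $A$, hence normal in $G$), note that the kernel there is nilpotent of class $<n$ and of exponent dividing $p^{n-1}$, and then upgrade $\Delta_B^N\overline{A}\subseteq p\overline{A}$ to $\Delta_B^{(n-1)N}=0$ on the abelianization of the quotient, so Theorem~\ref{t3.1} applies; elements outside $A$ are handled by the nilpotent quotient $B$. All the steps check out: the inclusions $A^{p^{n-1}},\gamma_n(A)\subseteq\gamma^{(p)}_n(A)$ are exactly the displayed formula for $\gamma^{(p)}_{m+1}$ in Section~\ref{s2}, the surjection $A/\gamma_2(A)\to$ (abelianization of the quotient) is $B$-equivariant, and the iteration $\Delta_B^{kN}\subseteq p^k(\cdot)$ is immediate from $\mathbb{Z}$-linearity of the $B$-action. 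What each approach buys: yours is shorter and avoids Theorem~\ref{t3.2} altogether, reducing everything to Theorem~\ref{t3.1} via quotients of bounded $p$-power exponent; the paper's proof is heavier but gives the quantitative interleaving $\gamma_{m_k}(G)\subseteq\gamma^{(p)}_{k+1}(A)$ of the two filtrations, which is slightly more information than residual nilpotence alone (though the theorem as stated only asks for the latter).
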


\begin{proof}
Using induction by $k\in \mathbb{N}$ let us prove that for some natural number  $m_k$
holds
$$
\gamma_{m_k} (G) \subseteq \gamma_{k+1}^{(p)} (A).
$$

The equality  $\gamma_s (B)=1$ implies $\gamma_s (G) \subseteq A$.
Hence, we can put $m_0=s$.

Suppose that we have proved the inclusion
$$
\gamma_{m_{k-1}} (G) \subseteq \gamma_{k}^{(p)} (A).
$$
Let us consider the group $[\gamma_{k}^{(p)} (A), G]$.
Since
$$
\gamma_{k}^{(p)} (A)=
(\gamma_{1} (A))^{p^{k-1}}(\gamma_{2} (A))^{p^{k-2}}\cdot\ldots\cdot (\gamma_{k-1} (A))^{p} \gamma_{k} (A),
$$
and any subgroup  $(\gamma_{l} (A))^{p^{k-l}}$, $l=1,\ldots,k$, is characteristic in  $A$, then
$$
[\gamma_{k}^{(p)} (A), G]\subseteq \prod\limits_{l=1}^{k} [(\gamma_{l} (A))^{p^{k-l}},G].
$$
 Theorem \ref{t3.2} implies that there exists a natural number  $n_1\in \mathbb{N}$ such that
$$
[(\gamma_{1} (A))^{p^{k-1}},{}_{n_1}G] \subseteq (\gamma_{1} (A))^{p^{k}}\gamma_2 (A).
$$
Further, there exists  $n_2\in \mathbb{N}$ such that
$$
[\gamma_{2} (A), {}_{n_2}G] \subseteq (\gamma_{2} (A))^{p^{k-1}}\gamma_3 (A).
$$
Hence,
$$
[(\gamma_{1} (A))^{p^{k-1}},{}_{n_1+n_2}G] \subseteq (\gamma_{1} (A))^{p^{k}} (\gamma_{2} (A))^{p^{k-1}}\gamma_3 (A).
$$
We see that there exist  $n_1,n_2,\ldots,n_k \in \mathbb{N}$ such that
$$
[(\gamma_{1} (A))^{p^{k-1}},{}_{s_k}G] \subseteq \gamma_{k+1}^{(p)} (A),
$$
where $s_k=n_1+n_2+\ldots+n_k$.

Since
$$
[(\gamma_{2} (A))^{p^{k-2}},{}_{s_k-n_1}G] \subseteq \gamma_{k+1}^{(p)} (A),
$$
$$
[(\gamma_{3} (A))^{p^{k-3}},{}_{s_k-n_1-n_2}G] \subseteq \gamma_{k+1}^{(p)} (A),
$$
$$
....................................
$$
$$
[(\gamma_{k-1} (A))^{p},{}_{n_k}G] \subseteq \gamma_{k+1}^{(p)} (A),
$$
we have
$$
[\gamma_{k}^{(p)} (A),{}_{s_k}G] \subseteq \gamma_{k+1}^{(p)} (A).
$$
It means that for  $m_k=m_{k-1}+s_k$ the following inclusion holds:
$$
\gamma_{m_k} (G) \subseteq \gamma_{k+1}^{(p)} (A).
$$
Since
$$
\gamma_{\omega} (G) \subseteq \gamma_{\omega}^{(p)} (A)=1,
$$
 $G$ is residually nilpotent.
\end{proof}

%%%%%%%%%%%%%%%%%%%%%%%%%%%%%%%%%%%%%%%%%%%%%%%%%%%%%%%%%%%%%%%%%%%%%%%%%%%%%%%%%%%%%%%%%%%%%%%%%

%\vspace{1cm}

\begin{example} \label{ExNres}
Let $\varepsilon_1,\ldots,\varepsilon_n=\pm 1$.
By Theorem \ref{t3.3} the following group
$$
G=\left\langle \, t, x_1,\ldots,x_n  \,  | \, t^{-1}x_it=x_i^{\varepsilon_i}, \,\,
               i=1,\ldots,n   \,   \right\rangle
$$
is residually nilpotent. Indeed, $G$ is a semi-direct product of free group
$F_n = \langle x_1,\ldots,x_n \rangle$ and infinite cyclic group
$\mathbb{Z} = \langle t \rangle$. Here $F_n$ is residually $p$-nilpotent,
$\mathbb{Z}$ is nilpotent. If we take $N=1$, $p=2$, then
for any $i = 1, 2, \ldots, n$,
$[x_i,t]=x_i^{-2} \,\, \mbox{or} \,\, [x_i,t]=1.$
It means that $\Delta_{\mathbb{Z}}(F_n^{ab}) \subseteq 2 F_n^{ab}$.
\end{example}

%%%%%%%%%%%%%%%%%%%%%%%%%%%%%%%%%%%%%%%%%%%%%%%%%%%%%%%%%%%%%%%%%%%%%%%%%%%%%
%%%%%%%%%%%%%%%%%%%%%%%%%%%%%%%%%%%%%%%%%%%%%%%%%%%%%%%%%%%%%%%%%%%%%%%%%%%%%
%%%%%%%%%%%%%%%%%%%%%%%%%%%%%%%%%%%%%%%%%%%%%%%%%%%%%%%%%%%%%%%%%%%%%%%%%%%%%
%%%%          On residually nilpotence of semi-direct products          %%%%%
%%%%%%%%%%%%%%%%%%%%%%%%%%%%%%%%%%%%%%%%%%%%%%%%%%%%%%%%%%%%%%%%%%%%%%%%%%%%%
%%%%%%%%%%%%%%%%%%%%%%%%%%%%%%%%%%%%%%%%%%%%%%%%%%%%%%%%%%%%%%%%%%%%%%%%%%%%%
%%%%%%%%%%%%%%%%%%%%%%%%%%%%%%%%%%%%%%%%%%%%%%%%%%%%%%%%%%%%%%%%%%%%%%%%%%%%%
%%%%%%%%%%%%%%%%%%%%%%%%%%%%%%%%%%%%%%%%%%%%%%%%%%%%%%%%%%%%%%%%%%%%%%%%%%%%%

\section{Residually nilpotence of semi-direct products} \label{s4}

In this section we shall assume that $G$ is a semi-direct product,
i.e. we have a split short exact sequence:
$$
1 \to A \to G  \to B \to 1.
$$

It is  known  \cite{FR1} that if $A$ and $B$ are residually nilpotent groups,
$G = A \rtimes_{\varphi} B$, where the action of $B$ on $A$ is trivial
by modulo of the commutator subgroup $\gamma_2 (A)$, then
$$
\gamma_n (G) = \gamma_n (A) \gamma_n (B),~\mbox{for all}~ n \geq 1,
$$
and $G$ is residually nilpotent.

The main purpose of the present  section is to prove the following theorem.

\begin{theorem} \label{t5.1}
If $A$ is residually $p$-nilpotent for some prime number $p$, $B$
is residually nilpotent, and $G = A \rtimes_{\varphi} B$,
where the action of $B$ on $A$ is trivial by modulo  $\gamma_2^{(p)} (A)$, then
$$
\gamma_k(G) = \gamma^{(p)}_k(A) \gamma_k(B),~~k \geq 1,
$$
and $G$ is residually nilpotent.
\end{theorem}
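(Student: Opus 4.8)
The plan is to prove by induction on $k$ the inclusion $\gamma_k(G)\subseteq\gamma_k^{(p)}(A)\,\gamma_k(B)$, to obtain the reverse inclusion essentially for free, and to read off residual nilpotence from the first inclusion. Throughout I use that $A\lhd G$, that each $\gamma_m^{(p)}(A)$ is characteristic in $A$ and hence normal in $G$, that $[\gamma_k^{(p)}(A),\gamma_l^{(p)}(A)]\subseteq\gamma_{k+l}^{(p)}(A)$ and $(\gamma_k^{(p)}(A))^{p}\subseteq\gamma_{k+1}^{(p)}(A)$ (both immediate from the definition of the lower $p$-series in Section~\ref{s2}), and that the hypothesis on the action says precisely $[A,B]\subseteq\gamma_2^{(p)}(A)$. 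The backbone of everything is the commutator estimate, which I would isolate as a lemma and call $(\ast)$:
\[
[\gamma_j^{(p)}(A),\,G]\subseteq\gamma_{j+1}^{(p)}(A)\qquad(j\geq1).
\]
Granting $(\ast)$, iterating it from $j=1$ gives $[A,{}_{k}G]\subseteq\gamma_{k+1}^{(p)}(A)$, and the inductive step for the first inclusion is then routine. An arbitrary generator $[x,g]$ of $\gamma_{k+1}(G)$ with $x\in\gamma_k(G)$, $g\in G$, can be written by the inductive hypothesis as $[uv,g]=[u,g]^{v}[v,g]$ with $u\in\gamma_k^{(p)}(A)$, $v\in\gamma_k(B)$; here $[u,g]\in\gamma_{k+1}^{(p)}(A)$ by $(\ast)$, and writing $g=ab$ we get $[v,g]=[v,b]\,[v,a]^{b}$ with $[v,b]\in\gamma_{k+1}(B)$ and $[v,a]\in[A,\gamma_k(G)]\subseteq[A,{}_{k}G]\subseteq\gamma_{k+1}^{(p)}(A)$ by Lemma~\ref{KhC}; normality of $\gamma_{k+1}^{(p)}(A)$ in $G$ absorbs the conjugations, so $[x,g]\in\gamma_{k+1}^{(p)}(A)\gamma_{k+1}(B)$, and since the latter is a subgroup this holds for all of $\gamma_{k+1}(G)$.

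The heart of the proof is $(\ast)$, which I would prove by induction on $j$; the case $j=1$ is the hypothesis together with $[A,A]\subseteq\gamma_2^{(p)}(A)$. For the step, use $\gamma_{j+1}^{(p)}(A)=[A,\gamma_j^{(p)}(A)]\,(\gamma_j^{(p)}(A))^{p}$, so it suffices to commute each generator $[a,c]$ ($a\in A$, $c\in\gamma_j^{(p)}(A)$) and each $p$-th power $d^{p}$ ($d\in\gamma_j^{(p)}(A)$) with $b\in B$ and land in $\gamma_{j+2}^{(p)}(A)$; the identities $[xy,b]=[x,b]^{y}[y,b]$ and $[x,ab]=[x,b]\,[x,a]^{b}$ then extend this to all of $\gamma_{j+1}^{(p)}(A)$ and to all of $G$. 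For $[a,c]$, apply the Hall--Witt identity~(\ref{f2}) with $x=a$, $y=c$, $z=b$: its factor $[[b,a],c^{b}]$ lies in $[\gamma_2^{(p)}(A),\gamma_j^{(p)}(A)]\subseteq\gamma_{j+2}^{(p)}(A)$ (since $[b,a]\in\gamma_2^{(p)}(A)$ and $c^{b}\in\gamma_j^{(p)}(A)$), its factor $[[c,b],a^{c}]$ lies in $[\gamma_{j+1}^{(p)}(A),A]\subseteq\gamma_{j+2}^{(p)}(A)$ (since $[c,b]\in\gamma_{j+1}^{(p)}(A)$ by the inductive hypothesis at level $j$), and passing from $[[a,c],b^{a}]$ to $[[a,c],b]$ via~(\ref{f1}) only introduces an extra commutator with $[b,a]\in\gamma_2^{(p)}(A)$, again inside $\gamma_{j+2}^{(p)}(A)$, and a conjugation by $[b,a]$, under which $\gamma_{j+2}^{(p)}(A)$ is invariant. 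For $d^{p}$, expand $[d^{p},b]=\prod_{i=0}^{p-1}[d,b]^{d^{i}}$; each $[d,b]\in\gamma_{j+1}^{(p)}(A)$ by the inductive hypothesis, and since $[\gamma_{j+1}^{(p)}(A),\gamma_j^{(p)}(A)]\subseteq\gamma_{2j+1}^{(p)}(A)\subseteq\gamma_{j+2}^{(p)}(A)$, modulo $\gamma_{j+2}^{(p)}(A)$ the product equals $[d,b]^{p}\in(\gamma_{j+1}^{(p)}(A))^{p}\subseteq\gamma_{j+2}^{(p)}(A)$. This closes the induction for $(\ast)$.

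For the reverse inclusion and residual nilpotence, first note the purely formal fact that in any split extension $\gamma_k(G)=(\gamma_k(G)\cap A)\,\gamma_k(B)$: the projection $G\to G/A$ carries $\gamma_k(G)$ onto $\gamma_k(G/A)$ and restricts to an isomorphism $\gamma_k(B)\to\gamma_k(G/A)$, so $\gamma_k(G)$ splits as $(\gamma_k(G)\cap A)\rtimes\gamma_k(B)$. Intersecting the first inclusion with $A$ gives $\gamma_k(G)\cap A\subseteq\gamma_k^{(p)}(A)$; for $\gamma_k^{(p)}(A)\subseteq\gamma_k(G)$ one argues by induction, the base case $\gamma_2^{(p)}(A)=[A,B]\,\gamma_2(A)\subseteq\gamma_2(G)$ being exactly where the hypothesis "action trivial modulo $\gamma_2^{(p)}(A)$" is used to realise the $p$-th powers of $A$ as commutators with $B$, and the step propagating this through $\gamma_{k+1}^{(p)}(A)=[A,\gamma_k^{(p)}(A)](\gamma_k^{(p)}(A))^{p}$. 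Combining yields $\gamma_k(G)=\gamma_k^{(p)}(A)\gamma_k(B)$. Finally, for residual nilpotence only the first inclusion is needed: if $g\in\gamma_\omega(G)\subseteq\bigcap_{k}\gamma_k^{(p)}(A)\gamma_k(B)$, write $g=a_kb_k$ with $a_k\in\gamma_k^{(p)}(A)\leq A$, $b_k\in\gamma_k(B)\leq B$; the image of $g$ in $G/A\cong B$ lies in $\bigcap_k\gamma_k(B)=\gamma_\omega(B)=1$, so $g\in A$, hence $b_k=a_k^{-1}g\in A\cap B=1$ and $g=a_k\in\bigcap_k\gamma_k^{(p)}(A)=\gamma_\omega^{(p)}(A)=1$.

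The single genuinely delicate ingredient is $(\ast)$, and within it the inductive step for the $p$-th-power generators of $\gamma_{j+1}^{(p)}(A)$: one must check that conjugating a $p$-th power from $\gamma_j^{(p)}(A)$ by an element of $B$ again yields a $p$-th power of a deeper term of the $p$-lower central series modulo higher terms, which is what forces the simultaneous use of $[\gamma_{j+1}^{(p)}(A),\gamma_j^{(p)}(A)]\subseteq\gamma_{j+2}^{(p)}(A)$ and $(\gamma_{j+1}^{(p)}(A))^{p}\subseteq\gamma_{j+2}^{(p)}(A)$. Everything else is bookkeeping with the commutator identities of Section~\ref{s2} and Lemma~\ref{KhC}.
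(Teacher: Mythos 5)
Your main argument is sound and runs essentially parallel to the paper's: your estimate $(\ast)$, namely $[\gamma_j^{(p)}(A),G]\subseteq\gamma_{j+1}^{(p)}(A)$, is in substance the $l=1$ case of the paper's Lemma \ref{l2}, proved by the same Hall--Witt computation (\ref{f2}), with your direct removal of the conjugating elements playing the role of the paper's Lemma \ref{l1}; where the paper runs a second induction to obtain $[\gamma_n^{(p)}(A),\gamma_l(B)]\leq\gamma_{n+l}^{(p)}(A)$, you instead invoke Lemma \ref{KhC} together with iteration of $(\ast)$ to control $[A,\gamma_k(G)]$, which is an equivalent piece of bookkeeping. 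Your explicit treatment of the $p$-th-power generators of $\gamma_{j+1}^{(p)}(A)$ (expanding $[d^p,b]$ as a product of conjugates of $[d,b]$ and reducing modulo $\gamma_{j+2}^{(p)}(A)$) is in fact more careful than the paper's, which passes over this point, and your deduction of residual nilpotence from the inclusion $\gamma_k(G)\subseteq\gamma_k^{(p)}(A)\gamma_k(B)$ is correct and more explicit than in the paper.

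The genuine gap is in your reverse inclusion $\gamma_k^{(p)}(A)\subseteq\gamma_k(G)$. Its base case rests on the equality $\gamma_2^{(p)}(A)=[A,B]\,\gamma_2(A)$, which does not follow from the hypothesis: ``the action of $B$ on $A$ is trivial modulo $\gamma_2^{(p)}(A)$'' means only $[A,B]\subseteq\gamma_2^{(p)}(A)$, an upper bound on $[A,B]$, and it provides no mechanism for realising the $p$-th powers $A^p$ inside $\gamma_2(G)$. In fact the reverse inclusion, and hence the equality asserted in the statement of Theorem \ref{t5.1}, fails in general: take the trivial action with $A=B=\mathbb{Z}$, so $G=\mathbb{Z}\times\mathbb{Z}$; then $\gamma_2(G)=1$ while $\gamma_2^{(p)}(A)\gamma_2(B)=p\mathbb{Z}\neq1$. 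Note that the paper's own proof likewise establishes only the inclusion $\gamma_k(G)\subseteq\gamma_k^{(p)}(A)\gamma_k(B)$ (its induction shows that the generators $[a_nb_n,ab]$ of $\gamma_{n+1}(G)$ lie in $\gamma_{n+1}^{(p)}(A)\gamma_{n+1}(B)$ and never addresses the opposite containment), and this inclusion is all that the residual nilpotence conclusion requires; so your error coincides with an overstatement in the theorem itself rather than with a missing idea in the part that matters. If you drop the reverse inclusion and state the result as an inclusion, your proof is complete.
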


To prove this theorem we will follow ideas from \cite{FR1}.
The proof of the following lemma is the same as proof of Lemma 3.2 from \cite{FR1}.

\begin{lemma}\label{l1}
1) Let $x \in \gamma^{(p)}_s(A)$, $y, w \in G$ and $[y, w] \in \gamma^{(p)}_q(A)$. Then
$$
[x, y] \in \gamma^{(p)}_{s+q}(A) \Leftrightarrow [x, y^w] \in \gamma^{(p)}_{s+q}(A).
$$

2) Let $x \in G$, $y, w \in G$ and $[x, y] \in \gamma^{(p)}_s(A)$, $[x, [y, w]] \in \gamma^{(p)}_{s+q}(A)$.
Then
$$
[x, y] \in \gamma^{(p)}_{s+q}(A) \Leftrightarrow [x, y^w] \in \gamma^{(p)}_{s+q}(A).
$$
\end{lemma}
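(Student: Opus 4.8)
The plan is to prove both parts by the same two–step mechanism: first rewrite $[x,y^{w}]$ as a product of $[x,[y,w]]$ with a conjugate of $[x,y]$, and then exploit that $N:=\gamma^{(p)}_{s+q}(A)$ is a \emph{normal} subgroup of $G$. The elementary observation that drives everything is that, since $[y,w]=y^{-1}y^{w}$, we have $y^{w}=y\,[y,w]$; substituting this and applying identity (\ref{f1}) with $(y,[y,w])$ in the second slot gives
$$
[x,y^{w}]=[x,y\,[y,w]]=[x,[y,w]]\cdot[x,y]^{[y,w]}.
$$
This single identity underlies both statements, the only difference between the two parts being how one checks that the first factor $[x,[y,w]]$ lies in $N$. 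Note also that $\gamma^{(p)}_{s+q}(A)$ is characteristic in $A$ and $A\lhd G$, so $N\lhd G$; in particular conjugation by any element of $G$ preserves membership in $N$.

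For part 1) I would argue that $x\in\gamma^{(p)}_{s}(A)$ and $[y,w]\in\gamma^{(p)}_{q}(A)$ are both elements of $A$, whence $[x,[y,w]]\in[\gamma^{(p)}_{s}(A),\gamma^{(p)}_{q}(A)]\subseteq\gamma^{(p)}_{s+q}(A)=N$, using the inclusion $[\gamma^{(p)}_{k}(A),\gamma^{(p)}_{l}(A)]\subseteq\gamma^{(p)}_{k+l}(A)$ recorded in Section \ref{s2}. For part 2) there is nothing to verify here, since $[x,[y,w]]\in\gamma^{(p)}_{s+q}(A)=N$ is assumed outright. Thus in both cases the displayed identity reads $[x,y^{w}]=n\cdot[x,y]^{[y,w]}$ with $n\in N$.

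It then remains to conclude the equivalence, and this is where the normality of $N$ does all the work. Because $[y,w]\in G$ and $N\lhd G$, we have $[x,y]^{[y,w]}\in N\Leftrightarrow[x,y]\in N$; and since $n\in N$, the relation $[x,y^{w}]=n\cdot[x,y]^{[y,w]}$ yields
$$
[x,y^{w}]\in N\ \Leftrightarrow\ [x,y]^{[y,w]}\in N\ \Leftrightarrow\ [x,y]\in N,
$$
which is exactly the asserted biconditional in each part. (Both directions are immediate: if $[x,y]\in N$ then $[x,y]^{[y,w]}\in N$ and $[x,y^{w}]=n\cdot(\text{element of }N)\in N$; conversely $[x,y]^{[y,w]}=n^{-1}[x,y^{w}]\in N$ forces $[x,y]\in N$.)

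The only genuine subtlety — and the step one is tempted to overcomplicate — is precisely this last reduction. A naive route would expand $[x,y]^{[y,w]}=[x,y]\,[[x,y],[y,w]]$ and then try to place $[[x,y],[y,w]]$ in $N$; but that commutator does not lie in $N$ on the strength of the stated hypotheses (in part 2) there is no control on $[y,w]$ at all), so this approach stalls. Recognising that the normality of $N$ lets the conjugation be absorbed for free is what keeps the argument short: indeed part 2) needs only $[x,[y,w]]\in N$ together with $N\lhd G$. The remaining care is purely bookkeeping, namely checking that (\ref{f1}) and the relation $y^{w}=y[y,w]$ are applied with the paper's conventions $[x,y]=x^{-1}y^{-1}xy$ and $a^{b}=b^{-1}ab$.
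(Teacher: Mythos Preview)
Your proof is correct and is essentially the standard Falk--Randell argument the paper defers to (it gives no proof of its own, citing \cite{FR1}, Lemma~3.2). The one-line identity $[x,y^{w}]=[x,[y,w]]\cdot[x,y]^{[y,w]}$ together with normality of $\gamma^{(p)}_{s+q}(A)$ in $G$ is precisely the mechanism. Incidentally, your argument shows that the hypothesis $[x,y]\in\gamma^{(p)}_{s}(A)$ in part~2) is not actually needed for the equivalence; it is carried along only for the way the lemma is invoked later.
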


Let us prove

\begin{lemma} \label{l2}
For any $n \geq 1$ holds
$$
[\gamma_n^{(p)}(A), \gamma_l(B)] \leq \gamma_{n+l}^{(p)}(A).
$$
\end{lemma}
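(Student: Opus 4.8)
The plan is to argue by induction on $n$, keeping $l$ free, and to exploit the characteristic-subgroup decomposition of $\gamma_n^{(p)}(A)$ together with Lemma \ref{KhC} and the hypothesis on the action of $B$. First recall that
$$
\gamma_n^{(p)}(A) = (\gamma_1(A))^{p^{n-1}} (\gamma_2(A))^{p^{n-2}} \cdots (\gamma_{n-1}(A))^{p} \gamma_n(A),
$$
and each factor $(\gamma_j(A))^{p^{n-j}}$ is characteristic in $A$, hence normal in $G$. Therefore it suffices to bound $[(\gamma_j(A))^{p^{n-j}}, \gamma_l(B)]$ inside $\gamma_{n+l}^{(p)}(A)$ for each $j = 1, \dots, n$; the product of these subgroups lies in $\gamma_{n+l}^{(p)}(A)$ because that group is itself a product of the relevant characteristic factors. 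For the top factor $\gamma_n(A)$ one applies Lemma \ref{KhC} (with the ambient group $G$, $A$ and $B$ both normal in $G$) to get $[\gamma_n(A), \gamma_l(B)] \leq [\gamma_n(A), {}_l B] \leq \gamma_{n+l}(A) \leq \gamma_{n+l}^{(p)}(A)$, using $\gamma_n(A) \leq A$ and that $[A, B] \leq \gamma_2^{(p)}(A)$ raises weight appropriately; more carefully, since the action of $B$ on $A$ is trivial modulo $\gamma_2^{(p)}(A)$, one iterates the commutator with elements of $B$ and tracks the $p$-weight.

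The heart of the matter is the factor $(\gamma_j(A))^{p^{n-j}}$ with $j < n$. Here the plan is to use the standard commutator expansion $[x^{p^{m}}, b] \equiv [x,b]^{p^m} \pmod{\text{higher terms}}$ together with the fact that $[x, b] \in \gamma_2^{(p)}(A) \gamma_{j+1}(A)$ for $x \in \gamma_j(A)$, $b \in B$ — which is where the hypothesis ``action trivial modulo $\gamma_2^{(p)}(A)$'' enters, combined with $x \in \gamma_j(A)$ forcing the correction terms into $\gamma_{j+1}(A)$ and the $p$-power structure forcing a factor of $p$. Iterating the commutator with $l$ elements of $\gamma_l(B)$ (or rather applying Lemma \ref{KhC} to reduce $\gamma_l(B)$ to ${}_l B$), each commutator step either adds one to the $A$-weight or multiplies by a further power of $p$ coming out of the $p^{n-j}$ exponent, and a bookkeeping argument shows the result lands in
$$
(\gamma_j(A))^{p^{n+l-j}} (\gamma_{j+1}(A))^{p^{n+l-j-1}} \cdots \gamma_{n+l}(A) \leq \gamma_{n+l}^{(p)}(A).
$$
A cleaner way to organize this, which I would prefer, is to induct on $n$: assuming $[\gamma_{n-1}^{(p)}(A), \gamma_l(B)] \leq \gamma_{n-1+l}^{(p)}(A)$ for all $l$, write $\gamma_n^{(p)}(A) = [A, \gamma_{n-1}^{(p)}(A)] (\gamma_{n-1}^{(p)}(A))^p$, expand $[[a, x], b]$ via the Hall--Witt identity \eqref{f2} and $[x^p, b]$ via \eqref{f1}, push the inner commutators into $A$, and apply the inductive hypothesis; the base case $n=1$ is $[A, \gamma_l(B)] \leq \gamma_{l+1}^{(p)}(A)$, which follows from the action being trivial modulo $\gamma_2^{(p)}(A)$ and another application of Lemma \ref{KhC}.

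The main obstacle I anticipate is the careful $p$-weight bookkeeping in the expansion of $[x^{p^{m}}, b]$: the congruence $[x^{p^m}, b] \equiv [x,b]^{p^m}$ holds only modulo a product of higher commutators of $x$ and $b$, and one must verify that every such correction term, after accounting for $x \in \gamma_j^{(p)}(A)$ and the repeated bracketing with $B$, still lies in $\gamma_{n+l}^{(p)}(A)$ — i.e. that the ``defect'' between the naive weight count and the true filtration degree never causes us to fall out of the target subgroup. This is exactly the kind of estimate carried out in \cite{FR1} for the ordinary lower central series, and the expectation is that the filtration identity $[\gamma_k^{(p)}(G), \gamma_l^{(p)}(G)] \leq \gamma_{k+l}^{(p)}(G)$ recalled in Section \ref{s2}, applied inside $A$, does most of the work once the $B$-commutators have been absorbed into $A$ via Lemma \ref{KhC} and Lemma \ref{l1}.
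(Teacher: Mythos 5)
Your plan assembles the right ingredients (the recursion $\gamma_n^{(p)}(A)=[A,\gamma_{n-1}^{(p)}(A)]\,(\gamma_{n-1}^{(p)}(A))^p$, the identities \eqref{f1}--\eqref{f2}, Lemma \ref{l1}), but as organized it has a genuine gap at both places where you invoke Lemma \ref{KhC}. That lemma requires \emph{both} subgroups to be normal in the ambient group, and in $G=A\rtimes_\varphi B$ the complement $B$ is not normal (unless the action is trivial), so the reduction of $\gamma_l(B)$ to iterated commutators $[\,\cdot\,,{}_{l}B]$ cannot be obtained by quoting it. In the paper this reduction is exactly what the second induction (on $l$) accomplishes: one writes $\gamma_l(B)=[\gamma_{l-1}(B),B]$, applies the Hall--Witt identity \eqref{f2} and uses Lemma \ref{l1} to strip the conjugating elements, working modulo the normal subgroup $\gamma_{n+l}^{(p)}(A)$ (which is characteristic in $A$, hence normal in $G$) -- in effect a three-subgroup argument, not Lemma \ref{KhC}. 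Moreover your displayed chain $[\gamma_n(A),{}_{l}B]\le\gamma_{n+l}(A)$ for the ``top factor'' is false: the hypothesis only gives $[A,B]\le\gamma_2^{(p)}(A)=A^p[A,A]$, and already in Example \ref{ExNres} one has $[x_i,t]=x_i^{-2}\notin\gamma_2(F_n)$; the correct target is $\gamma_{n+l}^{(p)}(A)$, and proving that inclusion is the content of the lemma itself, so this step of your plan is circular rather than a reduction.

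The second structural problem is the shape of your preferred induction. Inducting on $n$ alone with $l$ free makes the base case $[A,\gamma_l(B)]\le\gamma_{l+1}^{(p)}(A)$, which is not easier than the general statement: to pass from $[A,B]\le\gamma_2^{(p)}(A)$ to $[A,[B,B]]\le\gamma_3^{(p)}(A)$ you already need $[\gamma_2^{(p)}(A),B]\le\gamma_3^{(p)}(A)$, i.e.\ the case $n=2$, $l=1$, which is not available at that stage. The paper avoids this by a double induction: first on $n$ with $l=1$ (writing $\gamma_n^{(p)}(A)=(\gamma_{n-1}^{(p)}(A))^p[\gamma_{n-1}^{(p)}(A),A]$, expanding $[a_1^p[a_2,a],b]$ by \eqref{f1}, and treating $[[a_2,a],b]$ via \eqref{f2} and Lemma \ref{l1}), and then on $l$ for all $n$ simultaneously, where the inductive step uses both the $l=1$ case at level $n+l-1$ and the case $l-1$ at level $n+1$. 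If you restructure your argument in this order and drop the appeal to Lemma \ref{KhC} and the intermediate inclusion into $\gamma_{n+l}(A)$, the $p$-power bookkeeping you worry about becomes routine: modulo $\gamma_{n+1}^{(p)}(A)$ one has $[a_1^p,b]\equiv[a_1,b]^p$ with correction terms in $[\gamma_n^{(p)}(A),\gamma_{n-1}^{(p)}(A)]\le\gamma_{n+1}^{(p)}(A)$ and $[a_1,b]^p\in(\gamma_n^{(p)}(A))^p\le\gamma_{n+1}^{(p)}(A)$, by the inductive hypothesis at level $n-1$.
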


%{\bf Proof.}
\begin{proof}
Suppose that $l=1$ and using induction by $n$ we prove that
$$
[\gamma_n^{(p)}(A), B] \leq \gamma_{n+1}^{(p)}(A).
$$
If $n=1$ then $[A, B] \leq \gamma^{(p)}_2(A)$ that follows from assumption of theorem.
Let $n \geq 2$ and suppose that for all $r$ such that $1 \leq r \leq n-1$ holds
$$
[\gamma_r^{(p)}(A), B] \leq \gamma_{r+1}^{(p)}(A).
$$
Since
$$
[\gamma_n^{(p)}(A), B] = [(\gamma_{n-1}(A))^p [\gamma_{n-1}^{(p)}(A), A], B],
$$
it is enough to prove that
$$
[a_1^p [a_2, a], b] \in \gamma_{n+1}^{(p)}(A),~~~a_1, a_2 \in \gamma_{n-1}^{(p)}(A),~~a \in A, b\in B.
$$
Using the commutator identity (\ref{f1}), we get
$$
[a_1^p [a_2, a], b] = [a_1^p, b] \cdot [[a_1^p, b], [a_2, a]] \cdot [[a_2, a], b].
$$
Let us consider the commutator $[[a_2, a], b]$. By (\ref{f2})
$$
[a_2, a, b^{a_2}] [b, a_2, a^{b}] [a, b, a_2^{a}]  = 1.
$$
We have $a_2^a \in \gamma_{n-1}^{(p)}(A)$ and
$$
[a, b, a_2^{a}] \in [\gamma_{2}^{(p)}(A), \gamma_{n-1}^{(p)}(A)] \leq \gamma_{n+1}^{(p)}(A).
$$
Also by induction hypothesis
$$
[b, a_2] = [a_2, b]^{-1} \in \gamma_{n}^{(p)}(A).
$$
Hence
$$
[b, a_2, a] \in [\gamma_{n}^{(p)}(A), A] = \gamma_{n+1}^{(p)}(A).
$$
Since $[b, a_2] \in \gamma_{n}^{(p)}(A)$ and $a^b \in A$,
$$
[b, a_2, a^b] \in  \gamma_{n+1}^{(p)}(A).
$$
Then
$$
[a_2, a, b^{a_2}] \in  \gamma_{n+1}^{(p)}(A).
$$
Using  Lemma \ref{l1},  we get the need assertion.

Let us prove that
$$
[\gamma_n^{(p)}(A), \gamma_k(B)] \leq \gamma_{n+k}^{(p)}(A), ~~n, k \geq 1.
$$
Induction by $k$. We proved it for $k=1$. Suppose that
$$
[\gamma_n^{(p)}(A), \gamma_q(B)] \leq \gamma_{n+q}^{(p)}(A),
$$
for all $1 \leq q \leq k-1$ and $n \geq 1$.

Since
$$
[\gamma_n^{(p)}(A), \gamma_k(B)] = [\gamma_k(B), \gamma_n^{(p)}(A)] =
[[\gamma_{k-1}(B), B], \gamma_n^{(p)}(A)],
$$
it is enough to prove the inclusion
$$
[b_{k-1}, b, a] \in \gamma_{n+k}^{(p)}(A)
$$
for all $b_{k-1} \in \gamma_{k-1}(B)$, $b \in B$, and $a \in \gamma_n^{(p)}(A)$.
By the identity (\ref{f2})
$$
[b_{k-1}, b, a^{b_{k-1}}][a, b_{k-1}, b^a] [b, a, b_{k-1}^b]  = 1.
$$
By induction hypotheses
$$
[b, a, b_{k-1}^b] \in [\gamma_{n+1}^{(p)}(A), \gamma_{k-1}(B)] \leq \gamma_{n+k}^{(p)}(A).
$$
and
$$
[a, b_{k-1}, b] \in [\gamma_{n+k+1}^{(p)}(A), B] \leq \gamma_{n+k}^{(p)}(A).
$$
By Lemma \ref{l1}
$$
[a, b_{k-1}, b^a] \in \gamma_{n+k}^{(p)}(A)
$$
and we get
$$
[b_{k-1}, b, a^{b_{k-1}}] \in  \gamma_{n+k}^{(p)}(A).
$$
Also by Lemma \ref{l1}
$$
[b_{k-1}, b, a] \in  \gamma_{n+k}^{(p)}(A).
$$
\end{proof}

{\it Proof of Theorem \ref{t5.1}}
%{\bf Proof of Theorem \ref{t5.1}.}
 It is enough  to prove the equality
\begin{equation} \label{eq}
\gamma_k(G) = \gamma^{(p)}_k(A) \gamma_k(B),~~k \geq 1.
\end{equation}
We use induction by $k$. For $k=1$
$$
G = A B \Leftrightarrow \gamma_1(G) = \gamma^{(p)}_1(A) \gamma_1(B),
$$
i.e. the need equality follows from the definition.

By assumption, $A$ has the $p$-central series
$$
A = \gamma_1^{(p)}(A) \geq \gamma_2^{(p)}(A) = A^p [A, A] \geq \ldots
$$
such that
$$
[\gamma_k^{(p)}(A), \gamma_l^{(p)}(A)] \leq \gamma_{k+l}^{(p)}(A).
$$
Also by assumption
$$
[A, B] \leq \gamma_2^{(p)}(A).
$$

Let us prove that
$$
\gamma_{n+1}(G) = \gamma^{(p)}_{n+1}(A) \gamma_{n+1}(B),
$$
under the  assumption that for all $1 \leq r \leq n$ holds
$$
\gamma_{r}(G) = \gamma^{(p)}_{r}(A) \gamma_{r}(B).
$$
Take a commutator
$$
[a_n b_n, a b],~\mbox{where}~a_n \in \gamma^{(p)}_{n}(A),~b_n \in \gamma_{n}(B),~a \in A, b \in B.
$$
By the commutator identities
$$
[a_n b_n, a b] = [a_n, b]^{b_n} \cdot [a_n, a]^{ b b_n} \cdot [b_n, b] \cdot [b_n, a]^b.
$$
%Since $\gamma_{n+1}(B)$ is normal in $G$,
We get
$$
[a_n b_n, a b] = [a_n, b]^{b_n} \cdot [a_n, a]^{ b b_n} \cdot [b_n, a]^{b [b_n, b]^{-1}}[b_n, b].
$$
The commutator $[b_n, b]$ lies in $\gamma_{n+1}(B)$.
Let us show that all other commutators lie in $\gamma^{(p)}_{n+1}(A)$.
Indeed, by Lemma \ref{l1}
$$
[a_n, b] \in \gamma^{(p)}_{n+1}(A)
$$
and thus $[a_n, b]^{b_n} \in \gamma^{(p)}_{n+1}(A)$. By the same reason
$$
[b_n, a]^{b [b_n, b]^{-1}} = ([a, b_n]^{-1})^{b [b_n, b]^{-1}} \in \gamma^{(p)}_{n+1}(A).
$$
Since $A$ is residually $p$-nilpotent, then
$$
[a_n, a]^{b b_n} \in \gamma^{(p)}_{n+1}(A).
$$
This completes the proof.

%\newpage
%%%%%%%%%%%%%%%%%%%%%%%%%%%%%%%%%%%%%%%%%%%%%%%%%%%%%%%%%%%%%%%%%%%%%%%%%%%%%
%%%%%%%%%%%%%%%%%%%%%%%%%%%%%%%%%%%%%%%%%%%%%%%%%%%%%%%%%%%%%%%%%%%%%%%%%%%%%
%%%%%%%%%%%%%%%%%%%%%%%%%%%%%%%%%%%%%%%%%%%%%%%%%%%%%%%%%%%%%%%%%%%%%%%%%%%%%
%%%%%%%%%%%  Terms of the lower central series of semi-direct product%%%%%%%%
%%%%%%%%%%%%%%%%%%%%%%%%%%%%%%%%%%%%%%%%%%%%%%%%%%%%%%%%%%%%%%%%%%%%%%%%%%%%%
%%%%%%%%%%%%%%%%%%%%%%%%%%%%%%%%%%%%%%%%%%%%%%%%%%%%%%%%%%%%%%%%%%%%%%%%%%%%%
%%%%%%%%%%%%%%%%%%%%%%%%%%%%%%%%%%%%%%%%%%%%%%%%%%%%%%%%%%%%%%%%%%%%%%%%%%%%%
%%%%%%%%%%%%%%%%%%%%%%%%%%%%%%%%%%%%%%%%%%%%%%%%%%%%%%%%%%%%%%%%%%%%%%%%%%%%%

%\section{Terms of the lower central series of semi-direct product} \label{s5}

\bigskip

In \cite{G} for a semi-direct product $G = A \rtimes B$ was given the follow description of $\gamma_n(G)$.
Let $L_1 = A$, and if $n \geq 2$:
$$
L_n = \langle [A, \gamma_{n-1}(B)],~~[L_{n-1}, B], [A, L_{n-1}] \rangle.
$$
Then
$$
\gamma_n (G) = L_n  \rtimes \gamma_n (B).
$$

In particular,
$$
\gamma_2 (G) = \langle [A, B],  [A, A] \rangle  \rtimes \gamma_2 (B),
$$
$$
\gamma_3 (G) =
\langle [A, [B, B]], [\langle [A, B], [A, A] \rangle, B],~~[A, \langle [A, B], [A, A] \rangle]\rangle  \rtimes \gamma_3 (B).
$$

Let us show that for small $n$ it is possible to give simpler description of $\gamma_n(G)$. We shall use

\begin{lemma}
1) If  $H$ is a  normal subgroups of   $G$, then  $[A,H]$ is  normal in $G$;

2) The subgroup $[A, B]$ is a normal in $G$.
\end{lemma}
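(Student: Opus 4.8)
The plan is to prove both assertions by exhibiting suitable generating sets and verifying normality directly via the commutator identities collected in Section~\ref{s2}. For part~1), let $H \trianglelefteq G$. The subgroup $[A,H]$ is generated by the elements $[a,h]$ with $a \in A$, $h \in H$, so to see that it is normal it suffices to show that for every $g \in G$ the conjugate $[a,h]^g$ lies in $[A,H]$. First I would reduce to conjugation by a single generator, and then use the rewriting $[a,h]^g = [a^g, h^g]$. Here $h^g \in H$ because $H$ is normal, but $a^g$ need not lie in $A$. This is where I expect the main obstacle: one must use that $A$ is normal in $G$ (which holds since $A$ is the kernel of $G \to B$ in the short exact sequence), so that $a^g \in A$ as well; then $[a^g, h^g] \in [A,H]$ directly. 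So in fact the statement as used only needs $A \trianglelefteq G$, which is automatic in our setting, plus $H \trianglelefteq G$. I would spell this out: write $[a,h]^g = [a^g, h^g]$ with $a^g \in A$, $h^g \in H$, hence $[A,H]^g \subseteq [A,H]$, and since $[A,H]^{g^{-1}}\subseteq [A,H]$ as well, equality holds and $[A,H]$ is normal.

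For part~2), I would deduce it as a special case of part~1) by taking $H$ to be a normal subgroup of $G$ containing $B$ in a suitable sense; but $B$ itself is not a subgroup of $G$ in the extension picture (only in the split case), so instead I would argue intrinsically. Since $G = A\rtimes_\varphi B$ in this section, $B$ is a subgroup of $G$, but it is generally not normal, so part~1) does not apply verbatim. Instead, observe that $[A,B]$ is the smallest subgroup $N$ of $A$ with $A/N$ central in $G/N$ modulo $B$; more concretely, I would show $[A,B]\trianglelefteq G$ by checking closure under conjugation by $A$ and by $B$ separately, using $G = \langle A, B\rangle$. Conjugation by $b' \in B$: $[a,b]^{b'} = [a^{b'}, b^{b'}]$ with $a^{b'}\in A$ (as $A\trianglelefteq G$) and $b^{b'}\in B$, so this stays in $[A,B]$. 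Conjugation by $a' \in A$: using identity $[xy,z] = [x,z]^y[y,z]$ and its companions, write $[a,b]^{a'}$ in terms of $[a a', b]$, $[a',b]$, and commutators among these, all of which lie in $[A,B]$; alternatively note $[a,b]^{a'} = [a,b][\,[a,b],a']$ and $[\,[a,b],a'] \in [[A,B],A] \subseteq [A,A]\cdot$(stuff)\,—here the cleanest route is to observe $[a,b]^{a'}=[a',b]^{-1}[aa',b]\in [A,B]$ using $[aa',b]=[a',b]\,[a,b]^{a'}$ rearranged, which gives $[a,b]^{a'}=[a',b]^{-1}[aa',b]$, manifestly in $[A,B]$.

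The main obstacle, such as it is, is purely bookkeeping: keeping track of which factors land in $A$ versus $B$ after conjugation, and choosing the commutator identity that isolates the desired term. The key facts to invoke are that $A$ is normal in $G$ (from the exact sequence), that $G$ is generated by $A$ together with $B$, and the identities $[xy,z]=[x,z]^y[y,z]$ and $[x,yz]=[x,z][x,y]^z$ from Section~\ref{s2}. Once normality of $[A,B]$ in $G$ is established, the remark that $G/[A,B]$ decomposes as a direct product $A/[A,B] \times B$ (in the split case) follows formally, though that is not part of the statement to be proved. I would therefore present part~2) first as the concrete computation above, and part~1) as the short argument using normality of $A$, noting that the two together are exactly what is needed for the description of $\gamma_2(G)$ and $\gamma_3(G)$ in the sequel.
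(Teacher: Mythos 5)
Your proposal is correct and follows essentially the same route as the paper, whose entire proof consists of calling part~1) evident and citing for part~2) exactly the two identities you use: $[a,b]^{b_1}=[a^{b_1},b^{b_1}]$ for conjugation by $B$ and the rearrangement of $[aa_1,b]$ for conjugation by $A$ (together with normality of $A$ in $G$). One small slip: from $[xy,z]=[x,z]^y[y,z]$ one gets $[aa',b]=[a,b]^{a'}[a',b]$, hence $[a,b]^{a'}=[aa',b][a',b]^{-1}$, not $[a',b]^{-1}[aa',b]$ as you wrote; the factors are transposed, but since either expression visibly lies in the subgroup $[A,B]$, the conclusion is unaffected.
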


\begin{proof}
1) Is evident.

2) Follows from the commutator identities:
$$
[a,b]^{a_1}=[aa_1,b][a_1,b]^{-1},\quad [a,b]^{b_1}=[a^{b_1},b^{b_1}].
$$
\end{proof}

%\vspace{1cm}

%\begin{question}
%Give example of $G$ that is a split extension of $A$ by $B$
%and its normal subgroup $H$ such that  $[H,B]$ is not normal in  $G$.
%\end{question}

\begin{prop}
If $G = A \rtimes B$, then
$$
\gamma_2(G)=\gamma_2 (A) [A,B]  \rtimes \gamma_2 (B),
$$
$$
\gamma_3(G)=\gamma_3(A)   [A,B,A][B,B,A] [A,A,B][A,B,B] \rtimes \gamma_3 (B).
$$
\end{prop}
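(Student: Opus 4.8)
The plan is to read off both identities from the description of $\gamma_n(G)$ for split extensions recalled just before the statement: $\gamma_n(G)=L_n\rtimes\gamma_n(B)$, where $L_1=A$ and $L_n=\langle[A,\gamma_{n-1}(B)],\ [L_{n-1},B],\ [A,L_{n-1}]\rangle$ for $n\ge2$. It therefore suffices to show $L_2=\gamma_2(A)[A,B]$ and $L_3=\gamma_3(A)[A,B,A][B,B,A][A,A,B][A,B,B]$. Two elementary facts are used throughout. First, $[X,Y]=[Y,X]$ as subgroups, so in particular $[B,A]=[A,B]$, and hence weight-three subgroups such as $[B,A,A]=[[A,B],A]=[A,B,A]$ and $[B,A,B]=[[A,B],B]=[A,B,B]$ collapse onto ones already on the list. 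Second, if $M,N\trianglelefteq G$, then from the identity $[x,yz]=[x,z][x,y]^z$ together with part~1 of the lemma above (which gives $[A,M]\trianglelefteq G$) one gets $[A,MN]=[A,M][A,N]$, and more generally commutators distribute over products of subgroups that are normal in $G$.

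For $n=2$ this is immediate: from the definition $L_2=\langle[A,\gamma_1(B)],[L_1,B],[A,L_1]\rangle=\langle[A,B],[A,A]\rangle$. By the lemma $[A,B]\trianglelefteq G$, and $\gamma_2(A)=[A,A]$, being characteristic in $A\trianglelefteq G$, is normal in $G$; hence $\langle[A,B],[A,A]\rangle=\gamma_2(A)[A,B]$ is a subgroup, and $\gamma_2(G)=\gamma_2(A)[A,B]\rtimes\gamma_2(B)$.

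For $n=3$ I would substitute $L_2=\gamma_2(A)[A,B]$ into $L_3=\langle[A,\gamma_2(B)],[L_2,B],[A,L_2]\rangle$ and treat the three generating subgroups in turn. Since $\gamma_2(A),[A,B]\trianglelefteq G$, the distribution fact gives $[A,L_2]=[A,\gamma_2(A)]\,[A,[A,B]]=\gamma_3(A)\,[A,B,A]$, using $[A,[A,B]]=[[A,B],A]=[A,B,A]$. For $[L_2,B]$, where $B$ is not normal, I expand $[mn,b]=[m,b]^n[n,b]$ with $m\in\gamma_2(A)$, $n\in[A,B]$: here $[n,b]\in[[A,B],B]$, while $[m,b]^n=[m,b]\,[[m,b],n]$ with $[[m,b],n]\in[[\gamma_2(A),B],A]\subseteq[\gamma_2(A),A]=\gamma_3(A)$ (using $[\gamma_2(A),B]\subseteq\gamma_2(A)$ and $[A,B]\subseteq A$), so $[L_2,B]\subseteq\langle\gamma_3(A),\ [\gamma_2(A),B],\ [[A,B],B]\rangle$, which contributes $[A,A,B]$ and $[A,B,B]$ — any deeper terms that arise, such as $[[A,B],[A,B]]$, being contained in $[[A,B],A]=[A,B,A]$ because $[A,B]\subseteq A$. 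Finally $[A,\gamma_2(B)]$ contributes $[B,B,A]$, since $[B,B,A]=[[B,B],A]\subseteq[\gamma_2(B),A]=[A,\gamma_2(B)]$; the reverse-ordering terms produced here are absorbed into the five listed subgroups by a (somewhat intricate) induction on commutator weight using $[B,A]=[A,B]$, $[A,B]\subseteq A$, Lemma~\ref{KhC}, and the Hall--Witt identity (\ref{f2}) in subgroup form $[X,Y,Z]\subseteq[Y,Z,X][Z,X,Y]$. Collecting the pieces yields $L_3=\langle\gamma_3(A),[A,B,A],[B,B,A],[A,A,B],[A,B,B]\rangle$, which equals the stated product because $L_3=\gamma_3(G)\cap A$ is normal in $G$ (so the juxtaposition of the five factors is already a subgroup), giving $\gamma_3(G)=\gamma_3(A)[A,B,A][B,B,A][A,A,B][A,B,B]\rtimes\gamma_3(B)$.

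The routine parts are the commutator-distribution identities and the one-line containments above. The main obstacle is the bookkeeping forced by $B$ not being normal in $G$: every time a commutator is pushed past a conjugation, extra weight-three terms (and their deeper consequences) appear, and one must verify that each of them — and each iterated-commutator subgroup of ``mixed'' type produced along the way, e.g.\ $[B,A,A]$, $[B,A,B]$, $[B,B,A]$ — already lies inside the five listed subgroups. The identities $[B,A]=[A,B]$ and (\ref{f2}), together with $[A,B]\subseteq A$, are exactly what make this closure work; the delicate point is keeping straight, at each step, which of $[A,A,B]$ versus $[A,B,A]$ (respectively $[A,B,B]$ versus $[B,B,A]$) is the one actually needed.
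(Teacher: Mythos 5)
Your route is genuinely different from the paper's: the paper never uses the recursive description $\gamma_n(G)=L_n\rtimes\gamma_n(B)$ in its proof; it establishes the $n=2$ formula by expanding $[a_1a_2,b_1b_2]$ directly, and then obtains $n=3$ from $\gamma_3(G)=[\gamma_2(G),AB]=[\gamma_2(G),A][\gamma_2(G),B]$, computing each factor by expanding $[xyz,a]$ and $[xyz,b]$ with $x\in\gamma_2(A)$, $y\in[A,B]$, $z\in\gamma_2(B)$, and finally cancelling a repeated factor $[A,B,A]$ by normality. Your plan of simplifying $L_2$ and $L_3$ in the quoted recursion is legitimate (the recursion is stated in the paper as known), and your three computations $[A,L_2]=\gamma_3(A)[A,B,A]$, $[L_2,B]\subseteq\langle\gamma_3(A),[A,A,B],[A,B,B]\rangle$, $[A,\gamma_2(B)]=[B,B,A]$ are correct; with the obvious reverse containments they give $L_3=\langle\gamma_3(A),[A,B,A],[B,B,A],[A,A,B],[A,B,B]\rangle$, with somewhat lighter bookkeeping than the paper's (no extra $[A,B,A]$ to remove at the end). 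One remark: since $[B,B]=\gamma_2(B)$, the third item is an identity of subgroups, so the ``reverse-ordering terms'' you worry about there do not exist, and the appeal to a ``subgroup form'' $[X,Y,Z]\subseteq[Y,Z,X][Z,X,Y]$ of (\ref{f2}) should be dropped: that containment is not a valid identity in general (a three-subgroup-lemma statement needs a normal subgroup on the right), but you never actually need it.

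The one real gap is the final step. The proposition asserts equality with the \emph{product} of the five subgroups, and you pass from the generated subgroup to the product by saying that $L_3=\gamma_3(G)\cap A$ is normal in $G$, ``so the juxtaposition of the five factors is already a subgroup.'' That is a non sequitur: normality of the generated subgroup does not imply that the product set of the five factors exhausts it, and three of the factors ($[B,B,A]$, $[A,A,B]$, $[A,B,B]$) are in general not normal in $G$, as the paper notes in the remark right after the proposition. The step is true and easy to repair: $N=\gamma_3(A)[A,B,A]$ is normal in $G$, and modulo $N$ the remaining three factors commute elementwise, since $[[B,B,A],[A,A,B]]\leq[A,\gamma_2(A)]=\gamma_3(A)$, $[[A,A,B],[A,B,B]]\leq[\gamma_2(A),A]=\gamma_3(A)$ and $[[B,B,A],[A,B,B]]\leq[A,[A,B]]=[A,B,A]$ (using $[B,B,A]\leq A$, $[A,A,B]\leq\gamma_2(A)$, $[A,B,B]\leq[A,B]$); hence $N[B,B,A][A,A,B][A,B,B]$ is a subgroup and therefore equals $L_3$. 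With that inserted your argument is complete; its cost, compared with the paper's self-contained computation, is the reliance on the quoted description of $\gamma_n(G)$ for split extensions.
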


\begin{proof}
The first equality follows from the identity
$$
[a_1a_2,b_1b_2]=[a_1,a_2]^{b_2b_1} [a_1,b_2]^{b_1[a_1,a_2]^{b_2b_1}} [b_1,a_2]^{b_2[b_2,b_1]}  [b_1,b_2].
$$

Let us prove  the second equality.
At first,
$$
\gamma_3(G)=[\gamma_2(G),AB]=[\gamma_2(G),A][\gamma_2(G),B],
$$
since $[\gamma_2(G),A]\unlhd G$ and
$$
[w,ab]=[w,b] [w,a]^{b}=  [w,a]^{b[b,w]}[w,b]
$$
for every $w\in \gamma_2(G)$, $a\in A$, $b\in B$.

Secondly,
$$
[\gamma_2(G),A]=\gamma_3(A)[A,B,A][B,B,A],
$$
since
$$
[xyz,a]=[x,a]^{yz}[y,a]^{z}[z,a]
$$
and if $x\in \gamma_2(A)$, $y\in [A,B]$, $z\in [B,B]$, then
$$
[x,a]\in \gamma_3(A), \quad [y,a]\in [A,B,A], \quad [z,a]\in [B,B,A]
$$
and moreover $\gamma_3(A), [A,B,A] \unlhd G$.

Next,
$$
[\gamma_2(G),B]\subseteq \gamma_3(A) [A,A,B] [A,B,A][A,B,B]\gamma_3(B),
$$
since
$$
[xyz,b]=[x,b]^{yz}[y,b]^{z}[z,b]
$$
and if $x\in \gamma_2(A)$, $y\in [A,B]$, $z\in [B,B]$, then
$$
[x,b]^{yz}=[x,b]^{z}[[x,b]^{z},y^z] \in [A,A,B] [A,B,A],
$$
$$
[y,b]^{z}\in  [A,B,B], \quad [z,b]\in \gamma_3(B).
$$
Thus
$$
[xyz,b]\in [A,A,B] [A,B,A] [A,B,B] \gamma_3(B).
$$
Hence
$$
\gamma_3(G)=\gamma_3(A)   [A,B,A][B,B,A] [A,A,B] [A,B,A]  [A,B,B] \rtimes \gamma_3 (B).
$$
Since $[A,B,A]\unlhd G$, then
$$
\gamma_3(G)\subseteq \gamma_3(A)  [A,B,A][B,B,A] [A,A,B]  [A,B,B] \rtimes \gamma_3 (B).
$$
The opposite inclusion is evident.
\end{proof}

\begin{remark}
In general case subgroups  $[B,B,A]$, $[A,A,B]$, $[A,B,B]$, $\gamma_3 (B)$, are not normal in $G$.
\end{remark}

\medskip

In \cite{GM} was found  the lower central series of the pure braid group $P_2(\mathbb{K})$ of the  Klein bottle $\mathbb{K}$. Using our approach we can prove

 \begin{prop}[\cite{GM}]
$P_2 (\mathbb{K})$ is residually nilpotent.
 \end{prop}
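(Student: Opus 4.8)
The plan is to realize the pure braid group $P_2(\mathbb{K})$ as a semidirect product $F_2 \rtimes_{\varphi} \pi$ (a surface-group-by-surface-group or free-by-something extension) using the Fadell--Neuwirth fibration for the configuration space of the Klein bottle, and then to verify the hypotheses of one of the theorems proved above --- most naturally Theorem \ref{t3.3} or Theorem \ref{t5.1} --- for this decomposition. Concretely, the first step is to write down the explicit presentation of $P_2(\mathbb{K})$ from \cite{GM}, identify the normal subgroup $A$ (the fiber, a free group $F_2$ coming from the second point moving on the once-punctured Klein bottle) and the quotient $B$ (the fundamental group of the Klein bottle $\mathbb{K}$ itself), and read off the action of $B$ on $A$ by conjugation. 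Since $\pi_1(\mathbb{K})$ is not nilpotent, Theorem \ref{t3.1} does not apply directly; however $\pi_1(\mathbb{K}) = \langle a, b \mid abab^{-1}\rangle$ is residually $2$-finite (indeed residually $2$-nilpotent, being $\mathbb{Z} \rtimes \mathbb{Z}$ with the action $x \mapsto -x$, which by Proposition \ref{MFcri}(b) with $p_1(x) = x+1$, $p_1(1) = 2 \in 2\mathbb{Z}$, is residually $2$-finite), so the correct tool should be a hybrid of Theorems \ref{t3.3} and \ref{t5.1}.

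The second, and central, step is to establish the module-theoretic condition on $\overline{A} = A^{ab} = \mathbb{Z}^2$ as a $\mathbb{Z}[B]$-module: I expect to show that the action of $B$ on $\overline{A}$ satisfies $\Delta_B^N \overline{A} \subseteq 2\overline{A}$ for some $N$, equivalently that modulo $2$ the generators of $\Delta_B$ act trivially (unipotently) on $\overline{A}/2\overline{A}$. This is a finite computation with $2\times 2$ integer matrices: one computes the matrices by which the two generators of $\pi_1(\mathbb{K})$ act on $H_1$ of the fiber, reduces mod $2$, and checks that $[\varphi]-E$ (for each generator) is nilpotent modulo $2$, or more simply that all entries of $[\varphi]-E$ are even or the matrix is conjugate over $\mathbb{F}_2$ to a unipotent one. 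If that holds, then $\overline{A}$ is residually $2$ in the appropriate sense and one invokes Theorem \ref{t3.3} (with $A = F_2$ residually $2$-nilpotent and $B = \pi_1(\mathbb{K})$ --- but here $B$ is not nilpotent, so strictly one must instead run the argument of Theorem \ref{t5.1}/\ref{t3.3} with $B$ only residually nilpotent, using that $\pi_1(\mathbb{K})$ is itself residually $2$-finite so that the lower $p$-series argument closes up). Alternatively, one can pass to a further splitting $P_2(\mathbb{K}) = (F_2 \rtimes \mathbb{Z}) \rtimes \mathbb{Z}$ and apply the free-by-infinite-cyclic results (e.g. Theorem \ref{Fdres1}) twice, iterating Theorem \ref{t3.3} once the inner group is known to be residually $2$-finite.

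The main obstacle, I expect, is precisely that $B = \pi_1(\mathbb{K})$ is residually nilpotent but \emph{not} nilpotent, so none of Theorems \ref{t3.1}--\ref{t3.3} applies off the shelf; one must either (i) decompose $B$ further as $\mathbb{Z} \rtimes \mathbb{Z}$ and nest the extensions, applying Theorem \ref{t3.3} to the inner $F_2 \rtimes \mathbb{Z}$ (checking the mod-$2$ condition on the relevant matrix there) and then again to the outer $\mathbb{Z}$-extension, or (ii) recast the whole group as $F_2 \rtimes_{\varphi} \mathbb{Z}^{?}$ or reduce to a single free-by-(residually-$p$) extension and quote Theorem \ref{t5.1} in the generality stated, where $B$ is merely residually nilpotent. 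I would choose route (i): write $P_2(\mathbb{K}) = (F_2 \rtimes_{\psi} \mathbb{Z}) \rtimes_{\chi} \mathbb{Z}$, first show $H := F_2 \rtimes_{\psi}\mathbb{Z}$ is residually $2$-finite via Theorem \ref{Fdres1} or Theorem \ref{t3.3} (verifying $\Delta_{\mathbb{Z}}(F_2^{ab}) \subseteq 2 F_2^{ab}$, i.e. that $[\psi]-E$ has all entries even, which is the crux mod-$2$ matrix check), and then show $H \rtimes_{\chi}\mathbb{Z}$ is residually nilpotent by a second application of Theorem \ref{t3.3} with $A = H$ residually $2$-nilpotent (finitely generated residually $2$-finite, and in fact residually $2$-nilpotent), $B = \mathbb{Z}$ nilpotent, verifying $\Delta_{\mathbb{Z}} \overline{H} \subseteq 2\overline{H}$ from the induced action on $H^{ab}$. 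The bookkeeping of which commutator relations in the \cite{GM} presentation give the two actions, and confirming the two parity conditions, is the only real work; everything else is a citation.
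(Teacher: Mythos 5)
Your plan is essentially correct, but you choose a different route from the paper. The paper takes exactly your ``route (ii)'': it keeps the single splitting $P_2(\mathbb{K}) = F_2 \rtimes_{\varphi} s(P_1)$ with $s(P_1)\cong\pi_1(\mathbb{K})$, computes that conjugation by the two generators $a_1a_2$ and $b_2b_1$ acts on $F_2^{ab}$ by the matrices $\left(\begin{smallmatrix}1&0\\-2&1\end{smallmatrix}\right)$ and $\left(\begin{smallmatrix}-1&0\\2&1\end{smallmatrix}\right)$, both congruent to the identity mod $2$, so the action of $B=\pi_1(\mathbb{K})$ is trivial modulo $\gamma_2^{(2)}(F_2)$, and then cites Theorem \ref{t5.1} directly --- note that Theorem \ref{t5.1} is stated for $B$ merely \emph{residually} nilpotent, so your worry that no theorem applies ``off the shelf'' to the non-nilpotent Klein bottle group is unfounded; no hybrid of Theorems \ref{t3.3} and \ref{t5.1} is needed. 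Your preferred route (i), nesting $P_2(\mathbb{K})=(F_2\rtimes_{\psi}\mathbb{Z})\rtimes_{\chi}\mathbb{Z}$, also works: $[\overline{\psi}]-E=\left(\begin{smallmatrix}0&0\\-2&0\end{smallmatrix}\right)$ is even, the induced action of $b_2b_1$ on $H^{ab}\cong\mathbb{Z}_2\oplus\mathbb{Z}\oplus\mathbb{Z}$ is the identity on the first two summands and $-1$ on the last, so $\Delta_{\mathbb{Z}}\overline{H}\subseteq 2\overline{H}$, and Theorem \ref{t3.3} closes the outer step. One caution there: for the inner group $H=F_2\rtimes_{\psi}\mathbb{Z}$ you must use Theorem \ref{Fdres1}(a) (eigenvalues $1,1$, hence residually $2$-finite, hence residually $2$-nilpotent), not Theorem \ref{t3.3} alone, since the latter only yields residual nilpotence of $H$, which is not enough to feed condition 1) of Theorem \ref{t3.3} in the second application. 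The trade-off is clear: the paper's route is a one-line citation once the two matrices are reduced mod $2$, while yours requires the extra bookkeeping of the intermediate abelianization $H^{ab}$ (with its $2$-torsion) but avoids invoking Theorem \ref{t5.1} in its full generality.
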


  \begin{proof}

%\bigskip

It is known  that $P_2 (\mathbb{K}) = \langle a_1, b_1, a_2, b_2 \rangle = F_2 \rtimes_{\varphi} s(P_1)$,
where $F_2 = \langle a_2, b_2 \rangle$ is the free group,
$$
s(P_1) = \langle a_1 a_2,  b_1 b_2~|~(b_2 b_1) (a_1 a_2) = (a_1 a_2)^{-1} (b_2 b_1) \rangle
$$
and the action of $s(P_1)$ on $F_2$ is defined by the rules
$$
a_1 a_2 :
\left\{%
\begin{array}{l}
  a_2 \to a_2, \\
  b_2 \to a_2^{-2} b_2, \\
\end{array}%
\right.~~~~
b_2 b_1 :
\left\{%
\begin{array}{l}
  a_2 \to a_2^{-1}, \\
  b_2 \to a_2 b_2 a_2. \\
\end{array}%
\right.~~~~
$$
Hence, $P_2 (\mathbb{K}) = F_2 \rtimes_{\varphi} F_2$.
It is easy to see that conjugations by $a_1 a_2$ and $b_2 b_1$
induce the following automorphisms of $F_2^{ab}$:
$$
[\overline{a_1 a_2}] =
\left(
\begin{array}{cc}
 1 & 0 \\
-2 & 1 \\
\end{array}%
\right),~~~
[\overline{b_2 b_1}] =
\left(
\begin{array}{cc}
 -1 & 0 \\
2 & 1 \\
\end{array}%
\right).
$$
Theorem \ref{t5.1} implies that $P_2 (\mathbb{K})$ is residually nilpotent.
  \end{proof}

%%%%%%%%%%%%%%%%%%%%%%%%%%%%%%%%%%%%%%%%%%%%%%%%%%%%%%%%%%%%%%%%%%%%%%%%%%%%%
%%%%%%%%%%%%%%%%%%%%%%%%%%%%%%%%%%%%%%%%%%%%%%%%%%%%%%%%%%%%%%%%%%%%%%%%%%%%%
%%%%%%%%%%%%%%%%%%%%%%%%%%%%%%%%%%%%%%%%%%%%%%%%%%%%%%%%%%%%%%%%%%%%%%%%%%%%%
%%%%%%%%%%%                 Mapping torus               %%%%%%%%%%%%%%%%%%%%%
%%%%%%%%%%%%%%%%%%%%%%%%%%%%%%%%%%%%%%%%%%%%%%%%%%%%%%%%%%%%%%%%%%%%%%%%%%%%%
%%%%%%%%%%%%%%%%%%%%%%%%%%%%%%%%%%%%%%%%%%%%%%%%%%%%%%%%%%%%%%%%%%%%%%%%%%%%%
%%%%%%%%%%%%%%%%%%%%%%%%%%%%%%%%%%%%%%%%%%%%%%%%%%%%%%%%%%%%%%%%%%%%%%%%%%%%%
%%%%%%%%%%%%%%%%%%%%%%%%%%%%%%%%%%%%%%%%%%%%%%%%%%%%%%%%%%%%%%%%%%%%%%%%%%%%%

\bigskip

\section{Free-by-cyclic groups $F_n \rtimes_{\varphi} \mathbb{Z}$}\label{FnZ}

In this section we consider the semi-direct product of the free group
$F_n = \langle x_1, x_2, \ldots, x_n \rangle$ and the infinite cyclic group
$\mathbb{Z} = \langle t \rangle$, where the conjugation by $t$ is induced
by automorphism $\varphi \in \mathrm{Aut}(F_n)$
$$
F_n \rtimes_{\varphi} \mathbb{Z} = \langle x_1, x_2, \ldots, x_n, t~|~t^{-1} x_i t = \varphi(x_i),~~i = 1, 2, \ldots, n \rangle.
$$

The automorphism $\varphi$ induces an automorphism of the
abelianization $F_n^{ab} = F_n / \gamma_{2}(F_n)$ that is free $\mathbb{Z}$-module.
We will denote this automorphism by $\overline{\varphi}$
and its matrix by $[\overline{\varphi}]$.
Denote by   $\overline{G}_k = (F^{ab}_{n})^{\otimes k}\rtimes_{\overline{\varphi}_{k}}\mathbb{Z}$,   $k\geq1$,
where  the automorphism  $\overline{\varphi}_{k}\in\mathrm{Aut}\left((F^{ab}_{n})^{\otimes k}\right)$
is induced by automorphism   $\varphi$. Hence, $\overline{\varphi}_{1} = \overline{\varphi}$.

Also, we will consider groups $\widehat{G}_k = \gamma_{k}(F_{n})/\gamma_{k+1}(F_{n})\rtimes_{\widehat{\varphi}_{k}}\mathbb{Z},$ $k\geq1,$
where $\widehat{\varphi}_{k}$ is the automorphism of  $\mathbb{Z}$-module
$\gamma_{k}(F_{n})/\gamma_{k+1}(F_{n})$ that is induced by the automorphism  $\varphi$.

For any $k\geq1,$ the pair of matrices  $[\overline{\varphi}_{k}]$ and $[\widehat{\varphi}_{k}]$ have the following property.

\begin{prop}\label{ChP}
Let $P_{\overline{\varphi}_{k}}(x)$ and  $P_{\widehat{\varphi}_{k}}(x)\in\mathbb{Z}[x]$
be characteristic polynomials of $[\overline{\varphi}_{k}]$ and $[\widehat{\varphi}_{k}]$,
respectively. Then the set of irreducible over  $\mathbb{Z}$  factors
of $P_{\widehat{\varphi}_{k}}(x)$ is a subset of the irreducible over
$\mathbb{Z}$ factors of $P_{\overline{\varphi}_{k}}(x)$.
\end{prop}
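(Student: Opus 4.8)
The plan is to use the natural $\varphi$-equivariant surjection furnished by Proposition~\ref{TtoSec} and then read off the divisibility of the characteristic polynomials from a short exact sequence of $\mathbb{Z}[\langle t \rangle]$-modules.

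First I would record that the homomorphism $\theta : (F^{ab}_n)^{\otimes k} \to \gamma_k(F_n)/\gamma_{k+1}(F_n)$ of Proposition~\ref{TtoSec}, given by $\overline{y}_1 \otimes \cdots \otimes \overline{y}_k \mapsto [y_1, \ldots, y_k]\gamma_{k+1}(F_n)$, is surjective; this is precisely the observation, made already in the paragraph following Proposition~\ref{TtoSec}, that $\gamma_k(F_n)/\gamma_{k+1}(F_n)$ is a homomorphic image of $(F^{ab}_n)^{\otimes k}$. Moreover $\theta$ intertwines the two actions of $\varphi$: since $\overline{\varphi(y)} = \overline{\varphi}(\overline{y})$ in $F^{ab}_n$ and $\varphi[y_1, \ldots, y_k] = [\varphi(y_1), \ldots, \varphi(y_k)]$, one computes directly that $\theta \circ \overline{\varphi}_k = \widehat{\varphi}_k \circ \theta$. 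Hence $\gamma_k(F_n)/\gamma_{k+1}(F_n)$ is a quotient of $(F^{ab}_n)^{\otimes k}$ as a module over $\mathbb{Z}[\langle t \rangle]$, equivalently over $\mathbb{Z}[x]$ with $x$ acting by the relevant automorphism. Both are finitely generated free abelian groups --- the tensor power obviously, the lower central quotient by the Magnus--Witt theorem, as is already implicit in the definition of $[\widehat{\varphi}_k]$ --- and $K := \ker \theta$ is a $t$-invariant subgroup, so I get a short exact sequence of $\mathbb{Z}[\langle t \rangle]$-modules
$$
0 \to K \to (F^{ab}_n)^{\otimes k} \to \gamma_k(F_n)/\gamma_{k+1}(F_n) \to 0 .
$$

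Next I would extract the characteristic polynomials. Since the quotient is free abelian the sequence splits as abelian groups, so one may choose a $\mathbb{Z}$-basis of $(F^{ab}_n)^{\otimes k}$ a part of which is a basis of $K$; because $t$ preserves $K$, the matrix of $t$ in such a basis is block upper-triangular, with diagonal blocks representing the action of $t$ on $K$ and the induced action on the quotient, that is $[\widehat{\varphi}_k]$. Taking the characteristic polynomial of this block-triangular matrix yields
$$
P_{\overline{\varphi}_k}(x) = P_K(x) \cdot P_{\widehat{\varphi}_k}(x)
$$
in $\mathbb{Z}[x]$, where $P_K(x)$ is the characteristic polynomial of $t$ acting on $K$. (If one prefers to avoid the splitting, tensor the exact sequence with the flat $\mathbb{Z}$-module $\mathbb{Q}$ and use multiplicativity of characteristic polynomials along the resulting short exact sequence of $\mathbb{Q}[x]$-modules.) In particular $P_{\widehat{\varphi}_k}(x)$ divides $P_{\overline{\varphi}_k}(x)$ in $\mathbb{Z}[x]$, so every irreducible-over-$\mathbb{Z}$ divisor of $P_{\widehat{\varphi}_k}$ occurs among the irreducible-over-$\mathbb{Z}$ divisors of $P_{\overline{\varphi}_k}$, which is the assertion.

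I do not expect a serious obstacle. The two things requiring a little care are the $\varphi$-equivariance of $\theta$ --- immediate from its definition on elementary tensors together with the definitions of $\overline{\varphi}_k$ and $\widehat{\varphi}_k$ as the induced maps --- and the multiplicativity of characteristic polynomials along the short exact sequence, handled cleanly either by the block-triangular form over $\mathbb{Z}$ (valid here since all lower central factors of a free group are torsion-free) or, more robustly, after tensoring with $\mathbb{Q}$. One should also note that for these monic integer polynomials irreducibility over $\mathbb{Z}$ and over $\mathbb{Q}$ give the same set of factors, so nothing beyond the elementary fact that a monic rational divisor of a monic integer polynomial already has integer coefficients is required.
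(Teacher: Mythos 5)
Your proof is correct, and it reaches the conclusion by a different route than the paper, even though both arguments start from the same place: the surjection $\theta$ of Proposition~\ref{TtoSec} and its equivariance $\theta\circ\overline{\varphi}_k=\widehat{\varphi}_k\circ\theta$. The paper's proof stays on the ``annihilator'' side: it shows (via the submodule $W$ spanned by tensors with a repeated entry and a choice of preimages of basic commutators) that any $f(x)\in\mathbb{Z}[x]$ with $f(\overline{\varphi}_k)=0$ also satisfies $f(\widehat{\varphi}_k)=0$; applying this to $f=P_{\overline{\varphi}_k}$ gives that the minimal polynomial of $[\widehat{\varphi}_k]$ divides $P_{\overline{\varphi}_k}$, and the containment of irreducible factor sets then follows because the minimal and characteristic polynomials of a matrix have the same irreducible factors. (In fact the paper's key step is just the transfer of annihilators through a surjection: $f(\widehat{\varphi}_k)\theta(w)=\theta(f(\overline{\varphi}_k)w)=0$; the detour through $W$ is not really needed.) You instead exploit the $t$-invariant kernel $K=\ker\theta$, the Magnus--Witt freeness of $\gamma_k(F_n)/\gamma_{k+1}(F_n)$, and block-triangularity (equivalently, multiplicativity of characteristic polynomials along the short exact sequence after tensoring with $\mathbb{Q}$) to obtain the stronger relation $P_{\overline{\varphi}_k}(x)=P_K(x)\,P_{\widehat{\varphi}_k}(x)$, i.e.\ genuine divisibility in $\mathbb{Z}[x]$ with multiplicities, from which the stated inclusion of irreducible factors is immediate by Gauss's lemma. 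Your version is cleaner and yields more than the proposition asks; its only extra input is the torsion-freeness of the lower central quotients, which the paper already uses elsewhere (Lemma~\ref{Nilw2}), and your $\mathbb{Q}$-tensoring remark removes even that dependence. All the individual steps you flag (surjectivity and equivariance of $\theta$, splitting because the quotient is free, basis-independence of the characteristic polynomial of the induced map) check out.
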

\begin{proof}
Let us prove that
for the automorphisms  $\overline{\varphi}_{k}$ and $\widehat{\varphi}_{k}$  and a polynomial  $f(x)\in\mathbb{Z}[x]$, the equality  $f(\overline{\varphi}_{k})=0$ implies  $f(\widehat{\varphi}_{k})=0$.
It is enough to prove that  for any vectors   $\overline{w}\in(F^{ab}_{n})^{\otimes k}$ and $\widehat{w}\in\gamma_{k}(F_{n})/\gamma_{k+1}(F_{n})$  the equality   $f(\overline{\varphi}_{k}) \overline{w}=0$ implies the equality $f(\widehat{\varphi}_{k}) \widehat{w}=0$.

By Proposition \ref{TtoSec} there exists the homomorphism of $\mathbb{Z}$-modules:
$$
\gamma : (F^{ab}_{n})^{\otimes k} \to \gamma_{k}F_{n}/\gamma_{k+1}F_{n},~~\overline{g}_{1}\otimes\dots\otimes\overline{g}_{k}\mapsto[ {g}_{1},\dots,
 {g}_{k}]\gamma_{k+1}F_{n},~~~\overline{g}_{i}\in F^{ab}_{n},~~ i=1,\dots k.
 $$
Let
$$
W=\langle\overline{u}\otimes\overline{u}\otimes\overline{v}_{1}\otimes\dots\otimes\overline{v}_{k-2}~|~ \overline{u}, \overline{v}_{i}\in F^{ab}_{n} \rangle_{\mathbb{Z}}<(F^{ab}_{n})^{\otimes k}.
$$ Then $W^{\gamma}=0\in\gamma_{k}(F_{n})/\gamma_{k+1}(F_{n})$ and $W^{\overline{\psi}}\leq W$, for any endomorphism    $\overline{\psi}\in\mathrm{End}\left((F^{ab}_{n})^{\otimes k}\right)$ which is induced by the endomorphism $\psi\in\mathrm{End}(F_{n})$.

If a non-trivial  element $\widehat{w}\in\gamma_{k}(F_{n})/\gamma_{k+1}(F_{n})$ is  a linear combination of the elementary commutators  $[x_{i_{1}},\dots, x_{i_{k}}]$, then some its  preimage  $\overline{w}$ under the action of  $\gamma$ is the similar linear combination of the elements  $\overline{x}_{i_{1}}\otimes\dots\otimes\overline{x}_{i_{k}}$ of  $(F^{ab}_{n})^{\otimes k}$, which does not lie in $W$. The endomorphisms  $f(\widehat{\varphi}_{k})$ and  $f(\overline{\varphi}_{k})$, which are induced by the  endomorphism  $f({\varphi})$, send  these linear combinations to linear combinations  which are equal under  $\gamma$ i.e. \ $(f(\overline{\varphi}_{k})\overline{w})^{\gamma}=f(\widehat{\varphi}_{k})\overline{w}^{\gamma}$. More exactly, if $f(\widehat{\varphi}_{k})\widehat{w}=\sum m_{i_{1},\dots,i_{k}}[x_{i_{1}},\dots, x_{i_{k}}]$,  then $f(\overline{\varphi}_{k})\overline{w}\in\sum m_{i_{1},\dots,i_{k}}\overline{x}_{i_{1}}\otimes\dots\otimes\overline{x}_{i_{k}}+W$. By assumption, the endomorphism  $f(\overline{\varphi}_{k})$ sends any non-trivial vector of   $(F^{ab}_{n})^{\otimes k}$ into $0$ \ i.e.
$\sum m_{i_{1},\dots,i_{k}}\overline{x}_{i_{1}}\otimes\dots\otimes\overline{x}_{i_{k}}\in W$, and so, the endomorphism  $f(\widehat{\varphi}_{k})$ sends any non-trivial element of the module  $\gamma_{k}(F_{n})/\gamma_{k+1}(F_{n})$ into $0$. Hence $f(\widehat{\varphi}_{k})=0$.

So if $P_{\overline{\varphi}_{k}}(x)$ is the characteristic polynomial of the matrix   $[\overline{\varphi}_{k}]$,
then $P_{\overline{\varphi}_{k}}(\widehat{\varphi}_{k})=0$. Hence, the minimal polynomial  $p_{\widehat{\varphi}_{k}}(x)$
of $[\widehat{\varphi}_{k}]$ divides
$P_{\overline{\varphi}_{k}}(x)$. It means, that any
irreducible factor of the characteristic polynomial of
$[\widehat{\varphi}_{k}]$ is an irreducible factor of the characteristic polynomial of $[\overline{\varphi}_{k}]$.
\end{proof}

Using Proposition  \ref{MFcri}, we get
\begin{cor}\label{GGcri}\hspace{1ex}For any  $k\geq1$ hold.
\begin{itemize}
\item[a)] If $\overline{G}_k$ is residually nilpotent then $\widehat{G}_k$ is residually nilpotent.
\item[b)] If $\overline{G}_k$ is residually $p$-finite then $\widehat{G}_k$ is residually $p$-finite.
\end{itemize}
\end{cor}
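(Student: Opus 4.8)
The plan is to reduce Corollary \ref{GGcri} directly to Proposition \ref{MFcri} via Proposition \ref{ChP}. Recall that $\overline{G}_k = (F^{ab}_n)^{\otimes k}\rtimes_{\overline{\varphi}_k}\mathbb{Z}$ and $\widehat{G}_k = \gamma_k(F_n)/\gamma_{k+1}(F_n)\rtimes_{\widehat{\varphi}_k}\mathbb{Z}$ are both groups of the form $\mathbb{Z}^m\rtimes_{\psi}\mathbb{Z}$ (the modules in question are finitely generated free abelian), so Proposition \ref{MFcri} applies to each of them. Let $P_{\overline{\varphi}_k}(x)$ have irreducible factors $p_1(x),\dots,p_s(x)$ over $\mathbb{Z}$, and let $P_{\widehat{\varphi}_k}(x)$ have irreducible factors $q_1(x),\dots,q_r(x)$. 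By Proposition \ref{ChP}, $\{q_1,\dots,q_r\}\subseteq\{p_1,\dots,p_s\}$ (up to sign of the factors, which does not affect evaluation at $1$ up to sign anyway).

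For part a), suppose $\overline{G}_k$ is residually nilpotent. By Proposition \ref{MFcri}a) this means $p_i(1)\neq\pm1$ for all $i=1,\dots,s$. Since every $q_j$ equals some $p_i$, we get $q_j(1)\neq\pm1$ for all $j$, and applying Proposition \ref{MFcri}a) in the other direction to $\widehat{G}_k$ gives that $\widehat{G}_k$ is residually nilpotent. For part b), suppose $\overline{G}_k$ is residually $p$-finite; then by Proposition \ref{MFcri}b) we have $p_i(1)\in p\mathbb{Z}$ for all $i$, hence $q_j(1)\in p\mathbb{Z}$ for all $j$, and Proposition \ref{MFcri}b) applied to $\widehat{G}_k$ yields that $\widehat{G}_k$ is residually $p$-finite. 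This is essentially immediate once Proposition \ref{ChP} is in hand.

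The only genuine point requiring care — and the place I expect the subtlety to lie — is the bookkeeping about signs and about whether the factor list of $P_{\widehat{\varphi}_k}$ really is a \emph{sub}-multiset (or merely a subset of the underlying irreducible polynomials) of that of $P_{\overline{\varphi}_k}$. For the residual-nilpotence criterion only the set of distinct irreducible factors matters, since the condition ``$p_i(1)\neq\pm1$ for every irreducible factor'' is insensitive to multiplicities, so the subset statement proved in Proposition \ref{ChP} is exactly what is needed. One should also note that an irreducible factor $q$ of $P_{\widehat{\varphi}_k}$ and its matching factor $p_i$ of $P_{\overline{\varphi}_k}$ may differ by a unit $\pm1$ in $\mathbb{Z}[x]$; but $q(1)=\pm p_i(1)$, so $q(1)\neq\pm1 \iff p_i(1)\neq\pm1$ and $q(1)\in p\mathbb{Z}\iff p_i(1)\in p\mathbb{Z}$, and the argument is unaffected. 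I would state the proof in two short paragraphs mirroring a) and b), invoking Proposition \ref{ChP} and Proposition \ref{MFcri} and nothing else.

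\begin{proof}
Both $\overline{G}_k$ and $\widehat{G}_k$ are of the form $\mathbb{Z}^m\rtimes_{\psi}\mathbb{Z}$ with $\psi$ an automorphism of a finitely generated free abelian group, so Proposition \ref{MFcri} applies to each of them. Let $p_{1}(x),\dots,p_{s}(x)$ be the irreducible over $\mathbb{Z}$ factors of $P_{\overline{\varphi}_{k}}(x)$ and $q_{1}(x),\dots,q_{r}(x)$ those of $P_{\widehat{\varphi}_{k}}(x)$. By Proposition \ref{ChP}, each $q_{j}$ coincides (up to a unit $\pm1$) with some $p_{i}$; in particular $q_{j}(1)=\pm p_{i}(1)$ for a suitable $i$.

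a) Assume $\overline{G}_k$ is residually nilpotent. By Proposition \ref{MFcri}a) we have $p_{i}(1)\neq\pm1$ for all $i=1,\dots,s$. Hence for every $j$ there is an $i$ with $q_{j}(1)=\pm p_{i}(1)\neq\pm1$, so $q_{j}(1)\neq\pm1$ for all $j=1,\dots,r$. Applying Proposition \ref{MFcri}a) to $\widehat{G}_k$ we conclude that $\widehat{G}_k$ is residually nilpotent.

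b) Assume $\overline{G}_k$ is residually $p$-finite. By Proposition \ref{MFcri}b) we have $p_{i}(1)\in p\mathbb{Z}$ for all $i=1,\dots,s$. Since each $q_{j}(1)=\pm p_{i}(1)$ for a suitable $i$, we get $q_{j}(1)\in p\mathbb{Z}$ for all $j=1,\dots,r$. Applying Proposition \ref{MFcri}b) to $\widehat{G}_k$ we conclude that $\widehat{G}_k$ is residually $p$-finite.
\end{proof}
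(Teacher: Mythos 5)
Your proposal is correct and follows exactly the route the paper takes: Corollary \ref{GGcri} is deduced immediately by combining Proposition \ref{ChP} (the irreducible factors of $P_{\widehat{\varphi}_{k}}$ are among those of $P_{\overline{\varphi}_{k}}$) with the Aschenbrenner--Friedl criterion of Proposition \ref{MFcri}. Your added remark about unit ($\pm1$) ambiguity of the factors is a harmless refinement and does not change the argument.
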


\subsection{Non-residually nilpotent groups}
In \cite{BMVW} was proved that if $G = F_n \rtimes_{\varphi} \mathbb{Z}$ and the matrix $[\overline{\varphi}] - E$
lies in $GL_n(\mathbb{Z})$, then $\gamma_{2}(G) =  \gamma_{3}(G)$ and
$$
\gamma_{\omega}(G) = \bigcap_{i=1}^{\infty} \gamma_i (G) = F_n.
$$
In particular, $G$ is not residually nilpotent.

The next proposition generalizes this result.

\begin{prop}\label{FnZn} Let $G = F_n \rtimes_{\varphi} \mathbb{Z}$ and $\overline{G} = F_n^{ab} \rtimes_{\overline{\varphi}} \mathbb{Z}$.
The following statements are equivalent
\begin{itemize}
\item[a)]   $\gamma_{\omega}(G) = F_n$.
\item[b)] $\gamma_{\omega}(\overline{G}) = F^{ab}_{n}$.
\item[c)] The matrix $[\overline{\varphi}] - E$
lies in $GL_n(\mathbb{Z})$ that is equivalent $\Delta_{\langle t \rangle} F^{ab}_{n} = F^{ab}_{n}$.
\end{itemize}

\end{prop}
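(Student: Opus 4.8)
The plan is to prove the cycle of implications $c)\Rightarrow a)\Rightarrow b)\Rightarrow c)$, exploiting that the whole statement only concerns the action of $\langle t\rangle$ on the abelianization.

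\emph{Step 1: reduce $\gamma_k(\overline G)$ to powers of the augmentation ideal.} In $\overline G = F_n^{ab}\rtimes_{\overline\varphi}\mathbb Z$ the subgroup $F_n^{ab}$ is abelian and normal, so a direct induction (as in Lemma \ref{Pphi1}) gives $\gamma_{k+1}(\overline G) = \Delta_{\langle t\rangle}^k F_n^{ab} = ([\overline\varphi]-E)^k F_n^{ab}$ for $k\geq 1$, and $\gamma_{\omega}(\overline G) = \bigcap_{k\geq 1}([\overline\varphi]-E)^k F_n^{ab}$. From this, $b)\Leftrightarrow c)$ is immediate: if $[\overline\varphi]-E\in \mathrm{GL}_n(\mathbb Z)$ then each $([\overline\varphi]-E)^k F_n^{ab}=F_n^{ab}$, so $\gamma_\omega(\overline G)=F_n^{ab}$; conversely, if $[\overline\varphi]-E\notin\mathrm{GL}_n(\mathbb Z)$ then $d:=|\det([\overline\varphi]-E)|\neq 1$ (it cannot be $0$ since $\overline\varphi$ is an automorphism, so $1$ is not an eigenvalue), hence $([\overline\varphi]-E)F_n^{ab}$ is a proper subgroup of index $d>1$, already giving $\gamma_2(\overline G)\neq F_n^{ab}$. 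The equivalence with $\Delta_{\langle t\rangle}F_n^{ab}=F_n^{ab}$ is just the identification $\Delta_{\langle t\rangle}F_n^{ab}=([\overline\varphi]-E)F_n^{ab}$.

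\emph{Step 2: $c)\Rightarrow a)$.} Assume $[\overline\varphi]-E\in\mathrm{GL}_n(\mathbb Z)$. The abelianization map $G\to\overline G$ sends $\gamma_k(G)$ onto $\gamma_k(\overline G)$; more usefully, I want to show directly that $F_n\subseteq \gamma_k(G)$ for all $k$. Since $[x_i,t]=x_i^{-1}\varphi(x_i)$ and modulo $\gamma_2(F_n)$ the map $x\mapsto [x,t]$ realizes $\overline\varphi - E$ on $F_n^{ab}$, the hypothesis that this map is surjective on $F_n^{ab}$ means: for every $i$ there is $w_i\in F_n$ with $x_i \equiv [w_i,t] \pmod{\gamma_2(F_n)}$, i.e. $F_n = [F_n,t]\,\gamma_2(F_n) \subseteq \gamma_2(G)$. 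Then $F_n=\gamma_2(F_n)F_n$ and, feeding $F_n\subseteq\gamma_2(G)$ back in, $\gamma_2(F_n)\subseteq[\,\gamma_2(G),\gamma_2(G)\,]\subseteq\gamma_4(G)\subseteq\gamma_3(G)$ while $[F_n,t]\subseteq[\gamma_2(G),G]=\gamma_3(G)$, so $F_n\subseteq\gamma_3(G)$; iterating, $F_n\subseteq\gamma_k(G)$ for all $k\geq 1$, whence $F_n\subseteq\gamma_\omega(G)$. The reverse inclusion $\gamma_\omega(G)\subseteq F_n$ holds because $G/F_n\cong\mathbb Z$ is abelian, so $\gamma_2(G)\subseteq F_n$. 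This gives $\gamma_\omega(G)=F_n$, i.e. $a)$.

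\emph{Step 3: $a)\Rightarrow b)$.} This is the easy direction: the projection $\pi\colon G\to\overline G$ restricts to the natural surjection $F_n\twoheadrightarrow F_n^{ab}$ and satisfies $\pi(\gamma_k(G))=\gamma_k(\overline G)$ for every $k$, hence $\pi(\gamma_\omega(G))\subseteq\gamma_\omega(\overline G)$. If $\gamma_\omega(G)=F_n$ then $\gamma_\omega(\overline G)\supseteq\pi(F_n)=F_n^{ab}$, and since trivially $\gamma_\omega(\overline G)\subseteq\gamma_2(\overline G)\subseteq F_n^{ab}$ we get $\gamma_\omega(\overline G)=F_n^{ab}$, which is $b)$.

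\emph{Main obstacle.} Steps 1 and 3 are routine. The only place requiring care is the surjectivity-lifting argument in Step 2: one must make sure that surjectivity of $\overline\varphi-E$ on the \emph{abelianization} genuinely propagates through the lower central quotients $\gamma_k(F_n)/\gamma_{k+1}(F_n)$ of the free group, so that $F_n$ is not merely contained in $\gamma_2(G)$ but in every $\gamma_k(G)$. The clean way is the bootstrap just sketched—$F_n\subseteq\gamma_m(G)$ together with $\gamma_2(F_n)\subseteq\gamma_{2m}(G)$ forces $F_n=[F_n,t]\gamma_2(F_n)\subseteq\gamma_{m+1}(G)$—rather than analysing the tensor powers $(F_n^{ab})^{\otimes k}$ directly; I would present it in that inductive form.
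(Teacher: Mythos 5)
Your proof is correct and follows essentially the same route as the paper: the key implication $c)\Rightarrow a)$ is the paper's own argument (lift surjectivity of $\overline{\varphi}-\mathrm{id}$ on $F_n^{ab}$ to $F_n=[F_n,t]\,\gamma_2(F_n)\subseteq\gamma_2(G)$ and bootstrap up the lower central series), and $b)\Leftrightarrow c)$ rests on the same identity $\gamma_{k+1}(\overline G)=([\overline\varphi]-E)^k F_n^{ab}$, with your explicit induction $F_n\subseteq\gamma_m(G)\Rightarrow F_n\subseteq\gamma_{m+1}(G)$ merely spelling out what the paper leaves as ``using induction''. One parenthetical is false: an automorphism of $\mathbb{Z}^n$ can have $1$ as an eigenvalue (e.g.\ a unipotent $[\overline\varphi]$), so $\det([\overline\varphi]-E)$ may be $0$; this is harmless, because in that case $([\overline\varphi]-E)F_n^{ab}$ has rank $<n$ and is again a proper subgroup, which is all your contrapositive for $b)\Leftarrow c)$ needs.
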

\begin{proof} $a)\Rightarrow c)$
Suppose that  $\gamma_{\omega}(G) = F_{n}$, then $\gamma_{\omega}(\overline{G})= F^{ab}_{n}$.  Since
$$
F^{ab}_{n}\geq \gamma_{2}(\overline{G})\geq\dots \geq \gamma_{\omega}(\overline{G}),$$
we get
$$
F^{ab}_{n}=\gamma_{2}(\overline{G})=\dots=\gamma_{\omega}(\overline{G}).
$$
 Since in $\overline{G}$ holds
 $$
 ([\overline{\varphi}]-E)F_{n}^{ab}=\gamma_{2}(\overline{G}),
 $$ we have  $[\overline{\varphi}]-E\in\mathrm{GL}_{n}(\mathbb{Z})$.

$c)\Rightarrow a)$ Let $f \in F_{n}$. The map $f \mapsto[f, t],$  induces on the free   $\mathbb{Z}$-module $F_{n}^{ab}$ the linear transformation  $[\overline{\varphi}]-E$, which is an automorphism of $\mathbb{Z}$-module. So, there exist  elements $x_{1}^{k_{1i}}\cdots x_{n}^{k_{ni}},$    $i=1,\dots, n,$ such that  $[x_{1}^{k_{1i}}\cdots x_{n}^{k_{ni}}, t]=x_{i}w_{i}$, where $w_{i}\in\gamma_{2}(F_{n})$. Hence, $F_{n}\leq\gamma_{2}(G)$ and since $G/F_{n}=\mathbb{Z}$ is an abelian group,  we get $F_{n}=\gamma_{2}(G)$. Using induction one can see that  $x_{1},\dots, x_{n}\in\gamma_{s}(G), s\geq2,$ i.e. $F_{n}\leq\gamma_{\omega}(G)$.
Hence, we have the inclusions    $$F_{n}\leq\gamma_{\omega}(G)\leq\gamma_{2}(G)=F_{n},$$ from which follows the need equality   $F_{n}=\gamma_{\omega}(G)$.

$c)\Leftrightarrow b)$ Follows from the equality $$([\overline{\varphi}]-E)^{s}F_{n}^{ab}=\gamma_{s+1}(\overline{G}), s\geq1.$$
\end{proof}

%For $G = F_n \rtimes_{\varphi} \mathbb{Z}$ we have the lower central series
%$$
%G \geq \gamma_2(G) \geq \gamma_3(G) \geq \cdots
%%There exists an epimorphism $\pi : G \to \overline{G} = F_n^{ab} \rtimes_{\overline{\varphi}} \mathbb{Z}$,
%where $\overline{\varphi}$ is induced by $\varphi$. The epimorphism $\pi$ induces the epimorphisms
%$$
%\gamma_i(G) \to \gamma_i(\overline{G}),~~i = 2, 3, \ldots.
%$$
%From this observation follows

%\begin{prop}
%If $G = F_n \rtimes_{\varphi} \mathbb{Z}$ is residually nilpotent,
%then $\overline{G} = F_n^{ab} \rtimes_{\overline{\varphi}} \mathbb{Z}$ is residually nilpotent.
%\end{prop}

%The next example shows that inverse assertion is not true.

%\begin{example}
%Let $G = F_2 \rtimes_{\varphi} \mathbb{Z}$, where
%$$
%\varphi :
%%\left\{%
%\begin{array}{l}
%  a \to  b, \\
%  b \to a b^3. \\
%\end{array}%
%\right.
%$$
%This group has the lower central series of length $\omega^2$
%and $\gamma_{\omega^2}(G) = 1$ (see \cite{M}), but
%$\overline{G}  = F_2^{ab} \rtimes_{\varphi} \mathbb{Z}$
%is residually nilpotent that follows from \cite[Lemma 2]{M}.
%\end{example}

%\begin{remark}
%If we change the defining relation in the previous example
%and take $G = F_2 \rtimes_{\varphi} \mathbb{Z}$, where
%$$
%\varphi :
%\left\{%
%\begin{array}{l}
%  a \to  b, \\
%  b \to a^{-1} b^3. \\
%\end{array}%
%\right.
%$$
%then in this group $\gamma_{2}(G) =  \gamma_{3}(G)$.
%\end{remark}

%Button \cite{But} proved that if $G = F_n \rtimes_{\varphi} \mathbb{Z}$
%and $n \geq 2$, $\beta_1(G) \geq 2$, then $G = F_m \rtimes_{\psi} \mathbb{Z}$ for infinitely many $m$.

\subsection{Groups with long lower central series}

Mikhailov  \cite{M} constructed a group with one defining relation that has the lower central series of length  $\omega^{2}$.
\begin{example}[\cite{M}]\label{CGM}
The group  $G_{M} = F_2 \rtimes_{\varphi} \mathbb{Z}$, where
$$
\varphi :
\left\{%
\begin{array}{l}
  a \to  b, \\
  b \to a b^3. \\
\end{array}%
\right.
$$
 has the lower central series of length $\omega^2$. At the same time,
$\overline{G}  = F_2^{ab} \rtimes_{\overline{\varphi}} \mathbb{Z}$
is residually nilpotent.
\end{example}

In \cite{M} for groups of the form
 $F_{n}\!\rtimes_{\varphi}\!\mathbb{Z}$ was formulated without prove the following theorem.

\begin{theorem}[\cite{M}] \label{Mres} Let $G = F_{n}\rtimes_{\varphi}\mathbb{Z}$.
If all groups  $\overline{G}_k = (F^{ab}_{n})^{\otimes k}\rtimes_{\overline{\varphi}_{k}}\mathbb{Z}$,   $k\geq1$,
are residually nilpotent, then $\gamma_{\omega^{2}}(G)=1$.
 Wherein if   $G$ is not residually nilpotent, then the length of its lower central series is equal to  $\omega^{2}$.
\end{theorem}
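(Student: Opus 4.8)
The plan is to establish the two inclusions $\gamma_{\omega^2}(G) = 1$ and, when $G$ is not residually nilpotent, $\gamma_{\omega^2}(G) \ne \gamma_{\alpha}(G)$ for every $\alpha < \omega^2$, so that the length is exactly $\omega^2$. For the first part I would run a transfinite induction modelled on the finite-level arguments of Theorems \ref{t3.1}--\ref{t3.2}. Concretely, I would prove by induction on $k \ge 1$ the inclusion
\begin{equation} \label{Mind}
\gamma_{k\omega}(G) \subseteq \gamma_k(F_n).
\end{equation}
The base case $k=1$ is trivial. For the inductive step, $\gamma_{k\omega}(G) = \bigcap_{l} \gamma_{(k-1)\omega + l}(G)$, and by the induction hypothesis $\gamma_{(k-1)\omega}(G) \subseteq \gamma_{k-1}(F_n)$, so it suffices to show that the image of $\gamma_{k-1}(F_n)/\gamma_k(F_n)$ is killed in $G/\gamma_k(F_n)$ after intersecting over all lower central terms. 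Via Proposition \ref{TtoSec}, $\gamma_{k-1}(F_n)/\gamma_k(F_n)$ is a quotient of $(F_n^{ab})^{\otimes(k-1)}$ as a $\mathbb Z[\langle t\rangle]$-module, equivariantly under the action of $\langle t\rangle$; the action of $F_n$ on this quotient is trivial, so the relevant commutators with $G$ reduce to commutators with $t$, which on the tensor module act by $\overline{\varphi}_{k-1}$. Thus $[\, (F_n^{ab})^{\otimes(k-1)},\,{}_l\,\langle t\rangle\,] \subseteq \Delta_{\langle t\rangle}^{\,l}\,(F_n^{ab})^{\otimes(k-1)}$, exactly as in the displayed computation preceding Theorem \ref{t3.1}. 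Now the hypothesis that $\overline{G}_{k-1}$ is residually nilpotent means precisely $\bigcap_{l}\Delta_{\langle t\rangle}^{\,l}\,(F_n^{ab})^{\otimes(k-1)} = 0$, which forces the intersection over $l$ of these commutator subgroups into $\gamma_k(F_n)$, giving \eqref{Mind}. Intersecting \eqref{Mind} over all $k$ yields $\gamma_{\omega^2}(G) \subseteq \bigcap_k \gamma_k(F_n) = \gamma_\omega(F_n) = 1$, since $F_n$ is residually nilpotent (Magnus).

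For the second part, assume $G$ is not residually nilpotent, i.e. $\gamma_\omega(G) \ne 1$. I first note $\gamma_\omega(G) \subseteq F_n$ (since $G/F_n \cong \mathbb Z$ is residually nilpotent, $\gamma_\omega(G) \subseteq F_n$ — this is the $k=1$ instance of \eqref{Mind}). So there is a smallest $k_0 \ge 2$ with $\gamma_\omega(G) \not\subseteq \gamma_{k_0}(F_n)$; equivalently $\gamma_\omega(G)$ has nontrivial image in $\gamma_{k_0 - 1}(F_n)/\gamma_{k_0}(F_n)$. I would then show that for this $k_0$ the chain $\gamma_{(k_0-1)\omega + l}(G)$, $l = 1,2,\dots$, is strictly decreasing — were it to stabilize at some finite $l$, that stable value would equal $\gamma_{(k_0-1)\omega+\omega}(G) = \gamma_{k_0\omega}(G) \subseteq \gamma_{k_0}(F_n)$ by \eqref{Mind}, forcing $\gamma_\omega(G) \subseteq \gamma_{k_0}(F_n)$, contradicting the choice of $k_0$. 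Hence $\gamma_{(k_0-1)\omega+l}(G) \supsetneq \gamma_{(k_0-1)\omega+l+1}(G)$ for all $l$, so the series does not stabilize before $\omega^2$; combined with $\gamma_{\omega^2}(G) = 1 = \gamma_{\omega^2+1}(G)$ from the first part, the length is exactly $\omega^2$.

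The main obstacle I anticipate is the transfinite bookkeeping in the inductive step: one must be careful that ``$\gamma_{(k-1)\omega}(G) \subseteq \gamma_{k-1}(F_n)$'' together with the module-level vanishing $\bigcap_l \Delta_{\langle t\rangle}^l (F_n^{ab})^{\otimes(k-1)} = 0$ really does give $\bigcap_l \gamma_{(k-1)\omega+l}(G) \subseteq \gamma_k(F_n)$, rather than merely a statement about each finite level separately. The subtle point is that the commutator calculus $[\gamma_{k-1}(F_n), {}_l\, G] \subseteq (\text{preimage of } \Delta^l_{\langle t\rangle}\text{-part}) \cdot \gamma_k(F_n)$ must be set up so that passing to the intersection over $l$ commutes with the quotient by $\gamma_k(F_n)$ — this works because $\gamma_k(F_n)/\gamma_{k+1}(F_n) \cdots$ truncations are finitely generated abelian and the descending chain of their submodules intersects to zero, but it deserves an explicit lemma. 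A secondary point requiring care is the reduction ``action of $F_n$ is trivial on $\gamma_{k-1}(F_n)/\gamma_k(F_n)$, hence commutators with $G$ = commutators with $t$ modulo higher terms'', which uses the standard three-subgroup/commutator identities \eqref{f1}--\eqref{f2} in the same spirit as Lemma \ref{l2}; I would state it once and reuse it. Everything else is routine given Proposition \ref{TtoSec}, Lemma \ref{KhC}, and the definition of residual nilpotence for modules.
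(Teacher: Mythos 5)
Your induction $\gamma_{k\omega}(G)\subseteq\gamma_{k}(F_n)$ is the right skeleton (it is essentially Lemma \ref{Nilw2} of the paper), but the hinge of the inductive step is not valid as you state it. You pass from $\bigcap_{l}\Delta_{\langle t\rangle}^{\,l}(F_n^{ab})^{\otimes(k-1)}=0$ directly to the vanishing of the corresponding intersection inside $\gamma_{k-1}(F_n)/\gamma_{k}(F_n)$. However, that lower central quotient is a \emph{proper quotient module} of the tensor power, and a surjection of modules only carries the intersection of a descending chain \emph{into} the intersection of the image chain, never onto it; residual nilpotence of a module does not pass to quotients in general (e.g.\ $\mathbb{Z}[t^{\pm1}]$ is residually nilpotent over itself, while its quotient by the ideal $(t-2)$, on which $t-1$ acts invertibly, is not). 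Your proposed repair --- ``finitely generated abelian, so the descending chain of submodules intersects to zero'' --- is circular: whether $\bigcap_{l}\Delta^{l}\bigl(\gamma_{k-1}(F_n)/\gamma_{k}(F_n)\bigr)=0$, i.e.\ whether $\widehat{G}_{k-1}$ is residually nilpotent, is precisely what must be proved. The paper supplies this bridge by Proposition \ref{ChP} (the minimal polynomial of $\widehat{\varphi}_{k}$ divides the characteristic polynomial of $\overline{\varphi}_{k}$) combined with the Aschenbrenner--Friedl criterion (Proposition \ref{MFcri}), yielding Corollary \ref{GGcri}; an argument of this kind (characteristic polynomials of $\mathbb{Z}$-free quotients plus the criterion) is genuinely needed and is absent from your plan.

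The second half also has a real gap. From stabilization of the chain at some finite $l$ you get $\gamma_{(k_0-1)\omega+l}(G)=\gamma_{k_0\omega}(G)\subseteq\gamma_{k_0}(F_n)$, which constrains that deep term only; it does not ``force $\gamma_{\omega}(G)\subseteq\gamma_{k_0}(F_n)$'', so the claimed contradiction with the choice of $k_0$ is a non sequitur. Moreover, even granting strict descent inside the single block $[(k_0-1)\omega,\,k_0\omega)$, this does not rule out $\gamma_{k_0\omega}(G)=1$, in which case the length would be at most $k_0\omega<\omega^{2}$; to get length exactly $\omega^{2}$ you must show $\gamma_{k\omega}(G)\neq1$ for \emph{every} finite $k$. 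The paper does this by observing that $\gamma_{\omega}(G)$, being a nontrivial normal subgroup of the free group $F_n$, is free non-abelian, so $\gamma_{i}(\gamma_{\omega}(G))\neq1$ for all $i$, and $\gamma_{i}(\gamma_{\omega}(G))\leq\gamma_{i\omega}(G)$ by the inclusion (\ref{iw}) coming from Lemma \ref{KhC}. That idea (or a substitute for it) is missing from your proposal.
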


We give the full proof of this theorem. To do it, let us prove the following claim.

\begin{lemma}\label{Nilw2}
If all groups $\widehat{G}_i= \gamma_{i}(F_{n})/\gamma_{i+1}(F_{n})\rtimes_{\widehat{\varphi}_{i}}\mathbb{Z}$ are residually    nilpotent, then  $\gamma_{\omega^{2}}(G)=1$. Wherein if   $G$
is not residually nilpotent, then the length of its lower central series is equal to  $\omega^{2}$.
 \end{lemma}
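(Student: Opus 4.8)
The plan is to filter the lower central series of $G=F_n\rtimes_\varphi\mathbb Z$ through the successive quotients $\gamma_i(F_n)/\gamma_{i+1}(F_n)$, on which $\mathbb Z=\langle t\rangle$ acts via $\widehat\varphi_i$, and to use residual nilpotence of each $\widehat G_i$ to push down the "$\omega$-part". First I would record the elementary facts: since $G/F_n\cong\mathbb Z$ is abelian, $\gamma_2(G)\le F_n$, hence $\gamma_\omega(G)\le F_n$; and because $\gamma_{i}(F_n)$ is characteristic in $F_n$ and $F_n\lhd G$, each $\gamma_i(F_n)$ is normal in $G$. The key inductive claim is
\begin{equation}\label{keyincl}
\gamma_{i\omega}(G)\le \gamma_i(F_n),\qquad i\ge 1.
\end{equation}
The base case $i=1$ is trivial. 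For the inductive step, assuming $\gamma_{i\omega}(G)\le\gamma_i(F_n)$, I want to show $\gamma_{(i+1)\omega}(G)\le\gamma_{i+1}(F_n)$, i.e. that the image of $\gamma_{(i+1)\omega}(G)$ in $M_i:=\gamma_i(F_n)/\gamma_{i+1}(F_n)$ is trivial. Since $\gamma_{i\omega+1}(G)=[\gamma_{i\omega}(G),G]$ and $\gamma_{i\omega}(G)\le\gamma_i(F_n)$, the image of $\gamma_{i\omega+k}(G)$ in $M_i$ is contained in $[\,\mathrm{im}\,\gamma_{i\omega}(G),\,{}_kG\,]$, and as $F_n$ centralizes $M_i$ modulo $\gamma_{i+1}(F_n)$ — because $[\gamma_i(F_n),F_n]\le\gamma_{i+1}(F_n)$ — this reduces to iterated commutators with $t$, i.e. to applying $(\widehat\varphi_i-\mathrm{id})$ repeatedly. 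Concretely, the image of $\gamma_{i\omega+k}(G)$ in $M_i$ lies in $(\widehat\varphi_i-\mathrm{id})^{k}M_i$ as soon as $\mathrm{im}\,\gamma_{i\omega}(G)=M_i$, or more carefully in $\Delta_{\langle t\rangle}^{k-c_i}M_i$ for a fixed shift $c_i$ once one accounts for the finitely many generators. Taking the intersection over $k$ and using that $\widehat G_i$ is residually nilpotent — which by definition means $\bigcap_k\Delta_{\langle t\rangle}^kM_i=0$ — gives that the image of $\bigcap_k\gamma_{i\omega+k}(G)=\gamma_{(i+1)\omega}(G)$ in $M_i$ is $0$, i.e. $\gamma_{(i+1)\omega}(G)\le\gamma_{i+1}(F_n)$, completing the induction.

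Granting \eqref{keyincl}, I conclude $\gamma_{\omega^2}(G)=\bigcap_{i\ge1}\gamma_{i\omega}(G)\le\bigcap_{i\ge1}\gamma_i(F_n)=\gamma_\omega(F_n)=1$, since free groups are residually nilpotent by Magnus's theorem. This proves the first assertion. For the second, suppose $G$ is not residually nilpotent, i.e. $\gamma_\omega(G)\ne1$; I must show the length of the lower central series is exactly $\omega^2$, which means $\gamma_{\omega^2}(G)=\gamma_{\omega^2+1}(G)$ (automatic, both are $1$) but $\gamma_\alpha(G)\ne\gamma_{\alpha+1}(G)$ for all $\alpha<\omega^2$. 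So I need that the series does not stabilize before $\omega^2$. For ordinals of the form $\alpha=i\omega+k$ with $k$ finite, if $\gamma_\alpha(G)=\gamma_{\alpha+1}(G)$ then that common subgroup $N$ satisfies $N=[N,G]$, so it is a perfect $G$-invariant subgroup contained in $F_n$ (as $\alpha\ge\omega$); but a nontrivial subgroup $N\le F_n$ with $N=[N,G]\le[F_n,F_n]$ forces $N\le\gamma_m(F_n)$ for all $m$ (iterating $[N,G]\le[N,F_n]\cdot(\text{image under }\widehat\varphi-\mathrm{id})$ and descending through the $M_i$), hence $N\le\gamma_\omega(F_n)=1$, contradiction; and if $N=1$ we would have $\gamma_\omega(G)\le\gamma_\alpha(G)=1$, also a contradiction. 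Thus no stabilization occurs at any $\alpha$ with $\omega\le\alpha<\omega^2$, and stabilization below $\omega$ is impossible since $\gamma_\omega(G)\ne1$ means $F_n\not\le\gamma_\omega(G)$ would have to be checked — more precisely $\gamma_k(G)\ne\gamma_{k+1}(G)$ for finite $k$ because $F_n$ surjects onto the nontrivial $F_n^{ab}$ with $t$ acting nontrivially, or one simply invokes that a finitely generated group with $\gamma_\omega\ne1$ and $\gamma_{\omega^2}=1$ genuinely reaches $\omega^2$.

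The step I expect to be the main obstacle is making the reduction "commutators with $G$, modulo $\gamma_{i+1}(F_n)$, become iterated applications of $\widehat\varphi_i-\mathrm{id}$" fully rigorous, including the bookkeeping of the finite shift: one has $\gamma_{i\omega}(G)\le\gamma_i(F_n)$ but not necessarily equality, and $M_i$ is generated by finitely many basic commutators, so passing from $\gamma_{i\omega}(G)$ to its image in $M_i$ and then controlling $[\,\cdot\,,{}_kG\,]$ requires Lemma \ref{KhC} and the identities \eqref{f1}, \eqref{f2} to strip off the $F_n$-part of $G=F_n\langle t\rangle$ and to commute powers past commutators, exactly in the spirit of the computations in Theorems \ref{t3.1} and \ref{t3.2}. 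The definition of residual nilpotence of the $\mathbb Z[\langle t\rangle]$-module $M_i$ as $\bigcap_k\Delta_{\langle t\rangle}^kM_i=0$ is what closes the loop, and it matches residual nilpotence of $\widehat G_i$ by the discussion preceding Proposition \ref{MFcri}. I would present the argument as a clean induction on $i$ for \eqref{keyincl}, then deduce both halves of the lemma.
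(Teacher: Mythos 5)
Your proof of the first assertion is essentially the paper's own argument and is correct: you filter through $M_i=\gamma_i(F_n)/\gamma_{i+1}(F_n)$, note that modulo $\gamma_{i+1}(F_n)$ the conjugation action of $G$ on $M_i$ factors through $\langle t\rangle$, so the image of $\gamma_{i\omega+k}(G)$ lies in $\Delta_{\langle t\rangle}^{k}M_i$ (no shift $c_i$ is actually needed, since commutation with any $ft^m$ multiplies the image by an element of $\Delta_{\langle t\rangle}$), and residual nilpotence of $\widehat G_i$ -- identified with $\bigcap_m(\widehat{\varphi}_i-\mathrm{id})^m M_i=0$ via the computation $\gamma_{m+1}(M_i\rtimes\mathbb{Z})=(\widehat{\varphi}_i-\mathrm{id})^{m}M_i$ that the paper records -- yields $\gamma_{(i+1)\omega}(G)\le\gamma_{i+1}(F_n)$ and hence $\gamma_{\omega^2}(G)\le\gamma_\omega(F_n)=1$.

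The second assertion is where there is a genuine gap. You must rule out the series becoming trivial at some ordinal $\alpha$ with $\omega\le\alpha<\omega^2$, and your case analysis breaks exactly there: in the case $N=\gamma_\alpha(G)=\gamma_{\alpha+1}(G)=1$ you claim ``$\gamma_\omega(G)\le\gamma_\alpha(G)=1$'', but for $\alpha\ge\omega$ the inclusion goes the other way, $\gamma_\alpha(G)\le\gamma_\omega(G)$, so no contradiction follows. Your perfect-subgroup argument (case $N\ne1$) only excludes stabilization at a nontrivial value; it gives upper bounds on the terms and cannot exclude, for instance, $\gamma_{\omega\cdot2}(G)=1$ while $\gamma_\omega(G)\ne1$, which would make the length $\omega\cdot2<\omega^2$. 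The fallback ``a finitely generated group with $\gamma_\omega\ne1$ and $\gamma_{\omega^2}=1$ genuinely reaches $\omega^2$'' is not a theorem you may invoke -- it is precisely what has to be proved. The missing ingredient, which is the paper's argument, is a lower bound on the transfinite terms: since $1\ne\gamma_\omega(G)\le F_n$ is normal in $F_n$, it is a non-abelian free group (a nontrivial normal subgroup of $F_n$, $n\ge2$, cannot be cyclic), hence $\gamma_i(\gamma_\omega(G))\ne1$ for every finite $i$; the inclusion (\ref{iw}), $\gamma_i(\gamma_\omega(G))\le\gamma_{i\omega}(G)$ (a consequence of Lemma \ref{KhC}), then gives $\gamma_{i\omega}(G)\ne1$ for all finite $i$, so no term below $\omega^2$ is trivial, and together with $\gamma_{\omega^2}(G)=1$ this forces the length to be exactly $\omega^2$. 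Without an argument of this kind your second half does not go through.
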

\begin{proof}  It is well known that any quotient   $\gamma_{i}(F_{n})/\gamma_{i+1}(F_{n}),$ $i\geq1,$ is a free  $\mathbb{Z}$-module of finite rank. If we are considering an extension  $\mathbb{Z}^k \rtimes_{\varphi} \mathbb{Z}$ for some  $\varphi\in\mathrm{Aut}(\mathbb{Z}^k),$ then
$$
(\varphi-id)^{i}\mathbb{Z}^{k}=\gamma_{i+1}(\mathbb{Z}^{k} {\rtimes}_{\varphi} \mathbb{Z}), ~~~i\geq1.
$$
 Hence, by the assumption
 $$
 \bigcap_{m=1}^{\infty}(\widehat{\varphi}_{i}-id)^{m}\gamma_{i}(F_{n})/\gamma_{i+1}(F_{n})=0,~~i\geq1.
 $$
  So the following inclusion holds
$$
  \gamma_{i\omega}(F_{n}\rtimes_{\varphi}\mathbb{Z})\leq \gamma_{i+1}(F_{n}),~~i\geq1.
$$
Hence  $\gamma_{\omega^{2}}(G)=1$.

Further, if $\gamma_{\omega}(G) \not= 1$, then it is a free non-abelian group, which is normal in  $F_n$. So $\gamma_{i}(\gamma_{\omega}(G) ) \not= 1,$
for $i\ge1$. On the other side,  (\ref{iw}) implies $\gamma_{i}(\gamma_{\omega}(G) )\leq\gamma_{i\omega}(G), i\geq1$.
Using the fact that   $\gamma_{i}(\gamma_{\omega}(G) )$ is non-trivial for all $i \geq 1$, we get that  the length of the lower central series of
$G$ is equal to  $\omega^{2}$.
\end{proof}

Let us prove Theorem \ref{Mres}.
\begin{proof}\label{Tenz}
 Corollary \ref{GGcri} implies that from residual
 nilpotences  of groups
$\overline{G}_k$, $ k\geq1$,
follows the residual  nilpotences of groups
$\widehat{G}_k$, $k\geq1$.
By Lemma \ref{Nilw2} $\gamma_{\omega^{2}}(G)=1$, and if   $G$
is not residually nilpotent, then the length of its lower central series is equal to  $\omega^{2}$. This completes the proof of Theorem  \ref{Mres}.
\end{proof}

\section{Residually $p$-finiteness of groups $F_n \rtimes_{\varphi} \mathbb{Z}$}\label{pFnZ}

In this section  we give some sufficient conditions under which  $G = F_{n}\rtimes_{\varphi}\mathbb{Z}$ is  residually $p$-finite.

We shall use  the following statements.

\begin{lemma}\label{NGp}
Let  $G$ be   $l$-step nilpotent group.
If $[G, G]^{r}=1$ for some natural $r$, then $[G^{r^{l-1}}, G]=1$.
\end{lemma}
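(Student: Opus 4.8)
The plan is to prove, by induction on the nilpotency class $l$, that in an $l$-step nilpotent group $G$ with $[G,G]^r = 1$ one has $[G^{r^{l-1}}, G] = 1$. For $l = 1$ the group is abelian, so $[G,G] = 1$ and the statement reads $[G^{r^{0}}, G] = [G, G] = 1$, which holds trivially. Assume the claim for nilpotent groups of class at most $l-1$. Given $G$ of class $l$ with $[G,G]^r = 1$, consider the central quotient $\overline{G} = G / \gamma_l(G)$, which has class at most $l-1$ and still satisfies $[\overline{G}, \overline{G}]^r = 1$. By the inductive hypothesis $[\overline{G}^{\,r^{l-2}}, \overline{G}] = 1$, i.e. $[G^{r^{l-2}}, G] \subseteq \gamma_l(G)$.

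Next I would exploit that $\gamma_l(G)$ is central together with the standard commutator calculus. Write $g = h^{r^{l-1}}$ with $h \in G^{r^{l-2}}$ — more precisely, take any element $u \in G$ and note $u^{r^{l-1}} = (u^{r^{l-2}})^{r}$, with $v := u^{r^{l-2}} \in G^{r^{l-2}}$. I want $[v^{r}, w] = 1$ for every $w \in G$. Using the identity $[v^r, w] = \prod_{i=0}^{r-1} [v,w]^{v^{i}}$ (a consequence of $[xy,z] = [x,z]^{y}[y,z]$ applied repeatedly) and the fact that $[v,w] \in [G^{r^{l-2}}, G] \subseteq \gamma_l(G)$ is central, each conjugate $[v,w]^{v^{i}}$ equals $[v,w]$, so $[v^r, w] = [v,w]^{r}$. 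But $[v,w] \in [G,G]$, and $[G,G]^r = 1$ gives $[v,w]^r = 1$. Hence $[v^r, w] = 1$ for all $w$, and since elements of the form $u^{r^{l-1}} = (u^{r^{l-2}})^{r} = v^{r}$ generate $G^{r^{l-1}}$, one concludes $[G^{r^{l-1}}, G] = 1$, completing the induction.

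The one point that needs a little care — and which I expect to be the main technical obstacle — is the passage from "$[v^r,w] = 1$ for each generating power $v^r$ of $G^{r^{l-1}}$" to "$[G^{r^{l-1}}, G] = 1$": the subgroup $G^{r^{l-1}}$ is generated by the $r^{l-1}$-th powers of elements of $G$, not equal to the set of such powers, so I should check that the set $\{\,g \in G : [g,w]=1 \text{ for all } w \in G\,\}$ is a subgroup (indeed it is the center $Z(G)$, and $Z(G)$ is a subgroup), whence containing all the generating powers it contains all of $G^{r^{l-1}}$. Equivalently, one verifies directly that if $[a,w] = [b,w] = 1$ for all $w$ then $[ab,w] = [a,w]^{b}[b,w] = 1$. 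With that remark the argument is complete; everything else is routine commutator expansion plus the centrality of $\gamma_l(G)$ supplied by the inductive step.
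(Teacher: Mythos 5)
Your proof is correct and follows essentially the same route as the paper: induction on the nilpotency class, using the inductive hypothesis (applied to the class-$(l-1)$ quotient) to place $[G^{r^{l-2}},G]$ in the central subgroup $\gamma_l(G)$, and then the commutator expansion $[v^r,w]=[v,w]^r=1$ coming from $[G,G]^r=1$. You merely spell out more carefully two points the paper leaves implicit (the passage to $G/\gamma_l(G)$ and the reduction from generating powers to all of $G^{r^{l-1}}$ via the center), which is fine.
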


\begin{proof} If $l=1$, then $G$ is abelian and $[G, G] = 1$.

Let $\gamma_{l+1}(G)=1$,  by induction assumption  $[G^{r^{l-2}},G]\in\gamma_{l}(G)$, then from the commutator identity
$$
[ab,c]=[a,c][a,c,b][b,c]
$$ follows
$$
[G^{r^{l-1}},G]=[(G^{r^{l-2}})^{r},G]=[G^{r^{l-2}},G]^{r}=1.
$$
\end{proof}

\begin{lemma}\label{MN}
Let $M, N\leq\mathbb{Z}^{d}$ be submodules of finite  $p$-indexes,
 $\mathbf{v}_{1}, \mathbf{v}_{2},\dots, \mathbf{v}_{d}$ be some basis of  $N$.
If all non-trivial sums of the form  $\sum_{i=1}^{d}\alpha_{i}\mathbf{v}_{i}$,
where $\alpha_{i}\in\{0, 1,\dots, p-1\}$, do not lie in  $M$, then  $M<N$.
\end{lemma}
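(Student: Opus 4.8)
The plan is to prove Lemma \ref{MN} by a straightforward counting/coset argument inside the finitely generated free abelian group $\mathbb{Z}^d$. First I would record what the hypotheses give us: $N \leq \mathbb{Z}^d$ is a submodule of finite $p$-index with basis $\mathbf{v}_1,\dots,\mathbf{v}_d$, and $M \leq \mathbb{Z}^d$ is a submodule of finite $p$-index. The key auxiliary subgroup to look at is $M \cap N$, or rather the quotient $N/(M\cap N)$, which embeds into $\mathbb{Z}^d/M$ and hence is a finite abelian $p$-group. The hypothesis says precisely that every element of the form $\sum_{i=1}^d \alpha_i \mathbf{v}_i$ with $\alpha_i \in \{0,1,\dots,p-1\}$, other than the zero vector, is not in $M$; equivalently, these $p^d$ vectors represent $p^d$ \emph{distinct} nonzero cosets of $M\cap N$ inside $N$ (distinctness because a difference of two such vectors is again of a similar shape with coefficients in $\{-(p-1),\dots,p-1\}$, and one must check such a difference cannot lie in $M\cap N$ unless it is zero — this needs a small argument, see below).

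The main step is then: the set $S = \{\sum \alpha_i \mathbf{v}_i : \alpha_i \in \{0,\dots,p-1\}\}$ maps \emph{injectively} into $N/pN$, since $N/pN$ is an $\mathbb{F}_p$-vector space of dimension $d$ with basis the images of the $\mathbf{v}_i$, and $S$ is exactly a full set of coset representatives for $pN$ in $N$. So $|S| = p^d = |N/pN|$. Now I want to conclude $M < N$, i.e. $M \subsetneq N$. Suppose not. If $M \not< N$, then... actually the cleanest route: consider the subgroup $pN$. I claim $M \cap N \subseteq pN$. Indeed, take $x \in M \cap N$ and write $x = \sum \beta_i \mathbf{v}_i$; reducing each $\beta_i$ modulo $p$, write $\beta_i = \alpha_i + p\gamma_i$ with $\alpha_i \in \{0,\dots,p-1\}$, so $x = \sum \alpha_i \mathbf{v}_i + p\sum \gamma_i \mathbf{v}_i$. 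Then $\sum \alpha_i \mathbf{v}_i = x - p\sum\gamma_i\mathbf{v}_i \in M$ (since $x\in M$ and $pN \subseteq$ ... wait, we need $pN \subseteq M$, which is not given). So this needs adjustment: instead use that $\sum\alpha_i\mathbf{v}_i = x - p(\cdots) \in N$ automatically, but lies in $M$ only if $pN\subseteq M$. The fix is to not require that; instead argue directly that $M\cap N$ and $pN$ are comparable in index or use the following: the $p^d$ elements of $S$ are pairwise incongruent mod $M\cap N$ (this is the real content of the hypothesis, once the difference argument is done), hence $[N : M\cap N] \geq p^d = [N:pN]$. I would prove the difference claim carefully: if $\sum \alpha_i \mathbf{v}_i \equiv \sum \alpha_i' \mathbf{v}_i \pmod{M\cap N}$ with $\alpha_i,\alpha_i' \in \{0,\dots,p-1\}$, then $\sum(\alpha_i - \alpha_i')\mathbf{v}_i \in M\cap N \subseteq M$; but then reducing mod $p$, write each $\alpha_i - \alpha_i' = \delta_i$ or $\delta_i + p$ with $\delta_i \in \{0,\dots,p-1\}$ chosen so the representative is again of the standard form — here I'd be a little careful and perhaps instead iterate: repeatedly replace a coefficient outside $\{0,\dots,p-1\}$ by adjusting with multiples, staying inside $M$ at each stage only if $p(M\cap N)$-type terms stay in $M$.

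Because of the subtlety just noted, I now think the honest clean approach is: show $[N : M \cap N]$ is divisible by $p^d$ — no wait, we want $M < N$, i.e. $M \cap N \ne N$, which is weak. Actually re-reading: the conclusion "$M < N$" most likely means $M$ is \emph{contained in} $N$ (the paper writes $<$ for $\leq$ or $\subsetneq$ loosely). So the real goal is $M \subseteq N$. That changes everything and is much more natural: we must show every $m \in M$ lies in $N$. Here is the plan for that. Suppose $m \in M$ but $m \notin N$. Since $N$ has finite $p$-index, there is a smallest $k \geq 1$ with $p^k m \in N$; write $p^k m = \sum \beta_i \mathbf{v}_i$ with not all $\beta_i$ divisible by $p$ (else $k$ wasn't minimal, using that $\{\mathbf{v}_i\}$ is a basis so $p^{k}m = p \cdot (\text{something in } N)$ would force the something to be $p^{k-1}m \in N$). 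Now consider a suitable $\mathbb{Z}$-linear combination: since $M$ is a subgroup and $m \in M$, also $\sum \beta_i \mathbf{v}_i = p^k m \in M \cap N$, and one can subtract off multiples of $p$ in each coordinate — precisely, let $\alpha_i \in \{0,\dots,p-1\}$ with $\alpha_i \equiv \beta_i \pmod p$; then $\sum\alpha_i\mathbf{v}_i = \sum\beta_i\mathbf{v}_i - p\sum\frac{\beta_i-\alpha_i}{p}\mathbf{v}_i$. The first term is in $M$; I need the subtracted term in $M$. This again needs $pN \subseteq M$.

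So the genuinely correct reading must involve the finite $p$-index of $M$ being used, and I suspect the intended argument combines both: $M \cap N$ has finite $p$-index in $\mathbb{Z}^d$, hence contains $p^t \mathbb{Z}^d$ for some $t$, and in particular $p^t N \subseteq M \cap N \subseteq M$. With $pN$ replaced by $p^t N$ the descent argument works in $t$ steps. Concretely: the plan is to show $M \cap N = N$ would follow, OR more likely to show the index bound and conclude the stated containment. Given the role this lemma plays (it is used right after to analyze $\mathbb{Z}^n \rtimes \mathbb{Z}$ being residually $p$-finite, cf. Lemma \ref{GpZ}), I will structure the proof as: (i) $N/pN \cong \mathbb{F}_p^d$ with the $\bar{\mathbf v}_i$ a basis, so $S$ is a transversal for $pN$ in $N$; (ii) the hypothesis exactly says $S \cap M = \{0\}$; (iii) since $M\cap N \supseteq p^t N$ for suitable $t$ (finite $p$-index), and $S$ meets every coset of $pN$ in $N$ while $S \cap M = \{0\}$, a counting argument forces $[N : M\cap N] = p^d$ and moreover $M \cap N = pN$; then (iv) conclude $M \leq N$: take $m \in M$, then $p^t m \in p^t N \subseteq M\cap N = pN$, so $p^{t-1}m \in N$ (basis argument), and descend to $m \in N$. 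I expect step (iii)–(iv), reconciling the counting with the desired containment, to be the main obstacle, and I would want to double-check whether the claim should be $M \subseteq N$ or $M \subsetneq N$ by tracing how the lemma is invoked; with $M \subseteq N$ the argument above is complete once the finite-$p$-index descent is spelled out.
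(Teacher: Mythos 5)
Your outline never becomes a proof, and the decisive flaw is your step (iii): from ``$S\cap M=\{0\}$'' together with $p^{t}N\subseteq M\cap N$ you assert that ``a counting argument forces $[N:M\cap N]=p^{d}$ and moreover $M\cap N=pN$''. No such argument exists. The hypothesis of the lemma only constrains the $p^{d}$ representatives with coefficients in $\{0,\dots,p-1\}$, and — as you yourself noticed twice (the difference-of-box-sums issue, and the failed reduction that would need $pN\subseteq M$) — it says nothing about combinations whose coefficient vector is merely nonzero mod $p$. Your descent in (iv), like the minimal-$k$ argument of your third paragraph, genuinely requires $M\cap N\subseteq pN$, and nothing in your outline derives this from the stated hypothesis. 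In fact the gap cannot be closed, because the statement as literally written is false: take $p=2$, $d=2$, $N=\mathbb{Z}\oplus 2\mathbb{Z}$ with basis $\mathbf{v}_{1}=(1,0)$, $\mathbf{v}_{2}=(0,2)$, and $M=\{(x,y)\in\mathbb{Z}^{2}: x\equiv y \pmod 4\}$, a subgroup of index $4$. The nontrivial sums $(1,0)$, $(0,2)$, $(1,2)$ all avoid $M$, yet $(1,1)\in M\setminus N$, so $M\not\subseteq N$ (and $N\not\subseteq M$). Here $M\cap N\not\subseteq pN$, which is exactly the condition your ``counting'' was supposed to manufacture.

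For comparison, the paper's own proof goes another way — it picks $s$ with $p^{s}N\leq M$, takes compatible bases $\mathbf{m}_{i}$ of $M$ and $\mathbf{n}_{i}$ of $N$ with $p^{r_{i}}\mathbf{m}_{i}=p^{s}\mathbf{n}_{i}$, and tries to exclude $r_{i}>s$ by claiming that the hypothesis makes $(N+M)/M$ of rank $d$ — and it stumbles at the same pressure point: ``no nontrivial box sum lies in $M$'' is used as if it said that the images of the $\mathbf{v}_{i}$ are $\mathbb{F}_{p}$-independent in $N/((M\cap N)+pN)$, which is equivalent to the strictly stronger condition $M\cap N\subseteq pN$ (the counterexample above defeats that step too, since there $(N+M)/M\cong\mathbb{Z}_{4}$). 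Under the strengthened hypothesis — no combination $\sum_{i}\alpha_{i}\mathbf{v}_{i}$ with $(\alpha_{1},\dots,\alpha_{d})\not\equiv(0,\dots,0)\bmod p$ lies in $M$ — your own minimal-$k$ descent finishes in three lines: if $m\in M\setminus N$, choose $k\geq 1$ minimal with $p^{k}m\in N$; then $p^{k}m\in M\cap N\subseteq pN$, so $p^{k-1}m\in N$, contradicting minimality; hence $M\subseteq N$, and properly so since $\mathbf{v}_{1}\in N\setminus M$. So your instinct about where the difficulty sits was correct; the error is papering over it with an unjustified counting claim instead of recognizing that the hypothesis itself must be strengthened (which is also what the application in Theorem \ref{Fpres} would need).
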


\begin{proof}
Since $M$ and $N$ are submodules of  $p$-indexes, there is a natural number
 $s$ such that $p^{s}N < M$. Moreover,  $M$  and  $N$ have consistent bases
$\mathbf{m}_{1},\dots, \mathbf{m}_{d}\in M$ and  $\mathbf{n}_{1},\dots, \mathbf{n}_{d}\in N$ such that
 $$
 p^{r_{i}}\mathbf{m}_{i}=p^{s}\mathbf{n}_{i},
$$
where $i=1,\dots, d,$ $0\leq r_{1}\leq\dots\leq r_{d}$.
Submodule  $N$ contains all  $\mathbf{m}_{i}$ for which  $r_{i}\leq s$, since
 $\mathbf{m}_{i}=p^{s-r_{i}}\mathbf{n}_{i}$. If $r_{i}>s$, then $p^{r_{i}-s}\mathbf{m}_{i}=\mathbf{n}_{i}\in M$.

By assumption any non-trivial sum of the form $\sum_{i=1}^{d}\alpha_{i}\mathbf{v}_{i}$,
where $\alpha_{i}\!\in\!\{0, 1,\dots, p-1\}$ does not  lie in  $M$. Hence, there is an epimorphism of the abelian group
 $(N+M)/M$ to the elementary abelian group
$\mathbb{Z}_{p}\oplus\dots\oplus\mathbb{Z}_{p}$ that is the direct sum of $d$ summands.
So, the rank of the  quotient  $(N+M)/M\leq\mathbb{Z}^{d}/M$ is equal  to $d$.
Hence,  the case  $r_{i}>s$,
i.e.  $p^{r_{i}-s}\mathbf{m}_{i}=\mathbf{n}_{i}\in M$ is not possible, since in this case  the  rank
of   $(N+M)/M$ less than $d$.
Hence, we get inclusion  $M<N$.
\end{proof}

Now we are ready to formulate the main result of the present section.

\begin{theorem}\label{Fpres}
Let $G = F_{n}\rtimes_{\varphi}\mathbb{Z}$.
If all groups $\widehat{G}_i = \gamma_{i}(F_{n})/\gamma_{i+1}(F_{n})\rtimes_{\widehat{\varphi}_{i}}\mathbb{Z}, i\geq1,$
are residually    $p$-finite, then  $G$ is residually    $p$-finite.
 \end{theorem}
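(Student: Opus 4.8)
The plan is to produce, for every non-trivial element $g \in G = F_n \rtimes_\varphi \mathbb Z$, a homomorphism onto a finite $p$-group that does not kill $g$. If $g$ projects non-trivially to $\mathbb Z$, then the composition $G \to \mathbb Z \to \mathbb Z/p^k\mathbb Z$ works for suitable $k$, so we may assume $g \in F_n \setminus \{1\}$. Since $\bigcap_i \gamma_i(F_n) = 1$, there is a unique $i \geq 1$ with $g \in \gamma_i(F_n) \setminus \gamma_{i+1}(F_n)$; write $\bar g$ for its image in $M_i := \gamma_i(F_n)/\gamma_{i+1}(F_n)$, a non-zero element of a finitely generated free abelian group. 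The idea is to transport the problem into the group $\widehat G_i = M_i \rtimes_{\widehat\varphi_i}\mathbb Z$, which by hypothesis is residually $p$-finite, and then lift a separating quotient of $\widehat G_i$ back to $G$.

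\textbf{Construction of the quotient.} Applying residual $p$-finiteness of $\widehat G_i$ to $\bar g$, and using Lemma~\ref{GpZ} (with $\mathbb Z^n$ replaced by the free abelian group $M_i$), we obtain a \emph{nilpotent} quotient $P \rtimes_{\widehat\psi}\mathbb Z$ of $\widehat G_i$, where $P$ is a finite abelian $p$-group which is a quotient of $M_i$ by a submodule $N_i \lhd \widehat G_i$ of finite $p$-index, $\widehat\psi$ is induced by $\widehat\varphi_i$, and the image of $\bar g$ is non-trivial. Pull $N_i$ back to a normal subgroup $\widehat N_i \lhd G$ containing $\gamma_{i+1}(F_n)$ with $g \notin \widehat N_i$ and $|\,\gamma_i(F_n):\widehat N_i| = p^{s}$ for some $s$. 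Now consider $G/\widehat N_i$. It contains the finite abelian $p$-group $\gamma_i(F_n)/\widehat N_i \cong P$ as a normal subgroup, but the quotient by it is $G/\gamma_i(F_n) \cdot \mathbb Z$-type — more precisely $G/\widehat N_i$ is an extension of $F_n/\gamma_i(F_n) \rtimes \mathbb Z$ by a finite abelian $p$-group. The subgroup $F_n/\gamma_i(F_n)$ is a free nilpotent group, still infinite, so $G/\widehat N_i$ is not yet finite: we must iterate.

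\textbf{Iteration and the main obstacle.} The plan is to peel off the quotients $M_1, M_2, \dots, M_{i}$ one layer at a time. Having already separated $g$ modulo $\gamma_i$, we must build a finite $p$-group quotient of $F_n/\gamma_{i}(F_n) \rtimes \mathbb Z$ — equivalently of the free nilpotent-by-cyclic group $N_{n,i-1}\rtimes\mathbb Z$ — compatible with the quotient $P$ already chosen on the top layer. For this I would argue by downward induction on the nilpotency class, at each stage choosing a submodule of finite $p$-index in $M_j$ that is $\widehat\varphi_j$-invariant (possible since $\widehat G_j$ is residually $p$-finite, so $\bigcap_m \gamma_m^{(p)}(\widehat G_j) = 1$ forces, via Lemma~\ref{Pphi1} applied to each $M_j$, that $P_{\widehat\varphi_j}(1)^{m}M_j$ becomes arbitrarily $p$-deep), and then use Lemma~\ref{NGp} together with Lemma~\ref{MN} to control how these layerwise choices assemble into a single finite $p$-group: Lemma~\ref{MN} guarantees that the intersection of the chosen submodule with the ambient $\mathbb Z^d$ is genuinely smaller, so the successive extensions stay finite $p$-groups, and Lemma~\ref{NGp} converts the bounded exponent of the commutator subgroup at each nilpotent stage into a bound on the depth needed to trivialise that stage. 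The main difficulty — and the reason the lemmas \ref{NGp}, \ref{MN} are stated — is precisely this \emph{coherence}: a quotient that works on layer $M_j$ need not be normal in $G$, nor compatible with the quotient already fixed on the layers above, so one must re-choose the submodules $N_j$ simultaneously (e.g. replacing each by a deep enough $p$-power multiple $p^{t}M_j$, which is automatically characteristic hence $G$-normal) and verify that the resulting finite quotient $G/K$, with $K = \bigcap_{j\le i}\widehat N_j'$ for the re-chosen $\widehat N_j'$, is a $p$-group not containing $g$. Once this coherent choice is made, $G/K$ is a finite $p$-group separating $g$, and since $g$ was arbitrary, $G$ is residually $p$-finite.
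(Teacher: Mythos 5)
Your reduction to $g\in F_n\setminus\{1\}$ and the use of Lemma \ref{GpZ} on the top layer $M_i=\gamma_i(F_n)/\gamma_{i+1}(F_n)$ are fine, but the assembly step --- the part you yourself call the main obstacle --- is not carried out, and as sketched it fails. Each $\widehat N_j'$ in your construction is (the preimage in $\gamma_j(F_n)$ of) a finite-index submodule of $M_j$, so $\widehat N_j'\subseteq\gamma_j(F_n)$; hence $K=\bigcap_{j\le i}\widehat N_j'\subseteq\gamma_i(F_n)$ has infinite index in $F_n$ when $i\ge 2$, and $G/K$ surjects onto the infinite group $\bigl(F_n/\gamma_i(F_n)\bigr)\rtimes\mathbb Z$, so it is not a finite $p$-group. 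Replacing the layer submodules by $p^tM_j$ does not repair this: the difficulty is not normality of the $N_j$, but producing a single $\varphi$-invariant subgroup of $F_n$ of finite $p$-index avoiding $g$, \emph{and} proving that $t$ acts on the resulting finite $p$-group nilpotently, so that the $\mathbb Z$-factor can be cut down to a finite cyclic $p$-group. Neither point is established in your sketch, and the decisive use of the hypothesis on the lower layers $\widehat G_j$, $j<i$ --- forcing iterated commutators with powers of $t$ back into the chosen finite-index subgroup at every layer --- never appears. You also assign Lemma \ref{MN} the wrong role: it is not about keeping successive extensions finite, but about showing that the separating submodule $\overline M_j$ supplied by Lemma \ref{GpZ} is actually \emph{contained} in the layer $\mathbf P_j$ of the chosen subgroup.

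The paper resolves the coherence problem in the opposite order, and this is the missing idea. Since $F_n$ is residually $p$-finite, one first fixes a single verbal (hence characteristic) subgroup $P\le F_n$ of finite $p$-index with $g\notin P$; characteristic implies $\varphi$-invariant, so $G/P\cong\mathbf P\rtimes_{\widetilde\varphi}\mathbb Z$ with $\mathbf P=F_n/P$ a finite $p$-group, and the layers $P_j=P\cap\gamma_j(F_n)$ are automatically compatible. The hypothesis on the $\widehat G_j$ is then used, via Lemma \ref{GpZ} and Lemma \ref{MN}, to show layer by layer that $[g_j,t^{n_1},\dots,t^{n_{r_j}}]\in P\gamma_{j+1}(F_n)$ for every $g_j\in\gamma_j(F_n)\setminus P$, whence $[\mathbf P,t^{n_1},\dots,t^{n_r}]=1$ and $\mathbf P\rtimes\mathbb Z$ is nilpotent; finally Lemma \ref{NGp}, applied with $\mathbf P^{p^{k_0}}=1$, gives $[t^{p^{k_0(l-1)}},\mathbf P]=1$, so $\mathbb Z$ may be replaced by $\mathbb Z_{p^k}$, yielding a finite $p$-group quotient of $G$ in which $g$ (hence every $gt^m$) survives. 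Without the characteristic-subgroup step and the nilpotence argument, your proposal remains an outline with a genuine gap at its central point.
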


\begin{proof}
Let $gt^{m}$ be some element of  $G$,
where $g\in F_{n}$ is non-trivial. Let us construct a finite $p$-group which is a homomorphic image of $G$ and
 the image of  $gt^{m}$ is non-trivial.

Since $F_{n}$ is residually  $p$-finite for any prime  $p$, it contains a normal subgroup of finite  $p$-index,
which does not contains  $g$. This subgroup contains a characteristic (verbal) subgroup $P$ such that the quotient
 $\mathbf{P} = F_n/P$
is also finite  $p$-group
 (\cite[Exercise 15.2.3]{KM}).
Then $\mathbf{P}\rtimes_{\widetilde{\varphi}}\mathbb{Z}$ is the quotient  of  $G$ by  $P$.
Here $\widetilde{\varphi}$ is the automorphism of the quotient
$\mathbf{P}$ which is induced by  $\varphi$. Let us show that
$\mathbf{P}\rtimes_{\widetilde{\varphi}}\mathbb{Z}$ is a nilpotent group.
Since $\gamma_{2}(\mathbf{P}\rtimes_{\widetilde{\varphi}}\mathbb{Z})\leq\mathbf{P}$
and $\mathbf{P}$ is nilpotent, it is enough to prove that  $[\mathbf{P}, t^{n_{1}} ,\dots, t^{n_{r}}]=1$ for some fixed
 $r$.

Find a minimal $s$ such that  $\gamma_{s+1}(F_{n}) \leq P<F_{n}$. For $1\leq i\leq s$ put
 $P_{i}=P\cap\gamma_{i}(F_{n})$. Then $\mathbf{P}_{i}=P_{i}/P_{i+1}$ is a submodule of finite  $p$-index of the free  $\mathbb{Z}$-module
$\gamma_{i}(F_{n})/\gamma_{i+1}(F_{n})$, and hence, is a free submodule of the same finite rank. For
 $g_{i} \in \gamma_{i}(F_{n})\backslash P$
denote by  $\overline{g}_{i}$  its image in the quotient $\gamma_{i}(F_{n})/\gamma_{i+1}(F_{n}).$

By assumption all groups  $\widehat{G}_i$
are residually $p$-finite. By Lemma  \ref{GpZ} they are residually  nilpotent groups of the form
 $G_{p}\rtimes_{\widehat{\varphi}}\mathbb{Z}$, where $G_{p}$ is a finite abelian  $p$-group.
Hence, in any   $\mathbb{Z}$-module  $\gamma_{i}(F_{n})/\gamma_{i+1}(F_{n})$
there exists a submodule  $\overline{M}_{i}$ of finite  $p$-index, which does not contains non-trivial  linear combinations of generators  of $\mathbf{P}_{i}$
with coefficients from the set
$\{ 0, 1, \ldots, p-1 \}$. By Lemma \ref{MN} we have inclusion
$\overline{M}_{i}<\mathbf{P}_{i}$. Since the quotients
$G_{p}\rtimes_{\widehat{\varphi}}\mathbb{Z}$ of groups
$\gamma_{i}(F_{n})/\gamma_{i+1}(F_{n})\rtimes_{\widehat{\varphi}_{i}}\mathbb{Z}$ are nilpotent, we get that for any  $\overline{g}_{i}$,
the commutator  $[\overline{g}_{i}, t^{n_{1}} ,\dots, t^{n_{r_{i}}}]\in\overline{M}_{i}<\mathbf{P}_{i}$
for some fixed  $r_{i}$.
Hence,
$$
[g_{i},t^{n_{1}} ,\dots, t^{n_{r_{i}}}]\in P_{i}\gamma_{i+1}(F_{n})\leq P\gamma_{i+1}(F_{n}),
$$
for any $g_{i}\in\gamma_{i}(F_{n})\backslash P$, and fixed  $r_{i}$.

Further, if  $[g_{i},t^{n_{1}} ,\dots, t^{n_{r_{i}}}]=pg_{i+1}$, where $p\in P$, then from the commutator identity
$$
[ab, c]=[a, c][a, c, b][b, c]
$$
and the fact that   $P$ is a characteristic subgroup, we get
$$
[pg_{i+1}, t^{n_{1}},\dots, t^{n_{r}}]\in P[g_{i+1}, t^{n_{1}},\dots, t^{n_{r}}].
$$
Since $[g_{i+1},t^{n_{1}} ,\dots, t^{n_{r_{i+1}}}]\in  P\gamma_{i+2}(F_{n}),$
we have
$$
[pg_{i+1}, t^{n_{1}} ,\dots, t^{n_{r_{i+1}}}]\in P\gamma_{i+2}(F_{n}).
$$

By induction we get that for any element
$g\in F_{n}\backslash P$ the commutator  $[g,t^{n_{1}} ,\dots, t^{n_{r}}]$, where $r=r_{1}+r_{2}+\dots+r_{s}$ lies in  $P\gamma_{s+1}(F_{n})=P\leqslant F_{n}$.
It means that $[\mathbf{P}, t^{n_{1}} ,\dots, t^{n_{r}}]=1$, i.e.  $\mathbf{P}\rtimes_{\widetilde{\varphi}}\mathbb{Z}$ is a nilpotent group.

Further, since  $p$-group $\mathbf{P}$ is finite, we can take a minimal natural number  $k_{0}$ such that  $\mathbf{P}^{p^{k_{0}}}=1$.
Moreover,  $\mathbf{P}$ contains the commutator subgroup of  $\mathbf{P}\rtimes_{\widetilde{\varphi}}\mathbb{Z}$.
By Lemma \ref{NGp}, using identity  \ $[t, \mathbf{P}]^{p^{k_{0}}}=1$ \ we get the identity
 \ $[t^{p^{k_{0}(l-1)}}, \mathbf{P}]=1$, \ where  $l$ is the  nilpotency class of
 $\mathbf{P}\rtimes_{\widetilde{\varphi}}\mathbb{Z}$. Hence,
$$
\mathbf{P}\rtimes_{\widetilde{\varphi}}\mathbb{Z}=
\left( \mathbf{P}\rtimes_{\widetilde{\varphi}}\mathbb{Z}_{p^{k_{0}(l-1)}}\right)\times\mathbb{Z}
$$
 and there exists a homomorphism of
 $\mathbf{P}\rtimes_{\widetilde{\varphi}}\mathbb{Z}$
to a finite  $p$-group $\mathbf{P}\rtimes_{\widetilde{\varphi}}\mathbb{Z}_{p^{k}}$,
where $k\geq k_{0}(l-1)$.

Finally, for any non-trivial  $g\in F_{n}$ we constructed a homomorphism of $G$  to a finite $p$-group    $\mathbf{P}\rtimes_{\widetilde{\varphi}}\mathbb{Z}_{p^{k}}$ such that the image of
 $g$ is non-trivial. It is evident that  the images of all elements  $gt^{m}$ for all $m$ are non-trivial.
On the other side, for any  element of the form  $t^{m}, m\in\mathbb{Z}\backslash\{0\}$ there exists a  homomorphism of
 $G$ to a finite
 $p$-group $\mathbb{Z}_{p^{k}}, p^{k}>m,$ such that the image of $t^{m}$ is non-trivial.
Hence, we proved that  $G$ is residually $p$-finite.
\end{proof}

As corollary we get

\begin{cor}\label{Npres}
Let $N_{n,c}$ be a free  nilpotent group of rank $n$ and class $c$, $G = N_{n,c}\rtimes_{\varphi}\mathbb{Z}$.
If all groups
$$
\gamma_{i}(N_{n,c})/\gamma_{i+1}(N_{n,c})\rtimes_{\widehat{\varphi}_{i}}\mathbb{Z}, ~~~1\leq i\leq c,
$$
are residually  $p$-finite, then  $G$ is residually  $p$-finite. Here  $\widehat{\varphi}_{i}$ is the automorphism of  $\mathbb{Z}$-module
$\gamma_{i}(N_{n,c})/\gamma_{i+1}(N_{n,c})$ that is induced by  $\varphi$.
\end{cor}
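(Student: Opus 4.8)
The plan is to reproduce the proof of Theorem \ref{Fpres} with $N_{n,c}$ in the role of $F_n$; the only thing that really has to be checked is that every ingredient used there is still available, and, crucially, that the finiteness of the lower central series of $N_{n,c}$ forces the argument to involve only the quotients $\widehat{G}_i$ with $1\le i\le c$, which are exactly the groups assumed to be residually $p$-finite.

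First I would record the facts about $N_{n,c}$ that keep the argument of Theorem \ref{Fpres} alive. Being a finitely generated torsion-free nilpotent group, $N_{n,c}$ is residually $p$-finite for every prime $p$; moreover, exactly as in the proof of Theorem \ref{Fpres}, every normal subgroup of $p$-power index in $N_{n,c}$ contains a characteristic (verbal) subgroup $P$ with $N_{n,c}/P$ a finite $p$-group. For $1\le i\le c$ the quotient $\gamma_i(N_{n,c})/\gamma_{i+1}(N_{n,c})$ is free abelian of finite rank, and $\gamma_{c+1}(N_{n,c})=1$; hence for any such $P$ the minimal $s$ with $\gamma_{s+1}(N_{n,c})\le P$ automatically satisfies $s\le c$.

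Then, given a nontrivial element $g t^m$ of $G$ (the case $g=1$, i.e. the element $t^m$, being separated by the obvious homomorphism $G\to\mathbb{Z}_{p^k}$ with $p^k>|m|$), I would use the residual $p$-finiteness of $N_{n,c}$ to pick a characteristic subgroup $P\lhd N_{n,c}$ of $p$-power index with $g\notin P$, and then run the argument of Theorem \ref{Fpres} verbatim for $G$. Setting $\mathbf{P}=N_{n,c}/P$ and $P_i=P\cap\gamma_i(N_{n,c})$ for $1\le i\le s$, one uses the residual $p$-finiteness of the groups $\widehat{G}_i=\gamma_i(N_{n,c})/\gamma_{i+1}(N_{n,c})\rtimes_{\widehat{\varphi}_i}\mathbb{Z}$ for $1\le i\le s\le c$ (this is the hypothesis), together with Lemmas \ref{GpZ} and \ref{MN}, to deduce $[\mathbf{P},t^{n_{1}},\dots,t^{n_{r}}]=1$ for suitable $n_j$, hence that $\mathbf{P}\rtimes_{\widetilde{\varphi}}\mathbb{Z}$ is nilpotent; Lemma \ref{NGp} then lets one pass to a finite $p$-quotient $\mathbf{P}\rtimes_{\widetilde{\varphi}}\mathbb{Z}_{p^k}$ in which $g t^m$ survives. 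This produces the required finite $p$-quotient, so $G$ is residually $p$-finite.

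The only genuine point of the proof — and the place where the hypothesis is exactly what is needed — is the bound $s\le c$: it is what guarantees that, although the proof of Theorem \ref{Fpres} literally invokes ``all'' $\widehat{G}_i$, here only the finitely many $\widehat{G}_1,\dots,\widehat{G}_c$ ever enter. (Alternatively, one can deduce the corollary from Theorem \ref{Fpres} itself by lifting $\varphi$ to an automorphism $\widetilde{\varphi}$ of $F_n$, noting that $G=(F_n\rtimes_{\widetilde{\varphi}}\mathbb{Z})/\gamma_{c+1}(F_n)$ and that, for elements lying outside $\gamma_{c+1}(F_n)$, the construction of Theorem \ref{Fpres} performed with $P\supseteq\gamma_{c+1}(F_n)$ uses only $\widehat{G}_1,\dots,\widehat{G}_c$ and factors through $G$; but the direct repetition seems cleaner.)
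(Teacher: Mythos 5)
Your main argument is correct and is essentially the paper's: the corollary is obtained by running the proof of Theorem \ref{Fpres} verbatim with $N_{n,c}$ in place of $F_n$, using that $N_{n,c}$ is residually $p$-finite, that its lower central quotients $\gamma_i/\gamma_{i+1}$ are free abelian of finite rank, and that $\gamma_{c+1}(N_{n,c})=1$ forces the minimal $s$ with $\gamma_{s+1}(N_{n,c})\le P$ to satisfy $s\le c$, so only $\widehat{G}_1,\dots,\widehat{G}_c$ are ever invoked. Only your parenthetical alternative is doubtful, since an automorphism of $N_{n,c}$ need not lift to an automorphism of $F_n$ (non-tame automorphisms exist), but your proof does not rely on it.
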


As was proved in \cite{M}, the residually nilpotence of $\overline{G}_k$, $k \geq 1$, does not imply the residually nilpotence  of $G = F_{n}\rtimes_{\varphi}\mathbb{Z}$.
Using  Theorem \ref{Fpres} it is possible to prove that the residually $p$-finiteness of $\overline{G}_k$, $k \geq 1$, implies  the residually $p$-finiteness  of
$G = F_{n}\rtimes_{\varphi}\mathbb{Z}$.

\begin{theorem}\label{Fdres}
Let $G = F_{n}\rtimes_{\varphi}\mathbb{Z}$. If all groups  $\overline{G}_{k}=(F^{ab}_{n})^{\otimes k}\rtimes_{\overline{\varphi}_{k}}\mathbb{Z}, \ k\geq1,$
are residually  $p$-finite, then
$G$ is residually  $p$-finite.
In particular, the length of the lower central series of $G$ is equal to  $\omega$.
 \end{theorem}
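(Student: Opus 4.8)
The plan is to derive the statement directly from Theorem~\ref{Fpres} and Corollary~\ref{GGcri}: together these already contain all the work, so the proof is essentially a two-line reduction. First I would translate the hypothesis on the modules $(F^{ab}_n)^{\otimes k}$ into the hypothesis required by Theorem~\ref{Fpres}. By assumption each $\overline{G}_k = (F^{ab}_n)^{\otimes k}\rtimes_{\overline{\varphi}_k}\mathbb{Z}$ is residually $p$-finite for $k\geq 1$, so Corollary~\ref{GGcri}(b), applied for every $k$, gives that each $\widehat{G}_k = \gamma_k(F_n)/\gamma_{k+1}(F_n)\rtimes_{\widehat{\varphi}_k}\mathbb{Z}$ is residually $p$-finite. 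This is precisely the hypothesis of Theorem~\ref{Fpres}, which then yields that $G = F_n\rtimes_\varphi\mathbb{Z}$ is residually $p$-finite.

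For the last assertion I would argue as follows. A residually $p$-finite group is residually nilpotent (its quotients onto finite $p$-groups are nilpotent), so $\gamma_\omega(G)=\bigcap_{i\geq 1}\gamma_i(G)=1$, and hence the length of the lower central series of $G$ is at most $\omega$. Assuming $n\geq 2$, so that $F_n\leq G$ is a non-abelian free subgroup and $G$ is not nilpotent, the series cannot stabilize at any finite stage: if $\gamma_i(G)=\gamma_{i+1}(G)$ for some finite $i$, then $\gamma_i(G)=\gamma_\omega(G)=1$, forcing $G$ to be nilpotent, which is impossible. Therefore $\gamma_i(G)\supsetneq\gamma_{i+1}(G)$ for all finite $i$, while $\gamma_\omega(G)=\gamma_{\omega+1}(G)=1$, so the length of the lower central series of $G$ equals $\omega$.

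I do not expect a genuine obstacle in this argument, since the heavy lifting is already in Theorem~\ref{Fpres} (through Lemmas~\ref{NGp}, \ref{MN}, \ref{GpZ}) and in Corollary~\ref{GGcri} (through Propositions~\ref{ChP} and \ref{MFcri}). The only step deserving a sentence of care is the final reduction, where one uses the non-abelian free subgroup $F_n$ — i.e. $n\geq 2$ — to exclude a finite length; for $n=1$ the group $G$ can be abelian, and the ``length $\omega$'' clause should be read with that caveat in mind.
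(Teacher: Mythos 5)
Your proposal is correct and follows essentially the same route as the paper: Corollary~\ref{GGcri}(b) transfers the hypothesis from the groups $\overline{G}_k$ to the groups $\widehat{G}_k$, and Theorem~\ref{Fpres} then gives residual $p$-finiteness of $G$. Your extra care about the final clause (using the non-abelian subgroup $F_n$, $n\geq 2$, to rule out finite length, with the $n=1$ caveat) is a reasonable elaboration of a point the paper leaves implicit.
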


\begin{proof}
 Corollary \ref{GGcri} implies that from the residual
$p$-finiteness of groups
$\overline{G}_{k}, \ k\geq1,$
follows the residual  $p$-finiteness of all groups  \
$\widehat{G}_{k}=\gamma_{k}(F_{n})/\gamma_{k+1}(F_{n})\rtimes_{\widehat{\varphi}_{k}}\mathbb{Z},$ \ $k\geq1.$
By Theorem \ref{Fpres} the group $G$ is residually  $p$-finite. In particular, the length of its lower central series is equal to $\omega$.
\end{proof}

The following theorem gives a simple way to prove the residually nilpotence.
\begin{theorem}\label{Fdres1}
Let $G = F_{n}\rtimes_{\varphi}\mathbb{Z}$, $\varphi\in\mathrm{Aut}(F_{n})$ and $[\overline{\varphi}]\in\mathrm{GL}_{n}(\mathbb{Z})$. If all eigenvalues of  $[\overline{\varphi}]$ are integers, then  $G$ is residually nilpotent. Wherein,
 \begin{itemize}
\item[a)] if all the eigenvalues are equal to  $1$, then $G$ is residually  $p$-finite for any prime   $p$;
\item[b)] if there is at least one eigenvalue equal to  $-1$, then $G$ is residually $2$-finite.
\end{itemize}
 \end{theorem}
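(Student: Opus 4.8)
The plan is to reduce the statement to the abelianized tensor powers $\overline{G}_k$ and then apply Theorem \ref{Fdres} together with the Aschenbrenner--Friedl criterion of Proposition \ref{MFcri}. First I would observe that the hypotheses pin the eigenvalues down completely: since $[\overline{\varphi}]\in\mathrm{GL}_n(\mathbb{Z})$ we have $\det[\overline{\varphi}]=\pm1$, and an $n$-tuple of nonzero integers (eigenvalues of an invertible integer matrix) whose product is $\pm1$ must consist entirely of $+1$'s and $-1$'s. Thus "all eigenvalues are integers" is here equivalent to "every eigenvalue lies in $\{1,-1\}$", and cases (a) and (b) of the statement are exhaustive.

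Next I would pass to $\overline{G}_k=(F^{ab}_n)^{\otimes k}\rtimes_{\overline{\varphi}_k}\mathbb{Z}$. The automorphism $\overline{\varphi}_k$ acts on the tensor power by $v_1\otimes\cdots\otimes v_k\mapsto\overline{\varphi}(v_1)\otimes\cdots\otimes\overline{\varphi}(v_k)$, so its matrix is the $k$-fold Kronecker power $[\overline{\varphi}]^{\otimes k}$ and its eigenvalues are the $k$-fold products of eigenvalues of $[\overline{\varphi}]$; in particular they again all lie in $\{1,-1\}$. Hence the characteristic polynomial $P_{\overline{\varphi}_k}(x)$ splits over $\mathbb{Z}$ as $(x-1)^{a_k}(x+1)^{b_k}$ with $a_k+b_k=n^k$, so its only possible irreducible factors over $\mathbb{Z}$ are $x-1$ and $x+1$, which take the values $0$ and $2$, respectively, at $x=1$.

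Now I would invoke Proposition \ref{MFcri}(b). In case (a) all eigenvalues of $[\overline{\varphi}]$, hence of each $[\overline{\varphi}_k]$, equal $1$, so $P_{\overline{\varphi}_k}(x)=(x-1)^{n^k}$ and its unique irreducible factor $x-1$ vanishes at $1$, hence lies in $p\mathbb{Z}$ for every prime $p$; therefore $\overline{G}_k$ is residually $p$-finite for all $k\geq1$ and all $p$, and Theorem \ref{Fdres} gives that $G$ is residually $p$-finite for every prime $p$. In case (b) the eigenvalues of each $[\overline{\varphi}_k]$ are $\pm1$, so every irreducible factor of $P_{\overline{\varphi}_k}(x)$ evaluates to $0$ or $2$ at $x=1$, both lying in $2\mathbb{Z}$; hence $\overline{G}_k$ is residually $2$-finite for all $k\geq1$, and Theorem \ref{Fdres} yields that $G$ is residually $2$-finite. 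In either case $G$ is residually $p$-finite for some prime $p$, and since finite $p$-groups are nilpotent, $G$ is residually nilpotent, proving the main assertion together with the refinements (a) and (b).

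There is essentially no deep obstacle here once Theorem \ref{Fdres} and Proposition \ref{MFcri} are in hand: the only points needing a little care are the identification of $[\overline{\varphi}_k]$ with the Kronecker power $[\overline{\varphi}]^{\otimes k}$ (and the consequent control of its spectrum), the elementary fact that an integer matrix in $\mathrm{GL}_n(\mathbb{Z})$ with integer eigenvalues has all eigenvalues $\pm1$, and the bookkeeping that (a) and (b) exhaust all possibilities. The same argument also covers the degenerate case $n=1$, where $F_1=\mathbb{Z}$ and $\overline{\varphi}=\pm1$.
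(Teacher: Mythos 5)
Your proof is correct and follows essentially the same route as the paper: pass to the groups $\overline{G}_k$, note that the spectrum of $[\overline{\varphi}_k]=[\overline{\varphi}]^{\otimes k}$ consists of $\pm1$ so the only irreducible factors of $P_{\overline{\varphi}_k}$ are $x-1$ and $x+1$, then apply Proposition \ref{MFcri}(b) and Theorem \ref{Fdres}. The only cosmetic differences are that the paper obtains the spectrum by conjugating to (uni)triangular matrices rather than quoting the Kronecker-product eigenvalue rule, and that you make explicit the observation (left implicit in the paper) that integer eigenvalues of a matrix in $\mathrm{GL}_n(\mathbb{Z})$ must all be $\pm1$, so cases a) and b) exhaust the hypothesis.
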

\begin{proof} a) By assumption, the characteristic polynomial of the automorphism $\overline{\varphi}$ is $P_{\overline{\varphi}}(x)=(x-1)^{n}$.
Since  $[\overline{\varphi}]\in\mathrm{GL}_{n}(\mathbb{Z})$, then
$[\overline{\varphi}]$ is conjugated to some unitriangular matrix $U\in\mathrm{GL}_{n}(\mathbb{Z})$. Hence, the tensor product
$[\overline{\varphi}_{k}]=[\overline{\varphi}]\otimes\dots\otimes[\overline{\varphi}]$
is conjugated  to the unitriangular matrix  $U\otimes\dots\otimes U\in\mathrm{GL}_{n^{k}}(\mathbb{Z})$. Hence,
$\overline{\varphi}_{k}\in\mathrm{Aut}\left((F^{ab}_{n})^{\otimes k}\right)$ has the characteristic polynomial
 $P_{\overline{\varphi}_{k}}(x)=(x-1)^{n^{k}}$, with one irreducible factor  $p_{1}(x)=x-1$.
Since $p_{1}(1)=0\in p\mathbb{Z}$ for any prime  $p$, then by Proposition \ref{MFcri} we get that any group
$\overline{G}_k, \ k\geq1,$ is residually  $p$-finite. By Theorem  \ref{Fdres} we get that  $G$ is residually
 $p$-finite for any  $p$.

 b) By assumption the characteristic polynomial of the automorphism $\overline{\varphi}$ is
$$
P_{\overline{\varphi}}(x)=(x-1)^{m_{1}}(x+1)^{m_{2}},~~m_{1}+m_{2}=n.
$$
 The matrix  $[\overline{\varphi}]\in\mathrm{GL}_{n}(\mathbb{Z})$  is conjugated  to some   triangular matrix
$T\in\mathrm{GL}_{n}(\mathbb{Z})$ in which $m_{1}$ elements on the diagonal  are equal to  $1$  and $m_{2}$ elements on the diagonal are equal to $-1$. Hence, the tensor product
$[\overline{\varphi}_{k}]=[\overline{\varphi}]\otimes\dots\otimes[\overline{\varphi}]$
is conjugated to the triangular matrix $T\otimes\dots\otimes T\in\mathrm{GL}_{n^{k}}(\mathbb{Z})$ with diagonal elements
 $1$ and  $-1$. Hence, the characteristic polynomial  of
$\overline{\varphi}_{k},$
is $P_{\overline{\varphi}_{k}}(x)=(x-1)^{m_{1k}}(x+1)^{m_{2k}}$,
where $m_{1k}m_{2k}\neq0$ and $m_{1k}+m_{2k}=n^k$.
This characteristic polynomial has two irreducible factors $p_{1}(x)=x-1$ and $p_{2}(x)=x+1$. Since  $p_{1}(1)=0$ and  $p_{2}(1)=2$, then
 $p_{1}(1), p_{2}(1)\in2\mathbb{Z}$. By Proposition \ref{MFcri} any group  $\overline{G}_{k}$, $k\geq1,$
is residually $2$-finite. Theorem  \ref{Fdres} implies that $G$ is residually  $2$-finite.
\end{proof}

\section{Groups $F_2 \rtimes_{\varphi} \mathbb{Z}$} \label{s8}

In this section we are considering groups  $G = F_2 {\rtimes}_\varphi \mathbb{Z}$ that are  semi-direct product of free group  $F_2=\left\langle x,y \right\rangle$ of rank 2
and the infinite cyclic group  $\mathbb{Z}=\left\langle t \right\rangle$:
$$
G=\left\langle \, x, y, t  \, | \, t^{-1}xt=\varphi(x),\,\, t^{-1}yt=\varphi(y)   \,   \right\rangle ,
$$
where $\varphi$ is an automorphism of  $F_2$. The automorphism $\varphi$ induces the automorphism  $\overline{\varphi}$
of the abelianization  $F^{ab}_{2}=F_2/\gamma_2 (F_2)$. In the present section, using results of Sections  \ref{FnZ} and \ref{pFnZ} we prove the following
criteria of the residually nilpotence.
\begin{theorem}\label{NAF2Z}
The group  $G = F_2 {\rtimes}_\varphi \mathbb{Z}$  is residually nilpotent if and only if  $\mathrm{det}
[\overline{\varphi}]=1$ and $\mathrm{tr}[\overline{\varphi}]\not\in \{1, 3 \}$, or $\mathrm{det}
[\overline{\varphi}]=-1$ and $\mathrm{tr}[\overline{\varphi}]\equiv0~ (\mathrm{mod}~ 2)$. At the same time, if $\mathrm{det}
[\overline{\varphi}]=1$, then $G$ is residually $p$-finite for any prime divisor of $\mathrm{tr}[\overline{\varphi}]-2$, and if  $\mathrm{det}[\overline{\varphi}]=-1$, then  $G$ is residually  $2$-finite.
\end{theorem}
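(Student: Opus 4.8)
The plan is to split according to the value of $\det[\overline{\varphi}]\in\{1,-1\}$ and to abbreviate $\tau:=\mathrm{tr}[\overline{\varphi}]$, so that $[\overline{\varphi}]$ has characteristic polynomial $P_{\overline{\varphi}}(x)=x^{2}-\tau x+\det[\overline{\varphi}]$ and $[\overline{\varphi}_{k}]=[\overline{\varphi}]^{\otimes k}$ is a $2^{k}\times 2^{k}$ integer matrix.

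\emph{Sufficiency.} Suppose first $\det[\overline{\varphi}]=1$ and $\tau\notin\{1,3\}$. If $\tau=2$, all eigenvalues of $[\overline{\varphi}]$ equal $1$ and Theorem \ref{Fdres1}(a) already gives that $G$ is residually $p$-finite for every prime $p$. If $\tau\neq 2$, then $\tau-2\notin\{-1,0,1\}$, so I fix a prime $p\mid\tau-2$; then $P_{\overline{\varphi}}(x)\equiv(x-1)^{2}\pmod p$, i.e. $[\overline{\varphi}]\bmod p$ is unipotent, hence so is every $[\overline{\varphi}_{k}]\bmod p$ (it is $E$ plus a commuting sum of nilpotents), and therefore $P_{\overline{\varphi}_{k}}(x)\equiv(x-1)^{2^{k}}\pmod p$. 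By unique factorization in $\mathbb{F}_{p}[x]$ each monic irreducible $\mathbb{Z}$-factor of $P_{\overline{\varphi}_{k}}$ reduces mod $p$ to a power of $x-1$, so it lies in $p\mathbb{Z}$ when evaluated at $1$; by Proposition \ref{MFcri}(b) every $\overline{G}_{k}$ is residually $p$-finite, and Theorem \ref{Fdres} shows that $G$ is residually $p$-finite, in particular residually nilpotent. The case $\det[\overline{\varphi}]=-1$ with $\tau$ even is identical with $p=2$: there $P_{\overline{\varphi}}(x)=x^{2}-\tau x-1\equiv(x-1)^{2}\pmod 2$, so $[\overline{\varphi}]\bmod 2$ is unipotent, every $\overline{G}_{k}$ is residually $2$-finite, and $G$ is residually $2$-finite. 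This argument simultaneously yields the two supplementary assertions about residual $p$-finiteness.

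\emph{Necessity.} I would prove the contrapositive: if $\det[\overline{\varphi}]=1$ and $\tau\in\{1,3\}$, or $\det[\overline{\varphi}]=-1$ and $\tau$ is odd, then $G$ is not residually nilpotent. When $\det[\overline{\varphi}]=1$, $\tau\in\{1,3\}$, or $\det[\overline{\varphi}]=-1$, $\tau\in\{-1,1\}$, we have $\det([\overline{\varphi}]-E)=1-\tau+\det[\overline{\varphi}]=\pm1$, so $[\overline{\varphi}]-E\in\mathrm{GL}_{2}(\mathbb{Z})$ and Proposition \ref{FnZn} gives $\gamma_{\omega}(G)=F_{2}\neq1$. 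The remaining case, $\det[\overline{\varphi}]=-1$ with $\tau$ odd and $|\tau|\geq3$, is the crux. Here one first verifies that every $\overline{G}_{k}$ is residually nilpotent: writing $\lambda,\mu$ for the eigenvalues of $[\overline{\varphi}]$ (so $\lambda\mu=-1$, $\lambda+\mu=\tau$), the non-linear irreducible $\mathbb{Z}$-factors of $P_{\overline{\varphi}_{k}}$ are, up to the sign of the linear coefficient, of the form $x^{2}-s_{d}x+(-1)^{d}$ with $s_{d}=\lambda^{d}+\mu^{d}$, where $s_{0}=2$, $s_{1}=\tau$, $s_{d+1}=\tau s_{d}+s_{d-1}$; since $|s_{d}|\geq3$ for $d\geq1$ these quadratics are irreducible over $\mathbb{Z}$ and, together with the possible linear factor $x\mp1$, never take the value $\pm1$ at $x=1$, so Proposition \ref{MFcri}(a) gives that each $\overline{G}_{k}$ is residually nilpotent. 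By Theorem \ref{Mres} it follows that $\gamma_{\omega^{2}}(G)=1$, and the whole problem reduces to exhibiting a single nontrivial element of $\gamma_{\omega}(G)$.

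Producing that element is the step I expect to be the main obstacle, because here the abelianized and graded data are genuinely insufficient: $\overline{G}_{1}$ is residually $p$-finite only for the odd primes dividing $\tau$, while $\overline{G}_{2}$ is residually $p$-finite for no prime at all, so neither Theorem \ref{Fdres} nor Theorem \ref{Fpres} can apply. The plan is to adapt Mikhailov's treatment of the special case $\tau=3$: working in $\mathbb{Z}[G]$ — equivalently, tracking the induced $\mathbb{Z}[\langle t\rangle]$-actions on the successive quotients of the lower central series of $F_{2}$ — one constructs by induction an element lying in $\gamma_{k}(G)$ for every $k$, the incompatibility of the primes forced at the first two levels of the series being precisely what prevents this element from ever being killed. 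Once $\gamma_{\omega}(G)\neq1$ is established, it is a nontrivial normal subgroup of the free group $F_{2}$, hence free with $\gamma_{i}(\gamma_{\omega}(G))\neq1$ for all $i$, so by (\ref{iw}) the lower central series of $G$ has length $\omega^{2}$; in particular $G$ is not residually nilpotent, which completes the necessity direction.
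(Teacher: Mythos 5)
Your sufficiency argument is correct and, as far as it goes, cleaner than the paper's own route: reducing $[\overline{\varphi}]$ modulo a prime $p\mid\mathrm{tr}[\overline{\varphi}]-2$ (respectively modulo $2$ when $\det[\overline{\varphi}]=-1$ and the trace is even), noting that every $[\overline{\varphi}_k]$ is then unipotent mod $p$, and combining Proposition \ref{MFcri}(b) with Theorem \ref{Fdres} replaces in one stroke both the eigenvalue computation of Proposition \ref{F2Zr} and the index-two-subgroup/verbal-subgroup argument of Theorem \ref{F2Z2r}. The easy part of necessity ($\det[\overline{\varphi}]=1$, $\mathrm{tr}\in\{1,3\}$ and $\det[\overline{\varphi}]=-1$, $\mathrm{tr}=\pm1$, via $[\overline{\varphi}]-E\in\mathrm{GL}_2(\mathbb{Z})$ and Proposition \ref{FnZn}) is also fine, as is your verification that all $\overline{G}_k$ are residually nilpotent when $\det[\overline{\varphi}]=-1$ and the trace is odd with $|\mathrm{tr}[\overline{\varphi}]|\geq3$.

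However, the decisive step of the necessity direction is missing. In that last case you must actually prove that $G$ is \emph{not} residually nilpotent, i.e.\ exhibit a nontrivial element of $\gamma_{\omega}(G)$, and at exactly this point you only announce a plan (``adapt Mikhailov's treatment'', ``one constructs by induction an element\dots'') without constructing anything or proving it survives in every $\gamma_k(G)$. The paper's Theorem \ref{lw2} supplies precisely the argument your outline gestures at, and it is not a formality: the element is $[x,y]$. Since $\det[\overline{\varphi}]=-1$, the automorphism induced on $\gamma_2(F_2)/\gamma_3(F_2)\cong\mathbb{Z}$ is $-\mathrm{id}$, whence $[x,y]^2\in[\gamma_2(F_2),G]$; since $P_{\overline{\varphi}}(1)=-\mathrm{tr}[\overline{\varphi}]$ has odd absolute value $m=|\mathrm{tr}[\overline{\varphi}]|$, one gets $x^{m},y^{m}\in[F_2,G]\leq\gamma_2(G)$ and hence $[x,y]^{m}\in\gamma_3(G)$. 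Lemmas \ref{kruhcG} and \ref{centrG} then propagate these two congruences down the lower central series, giving $[x,y]^{2^{k}}\in\gamma_{k+2}(G)$ and $[x,y]^{m^{k}}\in\gamma_{k+2}(G)$ for all $k$, and $\gcd(2,m)=1$ forces $[x,y]\in\gamma_{\omega}(G)\neq1$. Your phrase about ``the incompatibility of the primes forced at the first two levels'' is the right intuition, but without carrying out this (or an equivalent) construction the ``only if'' half of the theorem remains unproved.
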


Also, for any group of this type we find the length of its lower central series.
\begin{theorem}\label{F2Zc}
For the lower central series of $G = F_2 {\rtimes}_\varphi \mathbb{Z}$  there exist only three possibilities:
\begin{itemize}
\item[a)]  $\gamma_{\omega}(G)=\gamma_{2}(G)$;
\item[b)]  $\gamma_{\omega}(G)=1$;
\item[c)]  $\gamma_{\omega^{2}}(G)=1$ and  the length of the lower central series is equal to $\omega^{2}$.
\end{itemize}
\end{theorem}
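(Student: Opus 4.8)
The plan is to run through the few possible shapes of $[\overline{\varphi}]\in\mathrm{GL}_2(\mathbb{Z})$, using Proposition~\ref{FnZn} to detect possibility (a), Theorem~\ref{NAF2Z} to detect possibility (b), and Theorem~\ref{Mres} to force possibility (c) in the cases left over. Write $t=\mathrm{tr}[\overline{\varphi}]$ and $d=\det[\overline{\varphi}]\in\{1,-1\}$, so $P_{\overline{\varphi}}(x)=x^{2}-tx+d$ and $P_{\overline{\varphi}}(1)=1-t+d$. The trichotomy will come from splitting on whether $P_{\overline{\varphi}}(1)=\pm1$ and, if not, on whether $G$ is residually nilpotent.

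First, if $[\overline{\varphi}]-E\in\mathrm{GL}_2(\mathbb{Z})$, i.e. $P_{\overline{\varphi}}(1)=\pm1$, then Proposition~\ref{FnZn} gives $\gamma_{\omega}(G)=F_2$, and the argument for $c)\Rightarrow a)$ in its proof also gives $\gamma_2(G)=F_2$, so $\gamma_{\omega}(G)=\gamma_2(G)$ and (a) holds. (For the converse one checks that $\gamma_2(G)=\gamma_3(G)$ forces $\gamma_2(\overline{G})=\gamma_3(\overline{G})$, and since $\gamma_k(\overline{G})=([\overline{\varphi}]-E)^{k-1}F_2^{ab}$ this happens for a $2\times2$ integer matrix $M=[\overline{\varphi}]-E$ with $\det[\overline{\varphi}]=\pm1$ only when $M=0$ or $M\in\mathrm{GL}_2(\mathbb{Z})$; the first case is excluded by the result of \cite{FR1}, since for trivial action $\gamma_n(G)=\gamma_n(F_2)$. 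Thus (a) occurs exactly when $d=1,\ t\in\{1,3\}$ or $d=-1,\ t\in\{1,-1\}$, and then the lower central series has length $2$.)

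Now suppose $P_{\overline{\varphi}}(1)\neq\pm1$. By Theorem~\ref{NAF2Z}, $G$ is residually nilpotent precisely when $d=1$ and $t\notin\{1,3\}$, or $d=-1$ and $t$ is even; in that case $\gamma_{\omega}(G)=1$, giving (b). The only remaining situation is $d=-1$ with $t$ odd, where $P_{\overline{\varphi}}(1)=-t\neq\pm1$ forces $|t|\geq3$. Here $G$ is not residually nilpotent, so by Theorem~\ref{Mres} it suffices to show that every $\overline{G}_k=(F_2^{ab})^{\otimes k}\rtimes_{\overline{\varphi}_k}\mathbb{Z}$ is residually nilpotent; then Theorem~\ref{Mres} gives $\gamma_{\omega^{2}}(G)=1$ with lower central series of length exactly $\omega^{2}$, which is (c). This is the one point that needs real work.

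For that verification I would use Proposition~\ref{MFcri}. Since $d=-1$ and $t$ is odd, $t^{2}+4$ is not a square, so the eigenvalues of $[\overline{\varphi}]$ are a conjugate pair of real quadratic units $\lambda$, $\mu=-\lambda^{-1}$. The eigenvalues of $[\overline{\varphi}_k]=[\overline{\varphi}]^{\otimes k}$ are the products $\lambda^{a}\mu^{k-a}=(-1)^{k-a}\lambda^{2a-k}$, $0\leq a\leq k$, each of the form $\pm\lambda^{m}$ with $m\equiv k\pmod 2$ and $|m|\leq k$: for $m=0$ (only when $k$ is even) this is a rational eigenvalue $\pm1$, whose factor $x\mp1$ takes the value $0$ or $2$ at $1$, and for $m\geq1$ the eigenvalue $\pm\lambda^{m}$ has minimal polynomial $x^{2}\mp c_{m}x+(-1)^{m}$, where $c_{m}=\lambda^{m}+\mu^{m}\in\mathbb{Z}$ obeys $c_{0}=2$, $c_{1}=t$, $c_{m+1}=tc_{m}+c_{m-1}$. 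These factors take the values $\pm c_{m}$ (odd $m$) or $2\pm c_{m}$ (even $m$) at $1$, so I need $c_{m}\notin\{\pm1\}$ for odd $m$ and $c_{m}\notin\{\pm1,\pm3\}$ for even $m$. Because $|t|\geq3$, the recurrence gives that $|c_{m}|$ is strictly increasing, with $|c_{1}|=|t|\geq3$ and $|c_{m}|\geq t^{2}+2\geq11$ for $m\geq2$; the only tight value $|c_{1}|=3$ is at the odd index $m=1$, where only $c_{m}\neq\pm1$ is needed. Hence no irreducible factor of $P_{\overline{\varphi}_k}$ equals $\pm1$ at $1$, so $\overline{G}_k$ is residually nilpotent by Proposition~\ref{MFcri}, and the theorem follows. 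The main obstacle is precisely this bookkeeping — describing the eigenvalues of the iterated tensor powers and controlling the Lucas-type sequence $c_{m}$; everything else reduces at once to Propositions~\ref{FnZn} and~\ref{MFcri} and Theorems~\ref{NAF2Z} and~\ref{Mres}.
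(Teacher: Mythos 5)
Your proposal is correct, and its skeleton --- split on whether $[\overline{\varphi}]-E\in\mathrm{GL}_2(\mathbb{Z})$, then on residual nilpotence, and handle the leftover case ($\det[\overline{\varphi}]=-1$, odd trace $\neq\pm1$) through Proposition \ref{MFcri} and Theorem \ref{Mres} --- is the same case assembly the paper performs, citing Proposition \ref{F2Znr} for a), Proposition \ref{F2Zr} and Theorem \ref{F2Z2r} for b), and Theorems \ref{w2}, \ref{lw2} for c). The differences are in case c). First, instead of quoting Theorem \ref{w2} you reprove its content, and your version (eigenvalues $\pm\lambda^{m}$ of the tensor powers, minimal polynomials $x^{2}\mp c_{m}x+(-1)^{m}$ with the integer recurrence $c_{m+1}=\mathrm{tr}[\overline{\varphi}]\,c_{m}+c_{m-1}$ and the monotonicity bound forced by $|\mathrm{tr}[\overline{\varphi}]|\geq3$) is a cleaner route to the integrality of the coefficients and to $c_{m}\neq\pm1$ (odd $m$), $c_{m}\notin\{\pm1,\pm3\}$ (even $m$) than the paper's inductive trace argument; the computation checks out. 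Second, to get ``length exactly $\omega^{2}$'' you invoke Theorem \ref{NAF2Z} for non-residual nilpotence and then the final clause of Theorem \ref{Mres}; this is not circular, since the paper proves Theorem \ref{NAF2Z} from Theorems \ref{F2Zr1}, \ref{F2Z2r} and \ref{lw2} without using Theorem \ref{F2Zc}, but you should be aware that the hard content you are importing this way is exactly Theorem \ref{lw2} (the explicit commutator computation showing $[x,y]\in\gamma_{\omega}(G)$), which the paper's proof cites directly. In fact, for the trichotomy alone Theorem \ref{NAF2Z} is dispensable: once all $\overline{G}_k$ are residually nilpotent, either $\gamma_{\omega}(G)=1$, giving b), or $\gamma_{\omega}(G)\neq1$ and Theorem \ref{Mres} gives c); similarly your parenthetical converse describing exactly when case a) occurs is not needed for the statement, though it is essentially sound.
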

%\vspace{5ex}

\begin{problem}
Is this theorem true if we take $F_n$, $n>2$, instead $F_2$?
\end{problem}

%\bigskip

\subsection{Groups with the  lower central series of length $2$} At first, let us describe groups for which  the length of the lower central series is equal $2$.

\begin{prop}\label{F2Znr} For the group  $G = F_2 {\rtimes}_\varphi \mathbb{Z}$ holds $\gamma_{\omega}(G) = F_{2}$ if and only if $\mathrm{det}[\overline{\varphi}]=1,$  $\mathrm{tr}[\overline{\varphi}] \in \{1, 3\},$ or $\mathrm{det}[\overline{\varphi}]=-1,$  $\mathrm{tr}[\overline{\varphi}]=\pm1$.
\end{prop}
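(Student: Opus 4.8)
The plan is to reduce the statement entirely to Proposition \ref{FnZn} and then perform an elementary $2\times 2$ determinant computation. First I would apply Proposition \ref{FnZn} with $n=2$: it says that $\gamma_{\omega}(G) = F_2$ holds if and only if $[\overline{\varphi}] - E \in \mathrm{GL}_2(\mathbb{Z})$. Since $\varphi \in \mathrm{Aut}(F_2)$, the induced matrix $[\overline{\varphi}]$ already lies in $\mathrm{GL}_2(\mathbb{Z})$, so $\mathrm{det}[\overline{\varphi}] = \pm 1$; and the condition $[\overline{\varphi}] - E \in \mathrm{GL}_2(\mathbb{Z})$ is exactly the condition that the integer $\mathrm{det}([\overline{\varphi}] - E)$ be a unit in $\mathbb{Z}$, i.e. equal to $\pm 1$.

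Next I would express $\mathrm{det}([\overline{\varphi}] - E)$ through the two invariants $\mathrm{det}[\overline{\varphi}]$ and $\mathrm{tr}[\overline{\varphi}]$. For any $2\times 2$ matrix $M$ one has the identity $\mathrm{det}(M - E) = \mathrm{det}\,M - \mathrm{tr}\,M + 1$, obtained either by expanding the $2\times 2$ determinant directly or by evaluating the characteristic polynomial of $M$ at $1$. Writing $d = \mathrm{det}[\overline{\varphi}] \in \{1, -1\}$ and $s = \mathrm{tr}[\overline{\varphi}]$, the condition $\gamma_{\omega}(G) = F_2$ thus becomes $d - s + 1 = \pm 1$.

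Finally I would split into the two cases for $d$. If $d = 1$, then $2 - s = \pm 1$, which gives $s = 1$ or $s = 3$. If $d = -1$, then $-s = \pm 1$, which gives $s = 1$ or $s = -1$, i.e. $s = \pm 1$. This is precisely the dichotomy in the statement; and conversely, each listed pair $(d,s)$ plainly satisfies $d - s + 1 = \pm 1$, so the equivalence is complete.

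I do not anticipate any genuine obstacle: the real content is Proposition \ref{FnZn}, which is already available, and what remains is the one-line determinant identity together with a short case analysis. The only point requiring a little care is the interpretation of $[\overline{\varphi}] - E \in \mathrm{GL}_2(\mathbb{Z})$ as ``$\mathrm{det}([\overline{\varphi}] - E) = \pm 1$'' rather than merely ``$\mathrm{det}([\overline{\varphi}] - E) \neq 0$'', since invertibility is taken over $\mathbb{Z}$ and not over $\mathbb{Q}$.
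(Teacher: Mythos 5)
Your proposal is correct and follows essentially the same route as the paper: both reduce the statement to Proposition \ref{FnZn}, interpret $[\overline{\varphi}]-E\in\mathrm{GL}_2(\mathbb{Z})$ as the characteristic polynomial of $[\overline{\varphi}]$ taking the value $\pm1$ at $x=1$ (your identity $\det(M-E)=\det M-\mathrm{tr}\,M+1$ is exactly this), and then split into the cases $\det[\overline{\varphi}]=\pm1$ to obtain $\mathrm{tr}[\overline{\varphi}]\in\{1,3\}$ or $\mathrm{tr}[\overline{\varphi}]=\pm1$. Nothing is missing.
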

\begin{proof}  By Proposition \ref{FnZn} the equality  $\gamma_{\omega}(G) = F_{2}$ is equivalent to the condition  $[\overline{\varphi}]-E\in\mathrm{GL}_{2}(\mathbb{Z})$. This  is equivalent to the fact that the characteristic polynomial $P_{[\overline{\varphi}]}(x)=x^{2}-\mathrm{tr}[\overline{\varphi}]x+\mathrm{det}[\overline{\varphi}]$ for $x=1$ is equal to $\pm1$. Hence, if  $\mathrm{det}[\overline{\varphi}]=1$, then $1-\mathrm{tr}[\overline{\varphi}]+1=\pm1$; if $\mathrm{det}[\overline{\varphi}]=-1$, then $1-\mathrm{tr}[\overline{\varphi}]-1=\pm1$. These give the need values. If $\mathrm{det}[\overline{\varphi}]=1$, then $\mathrm{tr}[\overline{\varphi}] \in \{1, 3\}$. If $\mathrm{det}[\overline{\varphi}]=-1$, then $\mathrm{tr}[\overline{\varphi}]=\pm1$.
\end{proof}

From this Proposition followings that the braid group $B_3$ has the short lower central series. This fact was proved by Gorin and Lin \cite{GL}.
\begin{example}[\cite{GL}]
The braid group  on 3 strands $B_3 = \langle \sigma_1,  \sigma_2  ~|~\sigma_1 \sigma_2 \sigma_1 =   \sigma_2 \sigma_1   \sigma_2 \rangle$  is the semi-direct product
$$
B_3= F_2 {\rtimes}_\varphi \mathbb{Z},
$$
where
 $F_2=\left\langle \, u=\sigma_2 \sigma_1^{-1}, v= \sigma_1\sigma_2 \sigma_1^{-2} \,\right\rangle = B_3'$
and
$\mathbb{Z}=\left\langle \, \sigma_1 \,\right\rangle$
acts on $F_2$ by the formulas
$$
\varphi(u)=u^{\sigma_1}=uv^{-1},\quad \varphi(v)=v^{\sigma_1}=u.
$$
We see that $\mathrm{det}[\overline{\varphi}]=1,$  $\mathrm{tr}[\overline{\varphi}]=1$ and  by Proposition \ref{F2Znr}$, \gamma_{2}B_{3}=\gamma_{\omega}B_{3}=F_2$.
\end{example}

\subsection{Groups with the  lower central series of  length $\omega$}
Note that if $G=F_2 {\rtimes}_\varphi \mathbb{Z}$ is residually nilpotent, then $\gamma_{\omega}(G) = 1$. On the other side, $G$ contains $F_2$ and so $\gamma_{i}(G)$ is non-trivial for all $i \geq 1$, hence, in this case $G$ has the length of the lower central series equal to $\omega$.

The eigenvalues of any matrix  in $\mathrm{GL}_{2}(\mathbb{Z})$ either both lie in $\mathbb{Z}$ and are equal to  $\pm1$, or both do not lie in  $\mathbb{Z}$.

Hence, the set of all groups of the form $F_2 {\rtimes}_\varphi \mathbb{Z}$  is the disjoint union of two subsets. For groups from the first subset Theorem \ref{Fdres1} implies

\begin{prop}\label{F2Zzvr} Let $G=F_2 {\rtimes}_\varphi \mathbb{Z}$ and  $[\overline{\varphi}]$ has only integer eigenvalues. Then $G$ is residually nilpotent and in this case
 \begin{itemize}
\item[a)] if both eigenvalues are equal to  $1$, then $G$ is residually  $p$-finite for any prime   $p$;
\item[b)] if  at least one eigenvalue is equal to  $-1$, then $G$ is residually $2$-finite.
\end{itemize}
\end{prop}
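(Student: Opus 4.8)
The plan is to obtain this statement as a direct specialization of Theorem~\ref{Fdres1}, which was already proved for arbitrary $F_n$. First I would verify that the hypotheses of that theorem hold in our setting: since $\varphi$ is an automorphism of $F_2$, it induces an automorphism $\overline{\varphi}$ of the abelianization $F_2^{ab}\cong\mathbb{Z}^2$, so $[\overline{\varphi}]\in\mathrm{GL}_2(\mathbb{Z})$; by hypothesis all eigenvalues of $[\overline{\varphi}]$ are integers. Next I would pin down which integers can occur: the product of the two eigenvalues equals $\det[\overline{\varphi}]=\pm1$, so each integer eigenvalue divides $\pm1$ and therefore lies in $\{1,-1\}$ — this is exactly the dichotomy for $\mathrm{GL}_2(\mathbb{Z})$ recorded just before the statement. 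Hence the eigenvalue pair is one of $(1,1)$, $(1,-1)$, or $(-1,-1)$.

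Now I would apply Theorem~\ref{Fdres1} with $n=2$. It yields at once that $G=F_2\rtimes_\varphi\mathbb{Z}$ is residually nilpotent. In case a), both eigenvalues equal $1$, so part a) of Theorem~\ref{Fdres1} applies and $G$ is residually $p$-finite for every prime $p$. In case b), at least one eigenvalue equals $-1$ — this covers both the pairs $(1,-1)$ and $(-1,-1)$ — so part b) of Theorem~\ref{Fdres1} applies and $G$ is residually $2$-finite.

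There is essentially no obstacle in this argument beyond correctly invoking Theorem~\ref{Fdres1}; the only point that needs a (one-line) justification of its own is that an integer eigenvalue of a matrix in $\mathrm{GL}_2(\mathbb{Z})$ must be $\pm1$, which is immediate from the determinant being a unit in $\mathbb{Z}$. All the substantive content — reducing residual $p$-finiteness of $G$ to that of the modules $\overline{G}_k$, and thence to the triangularizability of $[\overline{\varphi}]$ over $\mathbb{Z}$ — has already been carried out in Theorems~\ref{Fpres}, \ref{Fdres}, and~\ref{Fdres1}, so here it suffices to assemble these results and split into the two eigenvalue cases.
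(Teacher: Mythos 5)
Your proposal is correct and matches the paper's own treatment: the paper likewise records that integer eigenvalues of a $\mathrm{GL}_2(\mathbb{Z})$ matrix must be $\pm 1$ and then states Proposition~\ref{F2Zzvr} as an immediate specialization of Theorem~\ref{Fdres1} to $n=2$, with the same case split a)/b). Nothing further is needed.
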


For groups from the second subset (without integer eigenvalues) holds

\begin{prop}\label{F2Zr} Let  $G = F_2 {\rtimes}_\varphi \mathbb{Z}$ the determinant $det [\overline{\varphi}] = 1$ and the eigenvalues of $[\overline{\varphi}] $ are not integers. If $\mathrm{tr}[\overline{\varphi}]\not\in \{ 1, 3\}$, then $G$ is residually $p$-finite for any prime   $p$ which divides  $\mathrm{tr}[\overline{\varphi}]-2$.
\end{prop}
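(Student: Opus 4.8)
The plan is to deduce the statement from Theorem \ref{Fdres} by showing that, under the stated hypotheses, every group $\overline{G}_k = (F_2^{ab})^{\otimes k}\rtimes_{\overline{\varphi}_k}\mathbb{Z}$, $k\geq 1$, is residually $p$-finite. Since $\overline{G}_k = \mathbb{Z}^{2^k}\rtimes_{\overline{\varphi}_k}\mathbb{Z}$ with $[\overline{\varphi}_k] = [\overline{\varphi}]^{\otimes k}$, Proposition \ref{MFcri}(b) reduces this to checking that $q(1)\in p\mathbb{Z}$ for every monic irreducible factor $q(x)\in\mathbb{Z}[x]$ of the characteristic polynomial $P_{\overline{\varphi}_k}(x)$ of $[\overline{\varphi}_k]$.

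First I would record the arithmetic input. Write $\tau = \mathrm{tr}[\overline{\varphi}]$; since $\det[\overline{\varphi}]=1$, the characteristic polynomial of $[\overline{\varphi}]$ is $x^2-\tau x+1$, and the hypotheses ($\tau\notin\{1,3\}$ together with the eigenvalues of $[\overline{\varphi}]$ being non-integral, which excludes $\tau=\pm 2$) force $\tau-2\notin\{0,\pm 1\}$, so that $\tau-2$ does admit prime divisors and the statement is non-vacuous. Fix a prime $p\mid \tau-2$. Then $\tau\equiv 2\pmod p$, so the reduction modulo $p$ of the characteristic polynomial of $[\overline{\varphi}]$ is $x^2-2x+1=(x-1)^2$; equivalently, the image of $[\overline{\varphi}]$ in $\mathrm{GL}_2(\mathbb{F}_p)$ is unipotent.

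The key step is then the observation that a tensor product of unipotent matrices is unipotent (over $\overline{\mathbb{F}}_p$ all its eigenvalues are products of $1$'s), hence $[\overline{\varphi}_k]\bmod p = ([\overline{\varphi}]\bmod p)^{\otimes k}$ is unipotent, and its characteristic polynomial over $\mathbb{F}_p$ is $(x-1)^{2^k}$. Since reduction $\mathbb{Z}[x]\to\mathbb{F}_p[x]$ commutes with forming characteristic polynomials, $P_{\overline{\varphi}_k}(x)\equiv (x-1)^{2^k}\pmod p$. As $\mathbb{F}_p[x]$ is a unique factorisation domain and $P_{\overline{\varphi}_k}$ is monic, every monic irreducible factor $q(x)\in\mathbb{Z}[x]$ of $P_{\overline{\varphi}_k}(x)$ reduces modulo $p$ to a power of $x-1$; in particular $q(1)\equiv 0\pmod p$, i.e. $q(1)\in p\mathbb{Z}$. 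By Proposition \ref{MFcri}(b) each $\overline{G}_k$ is residually $p$-finite, and Theorem \ref{Fdres} then gives that $G = F_2\rtimes_\varphi\mathbb{Z}$ is residually $p$-finite.

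The only point I expect to require genuine care is the claim that tensor powers of a unipotent matrix are unipotent and that this is compatible with reduction mod $p$; this is handled by passing to $\overline{\mathbb{F}}_p$ and using multiplicativity of eigenvalues under $\otimes$, the rest being bookkeeping with characteristic polynomials together with the already-established Theorem \ref{Fdres} and Proposition \ref{MFcri}. A more computational alternative avoids reduction mod $p$: the eigenvalues of $[\overline{\varphi}_k]$ are the numbers $\lambda^{k-2j}$, $0\leq j\leq k$, where $\lambda,\lambda^{-1}$ are the eigenvalues of $[\overline{\varphi}]$, so $P_{\overline{\varphi}_k}$ is a product of $x-1$ (when $k$ is even) and of quadratics $x^2-s_m x+1$ with $s_m=\lambda^m+\lambda^{-m}\in\mathbb{Z}$ satisfying $s_0=2$, $s_1=\tau$, $s_{m+1}=\tau s_m-s_{m-1}$; an easy induction yields $s_m\equiv 2\pmod{\tau-2}$, hence each such quadratic factor $q$ has $q(1)=2-s_m\equiv 0\pmod{\tau-2}$, and the degenerate subcases $q=(x\pm 1)^2$ are dealt with directly. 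I would present the mod-$p$ argument as the main proof.
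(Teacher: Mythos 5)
Your proof is correct, and its overall architecture coincides with the paper's: both reduce the claim to showing that every $\overline{G}_k=(F_2^{ab})^{\otimes k}\rtimes_{\overline{\varphi}_k}\mathbb{Z}$ is residually $p$-finite, which by Proposition \ref{MFcri}(b) amounts to checking $q(1)\in p\mathbb{Z}$ for each irreducible factor $q$ of $P_{\overline{\varphi}_k}$, and then both invoke Theorem \ref{Fdres}. Where you genuinely diverge is in how this divisibility is verified. The paper works over $\mathbb{Z}$ (and $\mathbb{C}$): it diagonalizes $[\overline{\varphi}]$ as $\mathrm{diag}(r,r^{-1})$, proves by induction, via the identity $(r^{n-1}+r^{-(n-1)})(r+r^{-1})=r^{n}+r^{-n}+r^{n-2}+r^{-(n-2)}$, that $r^{n}+r^{-n}\equiv 2\pmod{p}$, and then asserts that the irreducible factors of $P_{\overline{\varphi}_k}$ are the quadratics $x^2-(2+pl)x+1$ or $x-1$ — essentially your ``computational alternative'' with the recurrence $s_{m+1}=\tau s_m-s_{m-1}$. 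Your main argument instead reduces mod $p$: since $p\mid\tau-2$, the image of $[\overline{\varphi}]$ in $\mathrm{GL}_2(\mathbb{F}_p)$ is unipotent, hence so is its $k$-th tensor power, so $P_{\overline{\varphi}_k}\equiv(x-1)^{2^k}\pmod p$ and every monic irreducible factor over $\mathbb{Z}$ reduces to a power of $x-1$, giving $q(1)\equiv 0\pmod p$ without ever identifying the factors. This buys uniformity: the paper's explicit description of the factors is slightly delicate in the degenerate cases $\mathrm{tr}[\overline{\varphi}]\in\{-1,0\}$, where $r$ is a root of unity and some of the claimed quadratics degenerate to $(x\pm 1)^2$; your mod-$p$ argument handles these cases with no extra work, while the paper's (and your alternative) route has the advantage of exhibiting the factors and their values at $1$ explicitly.
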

\begin{proof} The characteristic polynomial of $[\overline{\varphi}]$ is the irreducible polynomial
$f(x)=x^{2}-\mathrm{tr}[\overline{\varphi}]x+1 \in\mathbb{Z}[x].$
By the assumption $\mathrm{tr}[\overline{\varphi}]-2 = -f(1)=pl$ for some integer $l$ and so
$$
f(x)=x^{2}-(2+pl)x+1 \in\mathbb{Z}[x].
$$
The matrix $[\overline{\varphi}]$  has two different eigenvalues $r, r^{-1} \in \mathbb{C}$ and is conjugated  to the diagonal matrix $\mathrm{diag}(r, r^{-1})$, where $r+r^{-1} = 2+pl \in 2+p\mathbb{Z}$. Since $2+pl=\mathrm{tr}[\overline{\varphi}] \not\in \{1, 3\},$  then $p \not\in \{\pm1\}$.

From the equality
$$
(r^{n-1}+r^{-(n-1)})(r+r^{-1})=r^{n}+r^{-n}+r^{n-2}+r^{-(n-2)}\in4+p\mathbb{Z}
$$
 follows that all pairs  $r^{n},r^{-n}, n\in\mathbb{Z}\backslash\{0\},$ are roots of irreducible over $\mathbb{Z}$  polynomials of the form
 $$
 f(x)=x^{2}-(2+pl)x+1, ~~l\in\mathbb{Z}\backslash\{0\}.
 $$
Using induction by  $k\geq1$ it is not difficult to prove that   $[\overline{\varphi}_k]=[\overline{\varphi}]^{\otimes k}$ is conjugated to the diagonal matrix  $(\mathrm{diag}(r, r^{-1}))^{\otimes k}$ in which the diagonal elements  $r^{n},r^{-n}, n\in\mathbb{Z},$ arrive in pairs. Hence, all irreducible factors of the characteristic polynomial of this matrix have the form
$$
f(x)=x^{2}-(2+pl)x+1,~~l\in\mathbb{Z}\backslash\{0\}~\mbox{or}~f(x)=x-1.
$$ So $f(1)\in p\mathbb{Z}$. By Proposition \ref{MFcri} all groups
 $(F^{ab})^{\otimes k}{\rtimes}_{\overline{\varphi}_k} \mathbb{Z}$ are residually $p$-finite for any prime  $p$ that is a divisor of $\mathrm{tr}[\overline{\varphi}]-2$.  Theorem \ref{Fdres} implies that  $G$ is residually $p$-finite for the same  prime  $p$.
\end{proof}

\begin{cor}
If the matrix $[\overline{\varphi}]$ lies in the congruence-subgroup  $\Gamma_2(m)\leq \mathrm{GL}_2(\mathbb{Z})$, then
$F_2 {\rtimes}_\varphi \mathbb{Z}$  is residually $p$-finite for any prime  $p$ which divides  $m$.
\end{cor}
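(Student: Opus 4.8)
The plan is to read the numerical hypotheses of Theorem~\ref{NAF2Z} directly off membership in the congruence subgroup. Recall that $\Gamma_2(m)$ consists of those $A\in\mathrm{GL}_2(\mathbb{Z})$ with $A\equiv E\pmod m$. If $m=1$ there is no prime dividing $m$ and the statement is vacuous, so I would assume $m\geq 2$ and fix a prime $p\mid m$. Since $[\overline{\varphi}]\equiv E\pmod m$ forces $[\overline{\varphi}]\equiv E\pmod p$, comparing diagonal entries and the off-diagonal product modulo $p$ gives
$$
\mathrm{tr}[\overline{\varphi}]\equiv 2\pmod p,\qquad \det[\overline{\varphi}]\equiv 1\pmod p .
$$
As $\det[\overline{\varphi}]=\pm1$, the second congruence leaves only two possibilities: either $\det[\overline{\varphi}]=1$, or else $\det[\overline{\varphi}]=-1$ and $p=2$.

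First I would dispose of the case $\det[\overline{\varphi}]=1$. The trace congruence says $p\mid \mathrm{tr}[\overline{\varphi}]-2$, and in particular $\mathrm{tr}[\overline{\varphi}]\notin\{1,3\}$, for otherwise $p$ would divide $\mathrm{tr}[\overline{\varphi}]-2\in\{-1,1\}$, which is impossible since $p\geq 2$. Thus the hypotheses $\det[\overline{\varphi}]=1$, $\mathrm{tr}[\overline{\varphi}]\notin\{1,3\}$ of Theorem~\ref{NAF2Z} are met, so $G=F_2\rtimes_\varphi\mathbb{Z}$ is residually $p$-finite for every prime divisor of $\mathrm{tr}[\overline{\varphi}]-2$; since $p$ is such a divisor, $G$ is residually $p$-finite.

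In the remaining case $\det[\overline{\varphi}]=-1$ we have $p=2$ (and the congruence $\det[\overline{\varphi}]\equiv 1\pmod m$ in fact forces $m\mid 2$, hence $m=2$). Here $\mathrm{tr}[\overline{\varphi}]\equiv 2\equiv 0\pmod 2$, which is precisely the hypothesis of the $\det[\overline{\varphi}]=-1$ branch of Theorem~\ref{NAF2Z}; that branch gives that $G$ is residually $2$-finite, as required. This exhausts all cases.

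The argument is essentially bookkeeping once Theorem~\ref{NAF2Z} is available, so I do not expect a genuine obstacle; the only points needing attention are the split on $\det[\overline{\varphi}]=\pm1$ and the observation that the excluded traces $1$ and $3$ are automatically avoided whenever $p\geq 2$. If one prefers to avoid invoking Theorem~\ref{NAF2Z} (whose proof itself leans on the results of this section), the same conclusion follows by combining Propositions~\ref{F2Zzvr} and~\ref{F2Zr}, after first separating the sub-case in which $[\overline{\varphi}]$ has integer (hence $\pm1$) eigenvalues from the one in which it does not; but routing everything through Theorem~\ref{NAF2Z} is the cleanest.
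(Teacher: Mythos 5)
Your argument is correct, and the numerical bookkeeping (reduction of $[\overline{\varphi}]$ mod $p$ giving $\mathrm{tr}[\overline{\varphi}]\equiv 2$ and $\det[\overline{\varphi}]\equiv 1 \pmod p$, hence $p\mid \mathrm{tr}[\overline{\varphi}]-2$ and the exclusion of traces $1,3$) is exactly the computation the paper intends, since the corollary is stated without proof immediately after Proposition~\ref{F2Zr}. The difference is one of routing and completeness: the paper's placement suggests the intended derivation is a direct appeal to Proposition~\ref{F2Zr}, which formally assumes $\det[\overline{\varphi}]=1$ and non-integer eigenvalues, so a literal reading leaves out the integer-eigenvalue case and the case $\det[\overline{\varphi}]=-1$ (possible in $\Gamma_2(2)$, since $-1\equiv 1 \pmod 2$); you instead route everything through Theorem~\ref{NAF2Z}, whose proof does not depend on this corollary, and so your case split ($\det=1$ arbitrary $p$; $\det=-1$ forcing $p=2$, $m=2$, even trace) genuinely covers all matrices of $\Gamma_2(m)\leq\mathrm{GL}_2(\mathbb{Z})$, which is a more careful treatment than the paper's implicit one. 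One small caveat: your closing remark that the theorem can be avoided by ``combining Propositions~\ref{F2Zzvr} and~\ref{F2Zr}'' is not quite enough as stated, since for $\det[\overline{\varphi}]=-1$ with non-integer eigenvalues and even trace neither proposition applies and you would also need Theorem~\ref{F2Z2r}; but this is only an aside, and your main proof stands.
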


From the previous results follows criterion of residually nilpotence of  $G = F_2 {\rtimes}_\varphi \mathbb{Z}$ with $\det[\overline{\varphi}] = 1$.
\begin{theorem}\label{F2Zr1} Suppose that for the group $G=F_2 {\rtimes}_\varphi \mathbb{Z}$ the determinant $\det[\overline{\varphi}] = 1$. Then  $G$  is residually nilpotent if and only if  $\mathrm{tr}[\overline{\varphi}]\not\in \{1, 3 \}$.
\end{theorem}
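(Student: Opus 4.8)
The plan is to deduce this criterion by a straightforward bookkeeping argument from the structural results already established, organized according to the nature of the eigenvalues of $[\overline{\varphi}]$.

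For the ``only if'' direction I would argue by contraposition. Suppose $\det[\overline{\varphi}] = 1$ and $\mathrm{tr}[\overline{\varphi}] \in \{1, 3\}$. Then Proposition \ref{F2Znr} applies and gives $\gamma_{\omega}(G) = F_2$, which is non-trivial; hence $G$ is not residually nilpotent. Nothing new is needed here.

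For the ``if'' direction, assume $\det[\overline{\varphi}] = 1$ and $\mathrm{tr}[\overline{\varphi}] \notin \{1,3\}$. Since $[\overline{\varphi}] \in \mathrm{GL}_2(\mathbb{Z})$, its two eigenvalues are either both integers or both non-integers (as recorded before Proposition \ref{F2Zzvr}). In the integer case, the eigenvalues, having product $\det[\overline{\varphi}] = 1$, are both $1$ or both $-1$, so $\mathrm{tr}[\overline{\varphi}] \in \{2, -2\}$; Proposition \ref{F2Zzvr} then shows $G$ is residually $p$-finite for every prime $p$ (trace $2$) or residually $2$-finite (trace $-2$), and in either situation $G$ is residually nilpotent. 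In the non-integer case, the characteristic polynomial $x^2 - \mathrm{tr}[\overline{\varphi}]\,x + 1$ is irreducible over $\mathbb{Z}$; moreover $\mathrm{tr}[\overline{\varphi}] \neq 2$ (otherwise $1$ would be an integral eigenvalue) and $\mathrm{tr}[\overline{\varphi}] \notin \{1,3\}$ by hypothesis, so $\mathrm{tr}[\overline{\varphi}] - 2 \notin \{-1, 0, 1\}$ and therefore has a prime divisor $p$. Proposition \ref{F2Zr} then yields that $G$ is residually $p$-finite, hence residually nilpotent.

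Since the argument is essentially an assembly of Propositions \ref{F2Znr}, \ref{F2Zzvr} and \ref{F2Zr}, there is no substantial obstacle. The one point deserving a moment's care is to verify that the excluded trace values $\{1,3\}$ (for $\det[\overline{\varphi}] = 1$) are exactly those for which $\mathrm{tr}[\overline{\varphi}] - 2 = \pm 1$ has no prime divisor, equivalently exactly those for which Proposition \ref{F2Znr} forces $\gamma_{\omega}(G) = F_2$; this is what makes the two directions match up perfectly and leaves no gap between ``residually nilpotent'' and ``$\gamma_\omega(G) = 1$'' in the remaining cases.
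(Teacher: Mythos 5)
Your proposal is correct and follows essentially the same route as the paper's own proof: the case $\mathrm{tr}[\overline{\varphi}]\in\{1,3\}$ is settled by Proposition \ref{F2Znr}, and the case $\mathrm{tr}[\overline{\varphi}]\notin\{1,3\}$ is split according to whether the eigenvalues of $[\overline{\varphi}]$ are integers, invoking Proposition \ref{F2Zzvr} and Proposition \ref{F2Zr} respectively. Your extra check that $\mathrm{tr}[\overline{\varphi}]-2\notin\{-1,0,1\}$ in the non-integer case (so that a prime divisor exists) is a welcome detail the paper leaves implicit, but it does not change the argument.
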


\begin{proof} As we noted above, both eigenvalues of  $[\overline{\varphi}] \in\mathrm{GL}_{2}(\mathbb{Z})$ either  lie in  $\mathbb{Z}$ and are equal to $\pm1$, or do not lie in  $\mathbb{Z}$.

Let $\mathrm{tr}[\overline{\varphi}]\not\in \{1, 3\}$. If the eigenvalues of  $[\overline{\varphi}]$   are integers, then by Lemma  \ref{F2Zzvr} the group $G$ is residually nilpotent.
If the eigenvalues of  $[\overline{\varphi}]$   are not  integers, then by Lemma \ref{F2Zr} the group  $G$ is residually nilpotent.

If $\mathrm{tr}[\overline{\varphi}] \in \{1, 3\}$, then the need assertion follows from Lemma  \ref{F2Znr}.
\end{proof}

\medskip

For the studying of the case $\det[\overline{\varphi}] = -1$, recall the definition.
A group is said to be {\it virtually residually nilpotent} if it contains a subgroup of finite index which is residually nilpotent. We know that not any group of the form $F_{n} {\rtimes}_\varphi \mathbb{Z}$ is residually nilpotent. On the other side, Azarov \cite{Azar}  proved that any semi-direct product of a  finitely generated  residually $p$-finite group by a residually $p$-finite group is virtually residually $p$-finite and hence, contains residually nilpotent subgroups of finite index. From this result follows

\begin{prop} \label{vrn}
Any group   $F_{n} {\rtimes}_\varphi \mathbb{Z}$ is virtually residually nilpotent.
\end{prop}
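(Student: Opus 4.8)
The plan is to obtain the statement as an immediate application of Azarov's theorem \cite{Azar}, after checking that its hypotheses are met. First I would recall that the free group $F_n$ is finitely generated and is residually $p$-finite for every prime $p$: this is the classical fact that free groups are residually (finite $p$-group), which also underlies the residual $p$-nilpotence of $F_n$ used repeatedly in Sections \ref{s3}--\ref{pFnZ}. Next, the infinite cyclic group $\mathbb{Z}=\langle t\rangle$ is residually $p$-finite for every prime $p$, since the quotients $\mathbb{Z}/p^{k}\mathbb{Z}$, $k\geq 1$, separate the non-trivial elements of $\mathbb{Z}$.

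With these two observations in place, $G = F_{n}\rtimes_{\varphi}\mathbb{Z}$ is a semi-direct product of the finitely generated residually $p$-finite group $F_n$ by the residually $p$-finite group $\mathbb{Z}$. Azarov's theorem then applies and yields a subgroup $H\leq G$ of finite index that is residually $p$-finite. Since every finite $p$-group is nilpotent, a residually $p$-finite group is residually nilpotent; hence $H$ is residually nilpotent. As $[G:H]<\infty$, the group $G$ contains a residually nilpotent subgroup of finite index, i.e.\ $G$ is virtually residually nilpotent.

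I do not expect a genuine obstacle here: the content of the proposition is entirely carried by the cited theorem of Azarov, and the only points to make explicit are the (standard) residual $p$-finiteness of $F_n$ and of $\mathbb{Z}$ and the elementary implication ``residually $p$-finite $\Rightarrow$ residually nilpotent''. If a self-contained argument were desired instead, one could try to build it from the lower $p$-series machinery of Theorem \ref{Fpres}, passing to a finite-index subgroup on which the induced action is unipotent modulo $p$; but that is unnecessary given the available citation.
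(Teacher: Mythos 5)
Your proposal is correct and matches the paper's argument: the paper also deduces Proposition \ref{vrn} directly from Azarov's theorem on semi-direct products of finitely generated residually $p$-finite groups by residually $p$-finite groups, together with the observation that residually $p$-finite implies residually nilpotent. Your explicit verification of the hypotheses (residual $p$-finiteness of $F_n$ and of $\mathbb{Z}$) fills in exactly the routine details the paper leaves implicit.
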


Further  we find a subgroup of finite index in $F_{2} {\rtimes}_\varphi \mathbb{Z}$ which is residually nilpotent.

\begin{lemma}\label{F2Zvr} The group $G= F_2 {\rtimes}_\varphi \mathbb{Z}$ contains a residually nilpotent subgroup of index  $2$ that is isomorphic to  $F_{2}{\rtimes}_{\varphi^{2}}\mathbb{Z}$ in all cases except $\mathrm{det}[\overline{\varphi}]\!=\!-1\!,$  $\mathrm{tr}[\overline{\varphi}]=\pm1$. In the cases $\mathrm{det}[\overline{\varphi}]=-1,$ $\mathrm{tr}[\overline{\varphi}]=\pm1$ it contains a residually nilpotent subgroup of index  $4$ that is isomorphic to     $F_2 {\rtimes}_{\varphi^{4}} \mathbb{Z}$.
\end{lemma}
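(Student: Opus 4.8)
The plan is to pass to the index-two subgroup $H = F_2 \rtimes_{\varphi^2} \mathbb{Z}$ (generated by $F_2$ together with $t^2$) and use the trace/determinant classification already established. Write $M = [\overline{\varphi}] \in \mathrm{GL}_2(\mathbb{Z})$, so that $H$ has abelianized action matrix $M^2$, with $\det[\overline{\varphi^2}] = \det(M^2) = (\det M)^2 = 1$ and $\mathrm{tr}(M^2) = (\mathrm{tr}\,M)^2 - 2\det M$. By Theorem \ref{F2Zr1}, the group $H$ is residually nilpotent precisely when $\mathrm{tr}(M^2) \notin \{1, 3\}$, so the first step is to check this trace condition case by case and identify exactly when it fails.

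First I would dispose of the case $\det M = 1$. Then $\mathrm{tr}(M^2) = (\mathrm{tr}\,M)^2 - 2$, which equals $1$ iff $(\mathrm{tr}\,M)^2 = 3$ (impossible over $\mathbb{Z}$) and equals $3$ iff $(\mathrm{tr}\,M)^2 = 5$ (also impossible). Hence $H$ is always residually nilpotent when $\det M = 1$, which covers both the residually nilpotent and the non-residually nilpotent sub-cases of $G$ itself; this is the point where we genuinely gain something by passing to index two. Next, for $\det M = -1$ we have $\mathrm{tr}(M^2) = (\mathrm{tr}\,M)^2 + 2$; this equals $1$ never, and equals $3$ iff $(\mathrm{tr}\,M)^2 = 1$, i.e. $\mathrm{tr}\,M = \pm 1$. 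So for $\det M = -1$ the subgroup $F_2 \rtimes_{\varphi^2}\mathbb{Z}$ is residually nilpotent exactly when $\mathrm{tr}\,M \notin \{\pm 1\}$ (consistent with the criterion in Theorem \ref{NAF2Z}), and the only remaining bad cases are $\det M = -1$, $\mathrm{tr}\,M = \pm 1$.

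For those two exceptional cases I would pass one step further to the index-four subgroup $K = F_2 \rtimes_{\varphi^4}\mathbb{Z}$, whose abelianized action is $M^4$ with $\det(M^4) = 1$ and $\mathrm{tr}(M^4) = (\mathrm{tr}(M^2))^2 - 2 = (\mathrm{tr}\,M)^2\big((\mathrm{tr}\,M)^2 + 4\big) + 2$. Plugging in $\mathrm{tr}\,M = \pm 1$ gives $\mathrm{tr}(M^4) = 7 \notin \{1,3\}$, and since $\det(M^4) = 1$, Theorem \ref{F2Zr1} again yields that $K$ is residually nilpotent. It remains only to confirm that $F_2 \rtimes_{\varphi^2}\mathbb{Z}$ (resp. $F_2 \rtimes_{\varphi^4}\mathbb{Z}$) really does sit inside $G$ as a subgroup of index $2$ (resp. $4$): this is the standard fact that $\langle F_2, t^k\rangle = F_2 \rtimes_{\varphi^k}\mathbb{Z}$ has index $k$ in $F_2 \rtimes_\varphi \mathbb{Z}$, which follows immediately from writing $G$ as a disjoint union of cosets $F_2\,t^j$, $0 \le j < k$. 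The only mild subtlety — the part worth being careful about — is making sure one invokes Theorem \ref{F2Zr1} legitimately, i.e. that the determinant of the relevant power is indeed $1$; this is why the argument naturally stops at index $2$ in general and at index $4$ only in the $\det M = -1$, $\mathrm{tr}\,M = \pm 1$ cases, where squaring is needed to force the determinant positive before the trace criterion can be applied.
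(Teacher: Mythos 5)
Your proposal is correct and follows essentially the same route as the paper: pass to the subgroup $\langle F_2, t^2\rangle \cong F_2\rtimes_{\varphi^2}\mathbb{Z}$ (resp.\ $\langle F_2, t^4\rangle \cong F_2\rtimes_{\varphi^4}\mathbb{Z}$ in the exceptional cases), use Cayley--Hamilton to compute $\det$ and $\mathrm{tr}$ of the powers of $[\overline{\varphi}]$, and apply Theorem \ref{F2Zr1}. Your case check that $(\mathrm{tr}[\overline{\varphi}])^2-2\notin\{1,3\}$ when $\det[\overline{\varphi}]=1$ and the value $\mathrm{tr}([\overline{\varphi}]^4)=7$ in the exceptional cases match the paper's computation exactly.
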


\begin{proof}It is well known that any matrix  $M\in\mathrm{GL}_{2}(\mathbb{Z})$  satisfies to the charac\-teris\-tic equation
$$
x^{2}-\mathrm{tr}M \cdot x+\mathrm{det}M = 0.
$$
 So $M^{2}=(\mathrm{tr}M)M\pm E$ and  $\mathrm{tr}(M^{2})=(\mathrm{tr}M)^{2}-2$ for $\mathrm{det}M=1$, and  $\mathrm{tr}(M^{2})=(\mathrm{tr}M)^{2}+2$ for $\mathrm{det}M=-1$.

In our group
$$
G = F_2 {\rtimes}_\varphi \mathbb{Z}=\langle x, y, t ~| ~t^{-1}xt=\varphi(x), ~~t^{-1}yt=\varphi(y)\rangle,
$$
the subgroup that is generated by elements $x, y, t^{2}$ has index  $2$ and is isomorphic to  $F_2 {\rtimes}_{\varphi^{2}}\mathbb{Z}$.
If $\mathrm{det}[\overline{\varphi}]\neq-1$ or $\mathrm{tr}[\overline{\varphi}]\neq\pm1,$ then $\mathrm{tr}([\overline{\varphi}]^{2})=(\mathrm{tr}[\overline{\varphi}])^{2}\pm2\not\in \{1, 3\}$. So  by Theorem $\ref{F2Zr1}$ the group $F_2 {\rtimes}_{\varphi^{2}}\mathbb{Z}$ is residually nilpotent.

In the case  $\mathrm{det}[\overline{\varphi}]=-1,$ $\mathrm{tr}[\overline{\varphi}]=\pm1$, the subgroup that is generated by  $x, y, t^{4}$ has index  $4$, and is isomorphic to $F_2 {\rtimes}_{\varphi^{4}}\mathbb{Z}$. Since $[\overline{\varphi^{4}}]=[\overline{\varphi}]^{4}$ and $\mathrm{tr}([\overline{\varphi}]^{2})=(\mathrm{tr}[\overline{\varphi}])^{2}+2=3$, in this case  $\mathrm{det}([\overline{\varphi}]^{4})=1$ and $\mathrm{tr}([\overline{\varphi}]^{4})=3^{2}-2=7$. By Theorem  $\ref{F2Zr1}$  $F_2 {\rtimes}_{\varphi^{4}}\mathbb{Z}$ is residually nilpotent.
\end{proof}

\begin{cor}\label{GM} The Mikhailov group  $G_{M}=F_{2}\rtimes_{\varphi}\mathbb{Z}$,
where  $\varphi(x)=y,$ $\varphi(y)=xy^{3}$, % has the matrix  $[\overline{\varphi}]$ such that   $\mathrm{det}[\overline{\varphi}]=-1,$ and $\mathrm{tr}[\overline{\varphi}]=3$. Hence,    $G_{M}$
contains residually nilpotent group of index  $2$, which is generated by  $x, y, t^{2}$. This subgroup  is isomorphic to  $F_{2}\rtimes_{\varphi^{2}}\mathbb{Z}$, %where $[\overline{\varphi^{2}} ]=[\overline{\varphi}]^{2}$,
and   is residually $3$-finite.
\end{cor}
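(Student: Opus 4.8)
The plan is to read off the abelianized automorphism of the Mikhailov group and then apply the results of this section verbatim. First I would pass to $F_2^{ab}$: from $\varphi(x)=y$ and $\varphi(y)=xy^{3}$ one gets, in additive notation, $\overline{x}\mapsto\overline{y}$ and $\overline{y}\mapsto\overline{x}+3\overline{y}$, hence
$$
[\overline{\varphi}]=\begin{pmatrix} 0 & 1 \\ 1 & 3 \end{pmatrix},\qquad \det[\overline{\varphi}]=-1,\qquad \mathrm{tr}[\overline{\varphi}]=3 .
$$
Since $\det[\overline{\varphi}]=-1$ and $\mathrm{tr}[\overline{\varphi}]=3\neq\pm1$, the group $G_M$ avoids the exceptional case of Lemma~\ref{F2Zvr}, so the subgroup $\langle x,y,t^{2}\rangle$ has index $2$ in $G_M$, is isomorphic to $F_2\rtimes_{\varphi^{2}}\mathbb{Z}$, and is residually nilpotent. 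This settles the first two assertions of the corollary.

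Next I would upgrade residual nilpotence to residual $3$-finiteness for $H:=F_2\rtimes_{\varphi^{2}}\mathbb{Z}$. By functoriality of abelianization, $[\overline{\varphi^{2}}]=[\overline{\varphi}]^{2}$, and the Cayley--Hamilton computation used in the proof of Lemma~\ref{F2Zvr} gives, since $\det[\overline{\varphi}]=-1$,
$$
\det[\overline{\varphi^{2}}]=1,\qquad \mathrm{tr}[\overline{\varphi^{2}}]=(\mathrm{tr}[\overline{\varphi}])^{2}+2=11 .
$$
The characteristic polynomial $x^{2}-3x-1$ of $[\overline{\varphi}]$ is irreducible over $\mathbb{Z}$ (its discriminant $13$ is not a perfect square), so neither $[\overline{\varphi}]$ nor $[\overline{\varphi}]^{2}$ has integer eigenvalues. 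Thus $H$ satisfies the hypotheses of Proposition~\ref{F2Zr} with $\det[\overline{\varphi^{2}}]=1$, $\mathrm{tr}[\overline{\varphi^{2}}]=11\notin\{1,3\}$ and $\mathrm{tr}[\overline{\varphi^{2}}]-2=9=3^{2}$; hence $H$ is residually $p$-finite for every prime divisor $p$ of $9$, i.e. residually $3$-finite. (One may equivalently invoke Theorem~\ref{NAF2Z} directly for $H$.) Combining this with the previous paragraph yields exactly the three claims of the corollary.

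There is essentially no obstacle here: the statement is a plain specialization of this section's general theorems. The only care required is bookkeeping --- correctly abelianizing $\varphi$ (minding the matrix convention), tracking how squaring acts on determinant and trace (here $\det=-1$ becomes $\det=1$ and $\mathrm{tr}$ becomes $\mathrm{tr}^{2}+2$), and confirming that the eigenvalues are non-integral so that Proposition~\ref{F2Zr}, rather than Proposition~\ref{F2Zzvr}, is the applicable statement.
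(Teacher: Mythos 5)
Your computation ($\det[\overline{\varphi}]=-1$, $\mathrm{tr}[\overline{\varphi}]=3$, hence the non-exceptional case of Lemma \ref{F2Zvr}, followed by Proposition \ref{F2Zr} applied to $[\overline{\varphi}]^{2}$ with $\det=1$, $\mathrm{tr}=11$ and $\mathrm{tr}-2=9=3^{2}$) is correct and is exactly how the paper intends the corollary to follow, since it is stated without separate proof as an immediate consequence of Lemma \ref{F2Zvr} and Proposition \ref{F2Zr}. Nothing further is needed.
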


Now we are ready to prove conditions under which groups with $\mathrm{det}[\overline{\varphi}]=-1$ are residually nilpotent.

\begin{theorem}\label{F2Z2r} If $\mathrm{det}
[\overline{\varphi}]=-1$ and  $\mathrm{tr}[\overline{\varphi}]\equiv0(\mathrm{mod}2)$, then $G = F_2 {\rtimes}_\varphi \mathbb{Z}$ is residually $2$-finite.
\end{theorem}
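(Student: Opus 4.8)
The strategy is to use the results already established for the determinant $1$ case together with the index-$2$ reduction of Lemma \ref{F2Zvr}. Assume $\mathrm{det}[\overline{\varphi}] = -1$ and $\mathrm{tr}[\overline{\varphi}] \equiv 0 \pmod 2$. Since the eigenvalues of a matrix in $\mathrm{GL}_2(\mathbb{Z})$ with determinant $-1$ are real of the form $r, -r^{-1}$ (so their sum is the trace and their product is $-1$), they are integers only when $\{r,-r^{-1}\}=\{1,-1\}$, i.e.\ when $\mathrm{tr}[\overline{\varphi}]=0$. So one should split into two cases according to whether $\mathrm{tr}[\overline{\varphi}]=0$ or $\mathrm{tr}[\overline{\varphi}]$ is a nonzero even number.

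First I would dispose of the case $\mathrm{tr}[\overline{\varphi}]=0$. Then $[\overline{\varphi}]$ has eigenvalues $1$ and $-1$, both integers, so Proposition \ref{F2Zzvr}(b) applies directly and gives that $G$ is residually $2$-finite. (Equivalently, part b) of Theorem \ref{Fdres1} applies since there is an eigenvalue equal to $-1$.)

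Now suppose $\mathrm{tr}[\overline{\varphi}] = 2k$ with $k \neq 0$, and the eigenvalues are not integers. Pass to the index-$2$ subgroup $H = \langle x, y, t^2\rangle \cong F_2 \rtimes_{\varphi^2}\mathbb{Z}$, whose matrix is $[\overline{\varphi}]^2$. Using the characteristic equation $[\overline{\varphi}]^2 = \mathrm{tr}[\overline{\varphi}]\cdot[\overline{\varphi}] + E$ (here $\mathrm{det}=-1$), we get $\mathrm{det}([\overline{\varphi}]^2) = 1$ and $\mathrm{tr}([\overline{\varphi}]^2) = (\mathrm{tr}[\overline{\varphi}])^2 + 2 = 4k^2 + 2$. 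Since $k \neq 0$ this is $\geq 6$, in particular not in $\{1,3\}$, so by Theorem \ref{F2Zr1} the group $H$ is residually nilpotent; more precisely, by Proposition \ref{F2Zr} it is residually $p$-finite for every prime $p$ dividing $\mathrm{tr}([\overline{\varphi}]^2) - 2 = 4k^2$, and in particular $H$ is residually $2$-finite. The remaining task is to upgrade residual $2$-finiteness of the index-$2$ subgroup $H$ to residual $2$-finiteness of $G$ itself. This does not follow from an arbitrary finite-index argument (residual $2$-finiteness is not in general inherited by overgroups of finite index), so the honest way is to run the proof of Theorem \ref{Fpres} directly for $G$: the point is that $\overline{G}_k = (F_n^{ab})^{\otimes k}\rtimes_{\overline{\varphi}_k}\mathbb{Z}$ is residually $2$-finite for all $k\geq 1$. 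This is where the real work is, and it parallels the argument of Proposition \ref{F2Zr}: one shows that every irreducible factor over $\mathbb{Z}$ of the characteristic polynomial of $[\overline{\varphi}_k] = [\overline{\varphi}]^{\otimes k}$ evaluated at $1$ lies in $2\mathbb{Z}$. The eigenvalues of $[\overline{\varphi}]^{\otimes k}$ are products of $k$ eigenvalues of $[\overline{\varphi}]$, i.e.\ elements $\pm r^{\pm 1}\cdots$; grouping an eigenvalue $\mu$ with $\mu^{-1}$ (note $(-r^{-1})(-r) = 1$ so inverses appear among the tensor-power eigenvalues in pairs of the same sign up to an even number of sign flips), each such pair satisfies a polynomial $x^2 - (\mu+\mu^{-1})x + 1$ or a linear polynomial $x\pm 1$; and the key congruence $\mu + \mu^{-1} \equiv \pm 2 \pmod 2$ — which holds because $\mathrm{tr}[\overline{\varphi}]$ is even and the relevant traces are obtained from it by the recursion $s_{n+1} = (\mathrm{tr}[\overline{\varphi}])s_n - s_{n-1}$ staying even — forces $f(1) \in 2\mathbb{Z}$ for every irreducible factor. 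Then Proposition \ref{MFcri}(b) gives that each $\overline{G}_k$ is residually $2$-finite, Theorem \ref{Fdres} (via Corollary \ref{GGcri} and Theorem \ref{Fpres}) gives that $G$ is residually $2$-finite, and we are done.

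**Main obstacle.** The delicate point is the bookkeeping on eigenvalues of the tensor powers $[\overline{\varphi}]^{\otimes k}$ in the determinant $-1$ case: unlike the determinant $1$ situation of Proposition \ref{F2Zr}, the eigenvalues $r$ and $-r^{-1}$ are not mutually inverse, so one must track the signs carefully to see that the eigenvalues of $[\overline{\varphi}]^{\otimes k}$ still come in reciprocal pairs $\{\lambda, \lambda^{-1}\}$ (with $\lambda\lambda^{-1}=1$) so that the irreducible factors of the characteristic polynomial have the controlled form $x^2-(\lambda+\lambda^{-1})x+1$ or $x\pm 1$, and that $\lambda+\lambda^{-1}$ remains even. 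An alternative, perhaps cleaner, route to the same conclusion is to avoid tensor powers entirely: note $4k^2$ is divisible by $2$, apply Proposition \ref{F2Zr} to $H=F_2\rtimes_{\varphi^2}\mathbb{Z}$ to get that $H$ is residually $2$-finite with the stronger structural information that its finite $2$-quotients can be taken of the form $\mathbf{P}\rtimes\mathbb{Z}_{2^m}$, and then observe that $G$ acts on the system of such quotients of $H$ so that the same verbal kernels $P\lhd F_2$ are $\varphi$-invariant — this is exactly the mechanism inside the proof of Theorem \ref{Fpres}, and carrying it out for $G$ with $t$ in place of $t^2$ only costs replacing $m$ by $m$ times the relevant exponent. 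Either way, the heart of the matter is the parity/reciprocity control of the eigenvalues, and once that is in hand the residual $2$-finiteness of $G$ follows formally.
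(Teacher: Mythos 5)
Your main line of argument is correct, but it takes a genuinely different route from the paper, and your reason for choosing it rests on a misconception. The paper also passes to $H=\langle x,y,t^{2}\rangle\cong F_2\rtimes_{\varphi^{2}}\mathbb{Z}$ and gets residual $2$-finiteness of $H$ from Proposition \ref{F2Zr}; but, contrary to your claim that the upgrade from $H$ to $G$ ``does not follow'', it does follow here precisely because the index is $2$ and $H\lhd G$: the paper takes normal subgroups $H_i\le H$ of $2$-power index with trivial intersection, replaces each by a verbal subgroup $V_i$ of $H$ of finite $2$-index (\cite[Exercise 15.2.3]{KM}), notes that $V_i$ is then normal in $G$ with $|G/V_i|$ a power of $2$, and intersects finitely many of them to get finite $2$-quotients of $G$ separating elements. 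This is just the root-class property of finite $2$-groups (Gruenberg's lemma quoted in Section \ref{s2}); your objection is valid only when the index is not a $p$-power or the subgroup is not normal. Your route instead redoes the tensor-power analysis directly for $\det[\overline{\varphi}]=-1$, showing every irreducible factor of the characteristic polynomial of $[\overline{\varphi}]^{\otimes k}$ takes an even value at $1$, then invokes Proposition \ref{MFcri} b) and Theorem \ref{Fdres}; this works and parallels what the paper itself does in Theorem \ref{w2} for odd trace, and your separate treatment of $\mathrm{tr}[\overline{\varphi}]=0$ via integer eigenvalues is correct. Two bookkeeping slips should be fixed, though neither affects the parity conclusion: for $\det[\overline{\varphi}]=-1$ and odd exponents the conjugate pairs of eigenvalues of $[\overline{\varphi}]^{\otimes k}$ have the form $\{\pm r^{j},\mp r^{-j}\}$ (product $-1$), so the quadratic factors are $x^{2}\mp(r^{j}-r^{-j})x-1$ rather than $x^{2}-(\mu+\mu^{-1})x+1$, and the power-sum recursion is $s_{n+1}=\mathrm{tr}[\overline{\varphi}]\,s_{n}+s_{n-1}$; since $s_{0}=2$ and $s_{1}=\mathrm{tr}[\overline{\varphi}]$ are even, all $s_{n}$ are even and every irreducible factor $p$ still satisfies $p(1)\in 2\mathbb{Z}$. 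In sum: the paper's proof is shorter because it reuses the $\det=1$ computation and a standard $2$-power-index ascent, while yours yields the stronger intermediate fact that all $\overline{G}_k$ for $[\overline{\varphi}]$ itself are residually $2$-finite, without passing to a subgroup; your concluding ``alternative, cleaner route'' sketch, however, is too vague to count as a proof and should either be dropped or replaced by the verbal-subgroup argument above.
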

 \begin{proof} By Lemmas  \ref{F2Zvr} and \ref{F2Z2r}, $G$ contains a residually nilpotent subgroup $H = \langle x, y, t^{2} \rangle$ of index  $2$. Since $[\overline{\varphi^{2}}]=[\overline{\varphi}]^{2},$ $\mathrm{det}
([\overline{\varphi}]^{2})=1$ and $\mathrm{tr}([\overline{\varphi}]^{2})=(\mathrm{tr}[\overline{\varphi}])^{2}+2$, by Lemma \ref{F2Zr} $H$  is residually $2$-finite. Hence, $H$ has decreasing central series of normal subgroups of finite $2$-indexes with trivial intersection:
$$
H \geq H_1 \geq H_2 \geq \ldots,~~\bigcap_{i=1}^{\infty} H_i = 1.
$$
Since $H$ is finitely generated and $|H : H_i| = 2^{n_i}$ for some $n_i \in \mathbb{N}$, by (\cite[Exercise 15.2.3]{KM}) $H_i$ contains some verbal subgroup $V_i$ of $H$ which has finite $2$-index in $H$. It is easy to see that $H$ is normal in $G$. Hence, we have
$$
G \vartriangleright H \vartriangleright H_i \vartriangleright V_i,~~~i \geq 1.
$$
Using the fact that $V_i$ is a verbal subgroup of $H$ and $H \lhd G$, we get that $V_i$ is normal in $G$ and  $|G/V_i| = 2^{n_i+1}$. Further,  the series of groups
$$
W_k = \bigcap_{i=1}^{k} V_i
$$
is a normal series with trivial intersection and $G / W_k$ are finite 2-groups.
Hence, $G$ is residually  $2$-finite.
\end{proof}

\subsection{Groups with the length of the lower central series  $\omega^2$}

In this subsection we prove that in all remaining cases
 for $\mathrm{det}[\overline{\varphi}]$ and $\mathrm{tr}[\overline{\varphi}]$, group
$F_{2}\rtimes_{\varphi}\mathbb{Z}$ has the length of the lower central series equal to $\omega^2$.

\begin{theorem}\label{w2}
Let $G = F_{2}\rtimes_{\varphi}\mathbb{Z}$. Suppose that  $\mathrm{det}
[\overline{\varphi}]=-1$ and  $\mathrm{tr}[\overline{\varphi}]$ is an odd number,  $\mathrm{tr}[\overline{\varphi}]\neq\pm1$, then $\gamma_{\omega^{2}}(G)=1$.
\end{theorem}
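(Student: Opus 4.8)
The plan is to combine the structural results already developed for free-by-cyclic groups with the residual $p$-finiteness machinery of Section \ref{pFnZ} and the dichotomy of Lemma \ref{Nilw2}. The hypothesis $\det[\overline{\varphi}]=-1$, $\mathrm{tr}[\overline{\varphi}]$ odd and $\neq\pm1$ places us squarely in the case where $G$ is \emph{not} residually nilpotent: indeed, by Proposition \ref{F2Znr} we do not have $\gamma_\omega(G)=F_2$ (that would require $\mathrm{tr}[\overline{\varphi}]=\pm1$), and by the criterion in Theorem \ref{NAF2Z} the group is not residually nilpotent either, since $\mathrm{tr}[\overline{\varphi}]$ is odd rules out the $\det=-1$ residually nilpotent case. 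So the goal reduces to: show $\gamma_{\omega^2}(G)=1$ (and then automatically the length of the lower central series is exactly $\omega^2$, by the second half of Lemma \ref{Nilw2}, since $F_2\leq G$ forces all $\gamma_i(G)\neq 1$).

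\emph{First} I would invoke Lemma \ref{Nilw2}: it suffices to prove that each group $\widehat{G}_i = \gamma_i(F_2)/\gamma_{i+1}(F_2)\rtimes_{\widehat{\varphi}_i}\mathbb{Z}$ is residually nilpotent. By Corollary \ref{GGcri}(a) it is in turn enough to show each $\overline{G}_k = (F_2^{ab})^{\otimes k}\rtimes_{\overline{\varphi}_k}\mathbb{Z}$ is residually nilpotent. \emph{Next}, I would analyze the eigenvalues: since $\det[\overline{\varphi}]=-1$, the eigenvalues of $[\overline{\varphi}]$ are $r, -r^{-1}$ with $r\cdot(-r^{-1})=-1$ and $r + (-r^{-1}) = \mathrm{tr}[\overline{\varphi}]$; as $\mathrm{tr}[\overline{\varphi}]\neq\pm1$ and is an integer, these eigenvalues are not integers, so they are a pair of irrational (or complex) conjugate algebraic units. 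The characteristic polynomial is the irreducible $g(x)=x^2-\mathrm{tr}[\overline{\varphi}]\,x-1$, and $g(1) = -\mathrm{tr}[\overline{\varphi}]$, which is odd. By Proposition \ref{MFcri}(a), $\overline{G}_1 = \overline{G}$ is residually nilpotent since $g(1)\neq\pm1$.

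\emph{The main obstacle} is controlling the eigenvalues of the tensor powers $[\overline{\varphi}_k] = [\overline{\varphi}]^{\otimes k}$ and showing that every irreducible factor $p_i(x)$ of its characteristic polynomial satisfies $p_i(1)\neq\pm1$. The eigenvalues of $[\overline{\varphi}_k]$ are the products of $k$ eigenvalues of $[\overline{\varphi}]$, i.e. they are of the form $(-1)^{j}r^{k-2j}$ for $0\le j\le k$. I would pass to $[\overline{\varphi}]^{\otimes 2}$ or more cleverly to the residually $p$-finite route: since $g(1)=-\mathrm{tr}[\overline{\varphi}]$ is odd we cannot directly use a prime $p\mid g(1)$ with $p\neq\pm1$ unless $\mathrm{tr}[\overline{\varphi}]\neq\pm1$ — which holds. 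So I would take an odd prime $p$ dividing $\mathrm{tr}[\overline{\varphi}]$ (such $p$ exists and $p\neq\pm1$ since $|\mathrm{tr}[\overline{\varphi}]|>1$), and argue as in the proof of Proposition \ref{F2Zr}: the key identity $(r^{n}+(-r^{-1})^{n})$ satisfies a recursion modulo $p$ showing all eigenvalue-products of $[\overline{\varphi}]^{\otimes k}$ are roots of polynomials $x^2-(\text{something}\equiv 0 \bmod p)\,x\pm1$ or linear factors, hence every irreducible factor $p_i$ of the characteristic polynomial of $[\overline{\varphi}_k]$ has $p_i(1)\in p\mathbb{Z}$. By Proposition \ref{MFcri}(b) each $\overline{G}_k$ is residually $p$-finite, a fortiori residually nilpotent. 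Then Lemma \ref{Nilw2} gives $\gamma_{\omega^2}(G)=1$, completing the proof. The delicate bookkeeping will be the mod-$p$ recursion for products of the \emph{non-symmetric} eigenvalue pair $r,-r^{-1}$ (the sign $(-1)^j$ complicates the Chebyshev-style identity compared to the $\det=+1$ case of Proposition \ref{F2Zr}), so I would treat the cases $k$ even and $k$ odd separately, or better, work inside $\overline{G}_2$ where $\det[\overline{\varphi}^{\otimes 2}] = 1$ and reduce to the already-proven Proposition \ref{F2Zr}.
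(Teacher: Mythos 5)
Your reduction to the residual nilpotence of all the groups $\overline{G}_k$ (via Lemma \ref{Nilw2} and Corollary \ref{GGcri}, i.e.\ Theorem \ref{Mres}) is exactly the paper's strategy, and your eigenvalue bookkeeping for $[\overline{\varphi}]^{\otimes k}$ (eigenvalues $(-1)^j r^{k-2j}$) is right. The gap is in the step where you try to establish residual nilpotence by proving the stronger statement that all $\overline{G}_k$ are residually $p$-finite for an odd prime $p\mid\mathrm{tr}[\overline{\varphi}]$. This is false already for $k=2$: the product $r\cdot(-r^{-1})=-1$ is an eigenvalue of $[\overline{\varphi}]^{\otimes 2}$, so $x+1$ is an irreducible factor of $P_{\overline{\varphi}_2}(x)$, and $(x+1)|_{x=1}=2\notin p\mathbb{Z}$ for odd $p$; hence Proposition \ref{MFcri}(b) does not apply. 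Nor can you switch to $p=2$, since $P_{\overline{\varphi}}(1)=-\mathrm{tr}[\overline{\varphi}]$ is odd, so $\overline{G}_1$ is not residually $2$-finite. In fact your route is internally inconsistent: if all $\overline{G}_k$ were residually $p$-finite for one prime $p$, Theorem \ref{Fdres} would force $G$ itself to be residually $p$-finite, contradicting the fact (which you noted at the outset, and which is the content of Theorem \ref{lw2}) that $G$ is not residually nilpotent in this case. The fallback ``work inside $\overline{G}_2$ and reduce to Proposition \ref{F2Zr}'' also does not close the gap: Proposition \ref{F2Zr} is about $2\times2$ matrices acting on $F_2^{ab}$, while $\overline{G}_2$ is $\mathbb{Z}^4\rtimes\mathbb{Z}$, and replacing $\varphi$ by $\varphi^2$ only gives information about an index-two subgroup, not about the tensor-power modules $\overline{G}_k$ of $G$ itself (and it says nothing about odd $k$).

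What is actually needed (and what the paper does) is to verify the weaker criterion of Proposition \ref{MFcri}(a) directly: group the eigenvalues of $[\overline{\varphi}]^{\otimes k}$ into pairs $\{\pm r^i,\mp r^{-i}\}$ (for $k$ odd) and $\{\pm r^i,\pm r^{-i}\}$ (for $k$ even), so every irreducible factor is $x\pm1$, or $f_\rho(x)=x^2+\rho x-1$ with $\rho=\pm(r^i-r^{-i})$, or $f_\delta(x)=x^2+\delta x+1$ with $\delta=\pm(r^i+r^{-i})$; then show by induction that $\rho,\delta$ are integers and use the growth of $|r|>1$ (from $|r-r^{-1}|\geq 2$) to get $\rho\neq\pm1$ and $\delta\notin\{-3,-1\}$, so that $f_\rho(1)=\rho\neq\pm1$ and $f_\delta(1)=2+\delta\neq\pm1$, while $(x\pm1)|_{x=1}\in\{0,2\}\neq\pm1$. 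This gives residual nilpotence of every $\overline{G}_k$ without any divisibility by a fixed prime, and then Theorem \ref{Mres} yields $\gamma_{\omega^2}(G)=1$. Your proposal as written does not supply these size estimates, and its central divisibility claim is the step that fails.
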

\begin{proof} In these cases the characteristic polynomial of  $[\overline{\varphi} ]$ has the real roots  $r$ and $-r^{-1}$,
such that $r-r^{-1}\in\mathbb{Z}\backslash\{0, \pm1\}$. Also, $[\overline{\varphi}]$ is conjugated to the diagonal matrix  $\mathrm{diag}(r, -r^{-1})$.

Using induction by   $k$, it is possible to prove that $[\overline{\varphi}_{k}]=[\overline{\varphi}]^{\otimes k}$ is conjugated to the diagonal matrix
with diagonal elements  $\pm r^{i}, i\in\mathbb{Z}$. If $k=2n+1, n\geq0,$ then  all $i$ are equal to all odd numbers such that $|i| \leq k$. The set of
 diagonal elements is divided on the pairs  $\{r^{i}, -r^{-i}\}$ or $\{-r^{i}, r^{-i}\}$.
 If $k=2n, n\geq1,$ then all $i$ are equal to all even numbers such that $|i| \leq k$.
In this case the set of diagonal elements is  divided on the pairs of the form $\{r^{i}, r^{-i}\}$ or $\{-r^{i}, -r^{-i}\}$.
 Furthermore, in the diagonal matrix which is conjugated to  $[\overline{\varphi}]^{\otimes k}$ there exists only one pair $\{r^k, -r^{-k} \}$ for $k=2n+1$ and only one pair $\{r^k, r^{-k} \}$ for $k=2n$.

It is evident  that our pairs of roots  for  $i\neq0$ are the roots  of polynomials of degree 2:
$$
f_{\rho}(x)=x^{2}+\rho x-1~\mbox{or}~ f_{\delta}(x)=x^{2}+\delta x+1,
$$
where $\rho=\pm(r^{i}-r^{-i}),$ $i=2n+1, n\geq0,$ and $\delta=\pm(r^{i}+r^{-i}), i=2n, n\geq1$.
Let us prove that  in these polynomials the coefficients
$\rho, \delta$ are integers and  $\rho\neq\pm1, \delta\not\in \{-3, -1\}$.
Indeed, the diagonal matrices are conjugated  to  matrices $[\overline{\varphi}]^{\otimes k}, k\geq1,$ with integers coefficients, their traces are integers. Hence, for the diagonal matrices
$$
\mathrm{diag}(r, -r^{-1})~~\mbox{and}~~ \mathrm{diag}(r^{2}, -1, -1, r^{-2}),
$$
 which are conjugated  to $[\overline{\varphi}]$ and $[\overline{\varphi}]^{\otimes2}$, respectively,  we get that the sums  $r-r^{-1}$ and $r^{2}+r^{-2}$ are integers. Further, in the diagonal matrix which is conjugated  to  $[\overline{\varphi}]^{\otimes k}, k\geq1,$ the pair  $r^{k}, -r^{-k}$  or $r^{k}, r^{-k}$, in depending on the parity of  $k$, occurs only once. By induction hypothesis all other pairs with $i<k$ give the integer sums. Hence, the last unique pair for  $i=k$ gives the integer sum. Hence, all  $\rho, \delta$ are integers.

Let us prove that the coefficients  satisfy the need restrictions, i.e.  $\rho\neq\pm1, \delta\not\in \{-3, -1\}$. By assumption $|r-r^{-1}|>1$.  At first consider the coefficient  $\rho$. Without loss of generality, we can assume that  $|r|>1$. This implies the inequalities
$$
|r|^{i}>\frac{(1+|r|)^{i}}{|r|^{i}}>\frac{1+|r|^{i}}{|r|^{i}}.
$$
So  $r^{i}-r^{-i}>1$ if $r>1$. In these cases  $i$ are odd, hence the inequality  $r<-1$
implies $r^{i}-r^{-i}<-1$. So $\rho=\pm(r^{i}-r^{-i})\neq\pm1$. For $\delta =\pm(r^{i}+r^{-i})$ the numbers  $i$ are even. We proved that  $\rho=\pm(r^{i}-r^{-i})=\pm2, \pm3,\dots,$ $ i=2n+1, n\geq0.$  Since
$$
r^{\pm1}=\frac{-\rho\pm\sqrt{\rho^{2}+4}}{2},
$$
we have
$$
r^{2n}\geq \left( \frac{2+\sqrt{2^{2}+4}}{2}\right)^{2n}=(1+\sqrt{2})^{2n}>3^{n}.
$$
 Hence,  $\delta=\pm(r^{i}+r^{-i})\not\in \{-3, -1\},$ for  $i=2n, n\geq1$.

To prove residually nilpotence of $\overline{G}_k=(F_2^{ab})^{\otimes k}\rtimes_{\overline{\varphi}_{k}}\mathbb{Z}, k\geq1$
we can use Proposition
 \ref{MFcri}. Irreducible polynomials of the characteristic polynomials  $P_{\overline{\varphi}_{k}}(x)$ are polynomials of the form  $f_{\rho}(x)=x^{2}+\rho x-1$ or $f_{\delta}(x)=x^{2}+\delta x+1$, where $\rho, \delta$ are integers and $\rho\neq\pm1, \delta\not\in \{-3, -1\}$, or $p(x)=x\pm1$. It is clear that  $f_{\rho}(1)=\rho\neq\pm1$, $f_{\delta}(1)=2+\delta\neq\pm1$ and $p(1)\neq\pm1$.

 By Proposition  \ref{MFcri} all groups  $\overline{G}_k,$  $k\geq1,$ are residually nilpotent.  By Theorem \ref{Mres} we get $\gamma_{\omega^{2}}(G)=1$.
\end{proof}

We shall use the following claim.

\begin{lemma}\label{kruhcG} Let $G=F_{2}{\rtimes}_\varphi \mathbb{Z}$, $w\in\gamma_{i}(F_{2})$
and $w^{n}\in[\gamma_{i}(F_{2}), G]$, then
$$[w, g_{1},\dots, g_{s}]^{n}\in[\gamma_{i}(F_{2}),   _{s+1} G],$$ for all $s\geq1$  and $g_{i}\in G, i=1,\dots,s$.
\end{lemma}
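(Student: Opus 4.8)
The plan is to reduce the statement to a single congruence and then prove that congruence by a double induction on commutator length. Throughout write $M_0=\gamma_i(F_2)$ and $M_k=[\gamma_i(F_2),{}_kG]$ for $k\ge 1$. Since $\gamma_i(F_2)$ is characteristic in $F_2$ and $F_2\lhd G$, one checks from the conjugation rule $[h,g_1,\dots,g_k]^{g'}=[h^{g'},g_1^{g'},\dots,g_k^{g'}]$ that every $M_k$ is normal in $G$ and that $[M_{k-1},G]=M_k$; these are routine commutator-calculus facts and I will use them freely. I claim it suffices to prove that for every $w\in\gamma_i(F_2)$, all $g_1,\dots,g_s\in G$ and all $m\ge 1$,
\[
[w^m,g_1,\dots,g_s]\equiv[w,g_1,\dots,g_s]^m \pmod{M_{s+1}}.
\]
Indeed, putting $m=n$ and using $w^n\in[\gamma_i(F_2),G]=M_1$ together with $[M_k,G]\subseteq M_{k+1}$ gives $[w^n,g_1,\dots,g_s]\in M_{s+1}$, hence $[w,g_1,\dots,g_s]^n\in M_{s+1}=[\gamma_i(F_2),{}_{s+1}G]$, which is exactly the assertion of the lemma.

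The congruence is proved by induction on $s$. For $s=1$ I run a sub-induction on $m$ via the identity $[ab,c]=[a,c][a,c,b][b,c]$: since $w\in\gamma_i(F_2)=M_0$ we have $[w,g_1]\in M_1$ and therefore $[w,g_1,w^{m-1}]\in[M_1,G]=M_2$, so $[w^m,g_1]=[w,g_1]\,[w,g_1,w^{m-1}]\,[w^{m-1},g_1]\equiv[w,g_1]\,[w^{m-1},g_1]\pmod{M_2}$, and the case $m-1$ closes the sub-induction. For the inductive step ($s\ge 2$) set $u=[w,g_1,\dots,g_{s-1}]\in M_{s-1}$. By the inductive hypothesis for $s-1$ (applied with the same $w$ and $m$) we may write $[w^m,g_1,\dots,g_{s-1}]=u^mz$ with $z\in M_s$, so
\[
[w^m,g_1,\dots,g_s]=[u^mz,g_s]=[u^m,g_s]^z\,[z,g_s].
\]
Now $[z,g_s]\in[M_s,G]=M_{s+1}$; and since $u^m\in M_{s-1}$ we get $[u^m,g_s]\in M_s$, whence $[u^m,g_s]^z=[u^m,g_s]\,[u^m,g_s,z]$ with $[u^m,g_s,z]\in[M_s,M_s]\subseteq M_{s+1}$. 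Finally the same sub-induction on $m$ as in the base case, now carried out with $u$ in place of $w$ (using $[u,g_s]\in[M_{s-1},G]=M_s$, hence $[u,g_s,u^{m-1}]\in[M_s,G]=M_{s+1}$), yields $[u^m,g_s]\equiv[u,g_s]^m\pmod{M_{s+1}}$. Combining the three congruences gives $[w^m,g_1,\dots,g_s]\equiv[u,g_s]^m=[w,g_1,\dots,g_s]^m\pmod{M_{s+1}}$, completing the induction.

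The delicate point — and the one step that needs attention, everything else being mechanical manipulation with the identities of Section~\ref{s2} and the normality of the $M_k$ — is the bookkeeping of which ``layer'' each error term lives in: the term $[u,g_s,u^{m-1}]$ must be shown to lie in $M_{s+1}$ rather than merely in $M_2$, and this is possible precisely because $u=[w,g_1,\dots,g_{s-1}]$ already belongs to $M_{s-1}$, so that $[u,g_s]\in M_s$. Once this layer tracking is set up correctly (as above), the argument goes through uniformly, and in particular does not use anything about $F_2\rtimes_\varphi\mathbb{Z}$ beyond the normality of $\gamma_i(F_2)$ in $G$.
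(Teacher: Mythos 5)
Your proof is correct. It differs from the paper's mainly in how the induction is organized: the paper inducts on $s$ on the statement itself, starting from $s=0$ (where the claim is just the hypothesis $w^{n}\in[\gamma_i(F_2),G]$) and at each step expanding $[[w,g_1,\dots,g_s]^{n},g_{s+1}]$ via $[ab,c]=[a,c][a,c,b][b,c]$ to obtain $[[w,g_1,\dots,g_s]^{n},g_{s+1}]\equiv[w,g_1,\dots,g_s,g_{s+1}]^{n}\pmod{[\gamma_i(F_2),{}_{s+2}G]}$; since the left-hand side already lies in $[\gamma_i(F_2),{}_{s+2}G]$ by the inductive hypothesis and normality, the step closes. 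So in the paper the power $n$ is peeled off at the \emph{last} slot and the hypothesis on $w^{n}$ is carried through every step of the induction, whereas you first isolate the standalone congruence $[w^{m},g_1,\dots,g_s]\equiv[w,g_1,\dots,g_s]^{m}\pmod{[\gamma_i(F_2),{}_{s+1}G]}$ (power in the \emph{first} slot, arbitrary exponent, no hypothesis on $w$ beyond $w\in\gamma_i(F_2)$) and invoke $w^{n}\in[\gamma_i(F_2),G]$ only once, at the very end. Both arguments run on exactly the same fuel --- the identity $[ab,c]=[a,c][a,c,b][b,c]$ together with the fact that the layers $M_k=[\gamma_i(F_2),{}_{k}G]$ are normal in $G$ with $[M_{k},G]=M_{k+1}$, a fact the paper also uses tacitly --- so the difference is organizational: your version pays with the nested induction on $s$ and $m$ and the extra conjugation bookkeeping (the terms $[u^{m},g_s,z]$ and $[z,g_s]$), and buys a slightly more general, reusable congruence; your layer tracking ($u\in M_{s-1}$, hence $[u,g_s]\in M_s$ and $[u,g_s,u^{m-1}],[u^{m},g_s,z]\in M_{s+1}$) is exactly right and is the same point the paper's expansion relies on.
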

\begin{proof} We can assume that for  $s=0$ the commutator $[w, g_{1},\dots, g_{s}]$ is equal to $w$. Then by assumption  $w^{n}\in[\gamma_{i}(F_{2}), G]$, i.e. for $s=0$ the lemma is true. %it is evident.

 Let
$$
[w, g_{1},\dots, g_{s}]^{n}\in[\gamma_{i}(F_{2}),  _{s+1} G].
$$
 We will prove that  $$[w, g_{1},\dots, g_{s}, g_{s+1}]^{n}\in[\gamma_{i}(F_{2}),   _{s+2} G].$$
In this case  $$[[w, g_{1},\dots, g_{s}]^{n}, g_{s+1}]\in[\gamma_{i_{0}}(F_{2}), _{s+2} G].$$
Further, from the commutator identity  $[uv, w]=[u, w][u, w, v][v, w]$ follows
$$[[w, g_{1},\dots, g_{s}]^{n}, g_{s+1}]=[w, g_{1},\dots, g_{s}, g_{s+1}][w, g_{1},\dots, g_{s}, g_{s+1}, [w, g_{1},\dots, g_{s}]^{n-1}]\cdot$$
$$\cdot[w, g_{1},\dots, g_{s}, g_{s+1}][w, g_{1},\dots, g_{s}, g_{s+1}, [w, g_{1},\dots, g_{s}]^{n-2}]\cdots$$
$$\cdots[w, g_{1},\dots, g_{s}, g_{s+1}][w, g_{1},\dots, g_{s}, g_{s+1}, [w, g_{1},\dots, g_{s}]][w, g_{1},\dots, g_{s}, g_{s+1}],$$
that is
$$[[w, g_{1},\dots, g_{s}]^{n}, g_{s+1}]\equiv[w, g_{1},\dots, g_{s}, g_{s+1}]^{n}\mod[\gamma_{i}(F_{2}),  _{s+2} G].$$
Hence $[w, g_{1},\dots, g_{s}, g_{s+1}]^{n}\in[\gamma_{i}(F_{2}), _{s+2} G],$ and the induction hypothesis implies the need claim.

\end{proof}

It is well known that if a group  $G$ is generated by a set  $M$, then its terms of the lower central series  $\gamma_{i}(G), i\geq1,$ are generated by the simple commutators of weight  $i$ on elements of  $M$ and the next term of the lower central series $\gamma_{i+1}(G)$ \cite[Section 17]{KM}.
For example, in  $F_{2}=\langle x, y\rangle$  the term  $\gamma_{i}(F_{2}), i\geq2,$  is generated by simple commutators  $[x, y, g_{3},\dots, g_{i}]$ and $[y, x, g_{3},\dots, g_{i}],$ where  $g_{3},\dots, g_{i}\in\{x, y\}$, and by the next term  $\gamma_{i+1}(F_{2})$. From these observations follows

\begin{lemma}\label{centrG} Let $G=F_{2}{\rtimes}_\varphi \mathbb{Z}$, $F_{2}=\langle x, y\rangle$
and $\mathbb{Z}=\langle t\rangle$, then the subgroup
$$
[\gamma_{i}(F_{2}), _{s} G],~~~  i\geq2, s\geq1,
$$  is generated by the simple commutators  $$[x, y, g_{3},\dots, g_{i+1},\dots, g_{+s}]  \  \text{and} \ [y, x, g_{3},\dots, g_{i+1},\dots, g_{i+s}],$$ where $g_{j}\in \{x, y\},$ for $j\leq i,$ $g_{j}\in \{x, y, t\},$ for $j>i,$ and the subgroup $[\gamma_{i}(F_{2}), _{s+1}G]$.
\end{lemma}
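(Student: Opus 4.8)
The plan is to argue by induction on $s\ge 1$, at each step pushing the known generating set of $\gamma_i(F_2)$ modulo $\gamma_{i+1}(F_2)$ through one more commutation with $G=\langle x,y,t\rangle$, the error produced being absorbed into the next deeper term. Throughout one uses that $\gamma_i(F_2)$, being verbal in $F_2$ and $F_2\unlhd G$, is normal in $G$; consequently every $[\gamma_i(F_2),{}_{s}G]$ is normal in $G$ and $[\gamma_i(F_2),{}_{s+1}G]=[[\gamma_i(F_2),{}_{s}G],G]$. The recurring tool is the following ``distribution'' step: if $H\unlhd G$, then modulo $[[H,G],G]$ the commutator $[a,g]$ is ``bi-additive'' in $a\in H$ and $g\in G$, i.e., $[a_1a_2,g]\equiv[a_1,g][a_2,g]$, $[a,g_1g_2]\equiv[a,g_1][a,g_2]$ and $[a^{-1},g]\equiv[a,g]^{-1}\equiv[a,g^{-1}]$; this follows from the identities $[a_1a_2,g]=[a_1,g][a_1,g,a_2][a_2,g]$ and (\ref{f1}), since every correction term is a left-normalized commutator with first two entries in $H$ and a trailing entry in $G$, hence lies in $[[H,G],G]$ by the definition of $[\,\cdot\,,{}_{k}\,\cdot\,]$ and Lemma \ref{KhC}. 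It follows that if $H$ is generated by a set $S$ together with a normal subgroup $R\unlhd G$ satisfying $[R,G]\le[[H,G],G]$, then $[H,G]$ is generated by $\{[s,g]:s\in S,\ g\in\{x,y,t\}\}$ together with $[[H,G],G]$.

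For $s=1$ apply this with $H=\gamma_i(F_2)$, $R=\gamma_{i+1}(F_2)$, and $S$ equal to the set of simple commutators $[x,y,g_3,\dots,g_i]$ and $[y,x,g_3,\dots,g_i]$ with $g_3,\dots,g_i\in\{x,y\}$: indeed $\gamma_i(F_2)=\langle S,\gamma_{i+1}(F_2)\rangle$ by the fact recalled before the lemma, and $\gamma_{i+1}(F_2)=[\gamma_i(F_2),F_2]\le[\gamma_i(F_2),G]$ gives $[\gamma_{i+1}(F_2),G]\le[\gamma_i(F_2),{}_{2}G]$, so the hypothesis on $R$ holds. Hence $[\gamma_i(F_2),G]$ is generated by the commutators $[x,y,g_3,\dots,g_i,g_{i+1}]$ and $[y,x,g_3,\dots,g_i,g_{i+1}]$ with $g_3,\dots,g_i\in\{x,y\}$, $g_{i+1}\in\{x,y,t\}$, together with $[\gamma_i(F_2),{}_{2}G]$, which is the assertion for $s=1$.

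For the inductive step assume the assertion for $s$, so $[\gamma_i(F_2),{}_{s}G]$ is generated by the set $D_s$ of simple commutators $[x,y,g_3,\dots,g_{i+s}]$ and $[y,x,g_3,\dots,g_{i+s}]$ with $g_j\in\{x,y\}$ for $j\le i$ and $g_j\in\{x,y,t\}$ for $j>i$, together with $[\gamma_i(F_2),{}_{s+1}G]$. Apply the distribution step with $H=[\gamma_i(F_2),{}_{s}G]$, $S=D_s$ and $R=[\gamma_i(F_2),{}_{s+1}G]=[H,G]$; the hypothesis $[R,G]\le[[H,G],G]$ holds trivially, since both sides equal $[\gamma_i(F_2),{}_{s+2}G]$. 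We conclude that $[\gamma_i(F_2),{}_{s+1}G]=[H,G]$ is generated by the commutators $[d,g]$ with $d\in D_s$ and $g\in\{x,y,t\}$, together with $[\gamma_i(F_2),{}_{s+2}G]$. Since $[d,g]=[x,y,g_3,\dots,g_{i+s},g]$ or $[y,x,g_3,\dots,g_{i+s},g]$, these are exactly the elements of $D_{s+1}$, which completes the induction.

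The only delicate point is bookkeeping: one must check that every correction term produced while ``distributing'' $[\,\cdot\,,G]$ over the words representing an element of $\gamma_i(F_2)$ (resp. of $[\gamma_i(F_2),{}_{s}G]$) and over the words in $x,y,t$ lands in the next deeper term $[\gamma_i(F_2),{}_{2}G]$ (resp. $[\gamma_i(F_2),{}_{s+2}G]$); this is precisely where the normality of $\gamma_i(F_2)$, hence of every $[\gamma_i(F_2),{}_{s}G]$, in $G$ and Lemma \ref{KhC} enter. The accompanying absorption $[\gamma_{i+1}(F_2),G]\le[\gamma_i(F_2),{}_{2}G]$ is what allows $\gamma_{i+1}(F_2)$ to disappear from the final statement, so that only $[\gamma_i(F_2),{}_{s+1}G]$ survives alongside the listed simple commutators.
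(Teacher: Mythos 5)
Your argument is correct, and it is essentially a rigorous write-up of what the paper leaves implicit: the paper states the lemma as an immediate consequence of the standard fact that $\gamma_i(F_2)$ is generated by the weight-$i$ simple commutators together with $\gamma_{i+1}(F_2)$, and your induction on $s$ with the "distribution" step (corrections absorbed into $[\gamma_i(F_2),{}_{s+2}G]$, and $\gamma_{i+1}(F_2)$ absorbed via $\gamma_{i+1}(F_2)\le[\gamma_i(F_2),G]$) is exactly the routine commutator calculus needed to justify this. Two cosmetic remarks: the correction terms are left-normalized commutators of the form $[h,g_1,g_2]$ with $h\in H$ and $g_1,g_2\in G$ (not "first two entries in $H$"), and Lemma \ref{KhC} is not actually needed there, since $[H,{}_{2}G]\le[[H,G],G]$ follows directly from the definition once $H\unlhd G$.
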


Now we are able to fined  the length of the lower central series for groups from  Theorem \ref{w2}.
\begin{theorem}\label{lw2}
Let $G = F_2 {\rtimes}_\varphi \mathbb{Z}$.
If $\mathrm{det}
[\overline{\varphi}]=-1$ and $\mathrm{tr}[\overline{\varphi}]$ is an odd number,  $\mathrm{tr}[\overline{\varphi}]\neq\pm1$, then the length of the lower central series of  $G$ is equal to $\omega^{2}$.
\end{theorem}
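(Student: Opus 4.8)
The plan is to combine Theorem \ref{w2}, which already gives $\gamma_{\omega^2}(G)=1$ and hence that the length is at most $\omega^2$, with a proof that $\gamma_\omega(G)\neq 1$; granting the latter, the length is forced to equal $\omega^2$ by a soft argument. Since $G/F_2\cong\mathbb Z$ is abelian we have $\gamma_k(G)\subseteq F_2$ for $k\ge 2$, so $\gamma_\omega(G)$ is a normal subgroup of the free group $F_2$. A nontrivial normal subgroup of $F_2$ is free, and it cannot be cyclic (a cyclic normal subgroup of a free group of rank $\ge 2$ would have centraliser of finite index, hence a noncyclic free subgroup centralising it — impossible), so $\gamma_\omega(G)$ is a nonabelian free group and therefore $\gamma_i(\gamma_\omega(G))\neq 1$ for every $i\ge 1$. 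By the inclusion (\ref{iw}) we get $\gamma_{i\omega}(G)\supseteq\gamma_i(\gamma_\omega(G))\neq 1$ for all $i$, whereas $\bigcap_{i}\gamma_{i\omega}(G)=\gamma_{\omega^2}(G)=1$. If the series became stationary at some ordinal $\beta<\omega^2$, then $\gamma_\gamma(G)=\gamma_\beta(G)$ for all $\gamma\ge\beta$ (transfinite induction on $\gamma$), so $\gamma_\beta(G)=\gamma_{\omega^2}(G)=1$, contradicting $\gamma_\beta(G)\supseteq\gamma_{i\omega}(G)\neq 1$ for $i$ with $i\omega>\beta$. Hence the length is exactly $\omega^2$.

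Everything therefore reduces to showing $\gamma_\omega(G)\neq 1$; since $\gamma_\omega(G)$ is torsion free it suffices to find a nontrivial $w\in F_2$ and a fixed power $w^N$ lying in $\gamma_k(G)$ for every $k$. I would take $w=c:=[x,y]\in\gamma_2(F_2)$. Because $\det[\overline\varphi]=-1$ one has $\varphi(c)\equiv c^{-1}\pmod{\gamma_3(F_2)}$, so $[c,t]=c^{-2}d$ with $d\in\gamma_3(F_2)$, and since $\gamma_3(F_2)=[\gamma_2(F_2),F_2]\subseteq[\gamma_2(F_2),G]$ we obtain $c^2=d\,[c,t]^{-1}\in[\gamma_2(F_2),G]\subseteq\gamma_3(G)$. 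On the other hand the commutator $[[x,t],[y,t]]$ lies in $[\gamma_2(G),\gamma_2(G)]\subseteq\gamma_4(G)$ by (\ref{gsum}), and its image in $\gamma_2(F_2)/\gamma_3(F_2)\cong\mathbb Z$ is $\det([\overline\varphi]-E)=-\mathrm{tr}[\overline\varphi]$. Since $\mathrm{tr}[\overline\varphi]$ is odd, $\gcd$ of this with the powers of $2$ produced by iterating $[\,\cdot\,,t]$ on $c^2$ is $1$, so already $c$ (or a fixed power of it) lies in $\gamma_4(G)\gamma_3(F_2)$. The engine of the proof is then an induction upgrading this through the layers $\gamma_m(F_2)/\gamma_{m+1}(F_2)$: using Lemma \ref{kruhcG} to propagate a relation ``$w^n\in[\gamma_i(F_2),G]$'' down the series to ``$[w,g_1,\dots,g_s]^n\in[\gamma_i(F_2),{}_{s+1}G]$'', and Lemma \ref{centrG} to control which basic commutators generate $[\gamma_i(F_2),{}_sG]$ modulo the next term, one shows by induction on $m$ that $c^N\in\gamma_k(G)\,\gamma_m(F_2)$ for every $k$ and every $m$, where at each step the hypotheses $\mathrm{tr}[\overline\varphi]$ odd and $\mathrm{tr}[\overline\varphi]\neq\pm1$ (so $|\mathrm{tr}[\overline\varphi]|\ge 3$) are used to cancel the carried powers of $c$ against the correction terms living in the $m$-th layer.

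Finally, to pass from ``$c^N\in\gamma_k(G)\,\gamma_m(F_2)$ for all $k,m$'' to ``$c^N\in\gamma_\omega(G)$'', one fixes $m$ and observes that the descending chain $\{\gamma_k(G)\gamma_m(F_2)\}_k$ lies in the finitely generated nilpotent (hence Noetherian) group $F_2/\gamma_m(F_2)$, so it stabilises; letting $m\to\infty$ and using $\bigcap_m\gamma_m(F_2)=1$ pins $c^N$ down inside $\bigcap_k\gamma_k(G)=\gamma_\omega(G)$, and $c^N\neq 1$ in the free group $F_2$ gives $\gamma_\omega(G)\neq 1$, completing the proof. The only place where genuine work is required is the layer-by-layer induction of the previous paragraph: one must verify that the error terms contributed by the deeper terms of the lower central series of $F_2$ never conspire to destroy $c^N$, and this is exactly where the arithmetic conditions $\det[\overline\varphi]=-1$, $\mathrm{tr}[\overline\varphi]$ odd, $\mathrm{tr}[\overline\varphi]\neq\pm1$ are indispensable — they fail precisely in the situations handled in the earlier subsections, where the length collapses to $\omega$ or to $2$.
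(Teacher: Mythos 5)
Your overall skeleton matches the paper's (Theorem \ref{w2} plus ``$\gamma_{\omega}(G)\neq 1$ forces length $\omega^{2}$'', with $c=[x,y]$ as the witness and the parity of $\mathrm{tr}[\overline{\varphi}]$ supplying coprimality), but the proof has a genuine gap exactly where you admit ``the only place where genuine work is required'' lies: the layer-by-layer induction producing $c^{N}\in\gamma_{k}(G)\gamma_{m}(F_{2})$ for all $k,m$ is never carried out, and it is not a routine verification. Your two inputs are of different quality: $c^{2}\in[\gamma_{2}(F_{2}),G]\le\gamma_{3}(G)$ is an exact membership, but $[[x,t],[y,t]]$ only gives $c^{-\mathrm{tr}[\overline{\varphi}]}$ \emph{modulo} $\gamma_{3}(F_{2})$, so from the second step on you are dragging error terms through the deeper layers $\gamma_{m}(F_{2})/\gamma_{m+1}(F_{2})$, where the induced action has eigenvalues $\pm r^{i}$ and the arithmetic you need must be re-established at every level; nothing in your sketch shows how these corrections are absorbed. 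The paper avoids this entirely: using Lemma \ref{Pphi1} it gets $x^{m},y^{m}\in\gamma_{2}(G)$ with $m=|\mathrm{tr}[\overline{\varphi}]|$ odd, and then Lemmas \ref{kruhcG} and \ref{centrG} (the quotient $[\gamma_{i}(F_{2}),{}_{s}G]/[\gamma_{i}(F_{2}),{}_{s+1}G]$ is abelian of exponent dividing $2$, resp.\ $m$) yield the \emph{exact} memberships $[x,y]^{2^{k}}\in[\gamma_{2}(F_{2}),{}_{k}G]\le\gamma_{k+2}(G)$ and $[x,y]^{m^{k}}\in\gamma_{k+2}(G)$; coprimality of $2^{k}$ and $m^{k}$ then puts $[x,y]$ itself in every $\gamma_{k}(G)$, with no residual terms in $\gamma_{m}(F_{2})$ ever appearing.

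Your closing limiting argument is also incorrect as stated. Finitely generated nilpotent groups are Noetherian, i.e.\ satisfy the \emph{maximal} condition; descending chains such as $\{\gamma_{k}(G)\gamma_{m}(F_{2})\}_{k}$ need not stabilise (already in $F_{2}^{\mathrm{ab}}\cong\mathbb{Z}^{2}$ the chain $([\overline{\varphi}]-E)^{k-1}\mathbb{Z}^{2}$ descends strictly in the residually nilpotent case), and ``letting $m\to\infty$'' only lands $c^{N}$ in $\bigcap_{m}\gamma_{k}(G)\gamma_{m}(F_{2})$, the pro-nilpotent closure of $\gamma_{k}(G)$ in $F_{2}$, which is not a priori $\gamma_{k}(G)$. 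Ironically, if your induction claim really held for all $k$ and all $m$, no limit would be needed: for $m\ge k$ one has $\gamma_{m}(F_{2})\le\gamma_{m}(G)\le\gamma_{k}(G)$, hence $\gamma_{k}(G)\gamma_{m}(F_{2})=\gamma_{k}(G)$, and $c^{N}\in\gamma_{\omega}(G)$ follows at once. So the proposal is a plausible plan whose decisive inductive step is missing, and whose auxiliary finishing argument does not work; to repair it you should replace the ``mod $\gamma_{m}(F_{2})$'' bookkeeping by the paper's exponent argument via Lemmas \ref{kruhcG} and \ref{centrG}.
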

\begin{proof} By Theorem \ref{w2} we have $\gamma_{\omega^{2}}(G)=1$. Also, from the same theorem follows that all extensions $\gamma_{i}(F_{2})/\gamma_{i+1}(F_{2})\rtimes_{\widehat{\varphi}_{i}}\mathbb{Z},$ $i\geq1,$
where $\widehat{\varphi}_{i}$ is the automorphism of  $\mathbb{Z}$-module
$\gamma_{i}(F_{2})/\gamma_{i+1}(F_{2})$ that is induced by the automorphism  $\varphi$, is residually    nilpotent.

Hence, by Lemma \ref{Nilw2} it is enough  to prove  that $\gamma_{\omega}(G)$ is non-trivial.

The quotient $\gamma_{2}(F_{2})/\gamma_{3}(F_{2})\simeq\mathbb{Z}$ is generated by the image of the commutator  $[x, y]$. Since $\mathrm{det}[\overline{\varphi}]=-1$, the induced automorphism  $\widehat{\varphi}_{2}=-id$. Hence
$$[[x, y], t]=[x, y]^{-1}[x, y]^{t}\equiv[x, y]^{-2}\mod\gamma_{3}(F_{2}).$$
So, $$[x, y]^{2}\equiv[[x, y], t]^{-1}\mod\gamma_{3}(F_{2})$$ and $[x, y]^{2}\in[\gamma_{2}(F_{2}), G]<\gamma_{3}(G)$. By Lemma \ref{kruhcG} we have inclusions $$
[x, y, g_{1},\dots, g_{s}]^{2}, ~~~[y, x, g_{1},\dots, g_{s}]^{2}\in[\gamma_{2}(F_{2}), _{s+1} G], ~~s\geq1,
$$
 where $g_{i}\in\{x, y, t\}$. Further, by Lemma \ref{centrG} the simple commutators
$$[x, y, g_{1},\dots, g_{s}], ~~[y, x, g_{1},\dots, g_{s}], ~~s\geq1,$$  where $g_{i}\in\{x, y, t\}$, generate by modulo of the subgroup
$[\gamma_{2}(F_{2}), ~ _{s+1}G]$ a  finitely generated abelian group with identity  $g^{2}=1$. Using induction by $k\in\mathbb{N},$ it is easy  to check that $[x, y]^{2^{k}}\in[\gamma_{2}(F_{2}),  ~ _{k}G] \leq \gamma_{k+2}(G)$.

By theorem assumption the characteristic polynomial  $P_{\overline{\varphi}}(x)$ is irreducible over  $\mathbb{Z}$ and  $P_{\overline{\varphi}}(1)=-\mathrm{tr}[\overline{\varphi}] \not= 0$. By Proposition \ref{MFcri} the group $F^{ab}_{2}\rtimes_{\overline{\varphi}}\mathbb{Z}$ is residually   $p$-finite for any prime divisor of   $\mathrm{tr}[\overline{\varphi}]$.  Lemma \ref{Pphi1} implies
$$
F^{ab}_{2} \geq \gamma_k(F^{ab}_{2}\rtimes_{\overline{\varphi}}\mathbb{Z}) \geq P^{k-1}_{\overline{\varphi}}(1) F^{ab}_{2}.
$$
Hence, the index
$$
|F^{ab}_{2} : \gamma_k(F^{ab}_{2}\rtimes_{\overline{\varphi}}\mathbb{Z})|
$$
divides $P^{2(k-1)}_{\overline{\varphi}}(1)$.  More precisely,   the images of  $x,  y$ in $F^{ab}_{2}$, under multiplication on  $m^{l}$,  lie in   $\gamma_{l+1}(F^{ab}_{2}\rtimes_{\overline{\varphi}}\mathbb{Z})$. Hence, we have inclusions $$x^{m},   y^{m}\in[F_{2}, t]\gamma_{2}(F_{2}) \leq [F_{2}, G] \leq \gamma_{2}(G).
$$
Lemma \ref{kruhcG} implies
$$
[ x, g_{1},\dots, g_{s}]^{m}, ~~[y, g_{1},\dots, g_{s}]^{m}\in[F_{2}, ~_{s+1}G],~~~ s\geq1.
$$
 In particular,  $[x, y]^{m}\in[F_{2}, G, G] \leq \gamma_{3}(G)$.
By Lemma \ref{centrG} the simple commutators
$[x,  g_{1},\dots, g_{s}],$ $ [y,  g_{1},\dots, g_{s}],$  $s\geq1,$  where $g_{i}\in\{x, y, t\}$, generate by modulo  $[F_{2}, _{s+1} G]$ a finitely generated abelian group with identity  $g^{m}=1$.  By induction  $[x, y ]^{m^{k}}\in\gamma_{k+2}(G), k\in\mathbb{N}.$

Since $m$ and $2$ are mutually prime, we have $[x, y]\in\gamma_{\omega}(G)$. Hence $\gamma_{2}(F_{2})\leq\gamma_{\omega}(G)$.  The theorem is proved.
\end{proof}

\noindent\textit{Proof of Theorem\ref{NAF2Z}.} Follows from Theorems \ref{F2Zr1}, \ref{F2Z2r} and \ref{lw2}.\vspace{2ex} \hfill {\large $\Box$}

\noindent\textit{Proof of Theorem \ref{F2Zc}.} From the proved results %and \ref{Mres}
follows that for the lower central series of   $G = F_2 {\rtimes}_\varphi \mathbb{Z}$ there exist only three possibilities:  $\gamma_{\omega}(G)=\gamma_{2}(G)=F_{2}$ (see Proposition \ref{F2Znr}); or $\gamma_{\omega}(G)=1$ (see Proposition \ref{F2Zr} and Theorem \ref{F2Z2r}); or the length of the lower central series is equal to  $\omega^{2}$  (see Theorems \ref{w2},   \ref{lw2}). \hfill {\large $\Box$}

\section*{Acknowledgments}
Authors are grateful to participants of the seminar ``\'{E}variste Galois'' at
Novosibirsk State University and personally to Yu. I. Sosnovsky for fruitful discussions.

The research  is supported by Ministry of Science and Higher Education of Russia
(agreement No. 075-02-2020-1479/1).


\begin{thebibliography}{HD}

%1
\bibitem{AF}
M. Aschenbrenner, S.  Friedl,
\textit{Residual properties of graph manifold groups} (English summary)
Topology Appl.,  158, no. 10 (2011),  1179--1191.

%2
\bibitem{Azar}
  D. N. Azarov,
 \textit{On the virtually $p$-residual finiteness}, (Russian) Chebyshevskii Sb., 11, no. 3 (2010), 11--20.


%3
%\bibitem{Bar-1}
%V. G. Bardakov,
%textit{Virtual and welded links and their invariants},
%Sib. Elektron. Mat. Izv., \textbf{2} (2005), 196--199.

%5
\bibitem{BBr}
V. G. Bardakov, O. V. Bryukhanov,
\textit{On linear representations of some extensions},
Vestnik Novosibirsk Univ. Ser. Mat. Mekh. Inf., 7
(2007), 45--58 (Russian).

%6
\bibitem{BM}
V. G. Bardakov, R. V. Mikhailov,
\textit{On the residual properties of link groups}, (Russian)
Sibirsk. Mat. Zh. 48 (2007), no. 3, 485--495;
translation in Siberian Math. J. 48 (2007), no. 3, 387--394.

%7
\bibitem{BMVW} V. G. Bardakov, R. Mikhailov, V. V. Vershinin and J. Wu,
On the pure virtual braid group $PV_3$,  Commun. in Algebra, 44, no. 3 (2016), 1350--1378.

\bibitem{BMN}
Bardakov V. G., Mikhalchishina Yu. A., and Neshchadim M. V., Virtual link groups,
Siberian Mathematical Journal, 58, no. 5 (2017), 765--777.

\bibitem{BN-2}
 Bardakov V. G., Neshchadim M. V., Knot Groups and Residual Nilpotence, Proceedings of the
Steklov Institute of Mathematics, 304,  no. 1 (2019),   23--30.

\bibitem{BNN}
 Bardakov V. G., N. Nanda, and Neshchadim M. V., On the lower central series of some virtual
knot groups, Journal of Knot Theory and Its Ramifications, 29, no. 9 (2020),

%8
\bibitem{BN-4}
V. G. Bardakov, M. V. Neshchadim, \textit{On the lower central  series of Baumslag-Solitar groups},
 (Russian) Algebra Logika, 59,  no. 4 (2020),  413--431.


%9
\bibitem{Bau}
G. Baumslag,
\textit{On the residual nilpotence of certain one-relator groups},
Comm. Pure Appl. Math., 21, no. 5 (1968), 491--506.

\bibitem{Bau-1}
G. Baumslag,
\textit{Finitely generated cyclic extensions of free groups are residually finite},
Bull. Austral. Math. Soc., 5, no. 1 (1971), 87--94.



%11
\bibitem{Br}
O. V. Bryukhanov,
\textit{Nilpotent approximability of fundamental groups of compact three-dimensional Sol-manifolds}.
(Russian) Sibirsk. Mat. Zh. 57, no. 2 (2016),  247--258;
translation in Sib. Math. J., 57,  no. 2  (2016), 190--199.

%12
\bibitem{Br-1}
O. V. Bryukhanov,
\textit{Approximation properties and linearity of groups},
J. Math. Sci. (N.Y.) 188, no. 4 (2013),  354--358.

%13
%\bibitem{Bur}
%R. G. Burns, A. Karrass and D. Solitar,
%\textit{A note on groups with separable finitely generated subgroups},
%Bull. Austral. Math. Soc., 36 (1987), 153--160.

\bibitem{GL}
E. A. Gorin, V. Ja. Lin,
\textit{Algebraic equations with continuous coefficients, and certain questions of the algebraic theory of braids}. (Russian)
Mat. Sb. (N.S.),  78 (120), 1969 579--610.

%14
\bibitem{G}
K. W. Gruenberg,
\textit{Residual properties of infinite soluble groups},
Proc. Lond. Math. Soc., 1957, 7,  29--62.

%15
\bibitem{GM}
J. Guaschi, C.de Miranda e Pereiro,
\textit{Lower central and derived series of semi-direct products,
and applications to surface braid groups},
J. Pure Appl. Algebra,  224, no. 7 (2020), 39 pp.

%16
\bibitem{FR1} M.~Falk and R.~Randell,
\textit{The lower central series of a fiber type arrangement},
Invent. Math., 82, (1985), 77--88.

%17
%\bibitem{HW}
%M. F. Hagen,  D. T. Wise,
%\textit{Cubilating mapping tori of polynomial growth free group automorphism}, arXiv: 1605.07879.

%18
\bibitem{KM}
M. I. Kargapolov, Yu. I. Merzlyakov,
\textit{Fundamentals of the Theory of Groups},
Springer-Verlag, New York; Heidelberg; Berlin (1979).

%19
\bibitem{Kh}
E. I. Khukhro, Nilpotent Groups and Their Automorphisms,  Walter de Gruyter, 1993.

%20
%\bibitem{Kour}
%\textit{The Kourovka notebook, Unsolved problems in group theory}, Edited by V.~D.~Mazurov and E.~I.~Khukhro, 19-th. ed.. Russian Academy of Sciences Siberian Division. %Institute of Mathematics, Novosibirsk, 2018.


%21
\bibitem{L}
E. D. Loginova,
\textit{Residual finiteness of the free product of two groups with commuting subgroups},
Siberian Math. J., 40, no. 2 (1999), 341--350.


%22
\bibitem{Mal}
 A. I. Malcev,
 \textit{On homomorphisms onto finite groups},
 Uchen. Zapiski Ivanovsk. ped. instituta, 18, no. 5 (1958), 49--60
 (also in Selected papers, Vol. 1, Algebra,
1976, 450--462) (Russian).


%23
\bibitem{Mal-1}   A. I. Malcev,
\textit{Generalized nilpotent algebras and their associated groups},
Mat. Sbornik N.S., 25, no. 3 (1949), 347--366.


%24
\bibitem{Mal-2}
A. I. Malcev,
\textit{Isomorphic representation of infinite groups by matrices},
Mat. Sb., 8, no. 3 (1940), 405-422.


%25
\bibitem{McC}
J. McCarron,
\textit{Residually nilpotent one-relator groups with nontrivial centre},
Proc. Amer. Math. Soc., 124, no. 1 (1996), 1--5.


%26
\bibitem{Mer}
Yu. I. Merzlyakov, \textit{Rational Groups} (Russian), Nauka, Moscow (1987).


%27
\bibitem{M-1}
R.~Mikhailov,
\textit{On nilpotent and solvable residual finiteness of groups},
Mat. Sb., 196, no. 11 (2005), 109--126.


%28
\bibitem{MP} R.~Mikhailov and I.~B.~S.~Passi,
\textit{Lower Central and Dimension Series of Groups},
Lecture Notes in Mathematics, 1952, Springer-Verlag Berlin Heidelberg, 2009.


%29
\bibitem{M}
R.~Mikhailov,
\textit{A one-relator group with long lower central series},
Forum Math.,  28, no. 2 (2016),  327--331.



%30
\bibitem{Mol1}
D. I. Moldavanskii,
\textit{ On $p$-residually finiteness  of  HNN-extensions}, (Russian)
Vestnik Ivanov. Gos. Univ.,  3 (2000), 129--140.


%31
\bibitem{Mol2}
D. I. Moldavanskii,
\textit{On the intersection of subgroups of finite index in the Baumslag-Solitar groups},
(Russian) Mat. Zametki 87 (2010),  1, 92--100;
translation in Math. Notes,  87,  1-2 (2010),  88--95.



%32
\bibitem{Mol4}
D. I. Moldavanskii,
\textit{Residual Nilpotence of Groups with One Defining Relation},
(Russian) Mat. Zametki, 107, 5 (2020),  752--759.




%34
\bibitem{Mol3}
D. Moldavanskii,
\textit{On the residual properties of Baumslag-Solitar groups},
Communications in Algebra, 46,  no. 9 (2018), 3766--3778.

\bibitem{S}
E. V. Sokolov, \textit{Certain residual properties of generalized Baumslag-Solitar groups}, J. Algebra,  582 (2021), 1--25.

%35
%\bibitem{Par}
%L. Paris,
%\textit{Residual p-properties of mapping class groups and surface groups},
%Trans. Amer. Math. Soc., 361, no. 5 (2009),  2487-Ц2507.



\end{thebibliography}
\end{document}